\DeclareFontFamily{OT1}{pzc}{}
\DeclareFontShape{OT1}{pzc}{m}{it}{<-> s * [1.10] pzcmi7t}{}
\DeclareMathAlphabet{\mathpzc}{OT1}{pzc}{m}{it}
\crefname{defin}{Definition}{Definitions}
\crefname{eg}{Example}{Examples}
\crefname{lem}{Lemma}{Lemmas}
\crefname{theo}{Theorem}{Theorems}
\crefname{equation}{}{}
\crefname{enumi}{}{}
\newcommand\N{\mathbb{N}}
\newcommand\Q{\mathbb{Q}}
\newcommand\Z{\mathbb{Z}}
\newcommand\kk{\Bbbk}
\newcommand\one{\mathbbm{1}}
\newcommand\cC{\mathcal{C}}
\newcommand\cI{\mathcal{I}}
\newcommand\cY{\mathcal{Y}}
\newcommand\fh{\mathfrak{h}}
\newcommand{\md}{\textup{-mod}}
\newcommand{\bmd}{\textup{-bimod}}
\newcommand\Heis{\mathpzc{Heis}}                        
\newcommand\Par{\mathpzc{Par}}                          
\newcommand\Peis[1][]{{\Heis_{\uparrow \downarrow}^{#1}}}
\newcommand\Rep{{\mathrm{\underline{Re}p}}}
\newcommand\rHeis{\mathrm{Heis}}
\newcommand\KDelt{\Delta_{\mathrm{Kr}}}   
\newcommand\parcop{\delta}                
\DeclareRobustCommand{\stirling}{\genfrac\{\}{0pt}{}}
\DeclareMathOperator{\End}{End}
\DeclareMathOperator{\Hom}{Hom}
\DeclareMathOperator{\Add}{Add}
\DeclareMathOperator{\Ind}{Ind}
\DeclareMathOperator{\Res}{Res}
\DeclareMathOperator{\Sym}{Sym}
\DeclareMathOperator{\Tr}{Tr}
\DeclareMathOperator{\Kar}{Kar}
\newcommand{\pd}[1]{\filldraw[black] (#1) circle (1.5pt)} 
\newcommand{\braidto}{to[out=up,in=down]}
\tikzset{anchorbase/.style={>=To,baseline={([yshift=-0.5ex]current bounding box.center)}}}
\newtheorem{theo}{Theorem}[section]
\newtheorem{prop}[theo]{Proposition}
\newtheorem{lem}[theo]{Lemma}
\newtheorem{cor}[theo]{Corollary}
\theoremstyle{definition}
\newtheorem{rem}[theo]{Remark}
\numberwithin{equation}{section}
  \newcommand{\acomments}[1]{
    \ \\
    {\color{red}
      \textbf{AS:} #1
    }
    \ \\
    }
  \newcommand{\scomments}[1]{
    \ \\
    {\color{red}
      \textbf{SNL:} #1
    }
    \ \\
    }
  \newcommand{\acomments}[1]{}
  \newcommand{\scomments}[1]{}
  \newcommand{\details}[1]{
      \ \\
      {\color{OliveGreen}
        \textbf{Details:} #1
      }
      \\
  }
  \newcommand{\details}[1]{}
\begin{document}
%

\title{Embedding Deligne's category $\Rep(S_t)$ in the Heisenberg category}

\author[Samuel Nybobe Likeng and Alistair Savage]{Samuel Nyobe Likeng and Alistair Savage \\ (appendix with Christopher Ryba)}
\address[S.N.L]{
  Department of Mathematics and Statistics \\
  University of Ottawa \\
  Ottawa, ON K1N 6N5, Canada
}
\email{snyob030@uottawa.ca}

\address[A.S.]{
  Department of Mathematics and Statistics \\
  University of Ottawa \\
  Ottawa, ON K1N 6N5, Canada
}
\urladdr{\href{https://alistairsavage.ca}{alistairsavage.ca}, \textrm{\textit{ORCiD}:} \href{https://orcid.org/0000-0002-2859-0239}{orcid.org/0000-0002-2859-0239}}
\email{alistair.savage@uottawa.ca}

\address[C.R.]{
  Department of Mathematics \\
  Massachusetts Institute of Technology \\
  Cambridge, MA 02139-4307, USA
}
\email{ryba@mit.edu}

\begin{abstract}
  We define a faithful linear monoidal functor from the partition category, and hence from Deligne's category $\mathrm{\underline{Re}p}(S_t)$, to the additive Karoubi envelope of the Heisenberg category.   We show that the induced map on Grothendieck rings is injective and corresponds to the Kronecker coproduct on symmetric functions.
\end{abstract}

\subjclass[2020]{Primary 18M05; Secondary 20C30, 17B10}

\keywords{Categorification, partition category, Deligne category, Heisenberg category, symmetric group, partition algebra, monoidal category}

\ifboolexpr{togl{comments} or togl{details}}{%
  {\color{magenta}DETAILS OR COMMENTS ON}
}{%
}

\maketitle
\thispagestyle{empty}

\section{Introduction\label{intro}}

In \cite{Del07}, Deligne introduced a linear monoidal category $\Rep(S_t)$ that interpolates between the categories of representations of the symmetric groups.  In particular, when $t$ is a nonnegative integer $n$, the category of representations of $S_n$ is equivalent to the quotient of $\Rep(S_n)$ by the tensor ideal of negligible morphisms.  One particularly efficient construction of $\Rep(S_t)$ is as the additive Karoubi envelope of the \emph{partition category} $\Par(t)$. The endomorphism algebras of the partition category are the \emph{partition algebras} first introduced by Martin (\cite{Mar94}) and later, independently, by Jones (\cite{Jon94}) as a generalization of the Temperley--Lieb algebra and the Potts model in statistical mechanics.  The partition algebras are in duality with the action of the symmetric group on tensor powers of its permutation representation; that is, the partition algebras generate the commutant of this action  (see \cite[Th.~3.6]{HR05} and \cite[Th.~8.3.13]{CST10}).

In \cite{Kho14}, Khovanov defined another linear monoidal category, the \emph{Heisenberg category} $\Heis$, which is also motivated by the representation theory of the symmetric groups.  In particular, $\Heis$ acts on $\bigoplus_{n \ge 0} S_n\md$, where its two generating objects act by induction $S_n\md \to S_{n+1}\md$ and restriction $S_{n+1}\md \to S_n\md$.  Morphisms in $\Heis$ act by natural transformations between compositions of induction and restriction functors.

Deligne's category $\Rep(S_t)$ can be thought of as describing the representation theory of $S_n$ for arbitrary $n$ in a uniform way, but with $n$ fixed (and not necessarily a nonnegative integer).  On the other hand, the Heisenberg category goes further, allowing $n$ to vary and describing the representation theory of all the symmetric groups at once.  Thus, it is natural to expect a precise relationship between the two categories, with the Heisenberg category being larger.  The goal of the current paper is to describe such a relationship.

For the purposes of this introduction, we describe our results in the case where $t$ is generic. Our first main result (\cref{functordef,faithful,bike}) is the construction of a faithful strict linear monoidal functor
\[
  \Psi_t \colon \Par(t) \to \Heis.
\]
This functor sends $t$ to the clockwise bubble in $\Heis$ and is compatible with the actions of $\Par(t)$ and $\Heis$ on categories of modules for symmetric groups (\cref{actcom}).  Since Deligne's category $\Rep(S_t)$ is the additive Karoubi envelope of the partition category, we have an induced faithful linear monoidal functor
\[
  \Psi_t \colon \Rep(S_t) \to \Kar(\Heis),
\]
where $\Kar(\Heis)$ denotes the additive Karoubi envelope of the Heisenberg category $\Heis$.

The Grothendieck ring of $\Rep(S_t)$ is isomorphic to the ring $\Sym$ of symmetric functions.  On the other hand, the Grothendieck ring of $\Heis$ is isomorphic to a central reduction $\rHeis$ of the universal enveloping algebra of the Heisenberg Lie algebra.  This was conjectured by Khovanov in \cite[Conj.~1]{Kho14} and recently proved in \cite[Th.~1.1]{BSW18}.  We thus have an induced map
\[
  [\Psi_t] \colon \Sym \cong K_0(\Rep(S_t)) \to K_0(\Heis) \cong \rHeis.
\]
Our second main result (\cref{finally}) is that this map is injective and is given by the Kronecker coproduct on $\Sym$.  We also describe the map induced by $\Psi_t$ on the traces (or zeroth Hochschild homologies) of $\Rep(S_t)$ and $\Heis$.

The partition algebras contain many so-called \emph{diagram algebras} that have been well-studied in the literature.  These include the Brauer algebras, Temperley--Lieb algebras, rook algebras, planar partition algebras, planar rook algebras, rook-Brauer algebras, and Motzkin algebras.  As a result, the functor $\Psi_t$ also yields explicit embeddings of these algebras into endomorphism rings in the Heisenberg category, and of their associated categories into the Heisenberg category.

We expect that the results of this paper are the starting point of a large number of precise connections between various algebras and categories that are well-studied in the literature.  We list here some such possible extensions of the current work:
\begin{enumerate}
  \item Replacing the role of the symmetric group with wreath product algebras, one should be able to define an embedding, analogous to $\Psi_t$, relating the $G$-colored partition algebras of \cite{Blo03}, the wreath Deligne categories of \cite{Mor12,Kno07}, and the Frobenius Heisenberg categories of \cite{RS17,Sav18Frob}.

  \item Quantum versions of $\Psi_t$ should exist relating the $q$-partition algebras of \cite{HT10}, a quantum analogue of Deligne's category, and the quantum Heisenberg category of \cite{LS13,BSW18quant}.

  \item Replacing the role of the symmetric group by more general degenerate cyclotomic Hecke algebras should relate the categories of \cite[\S5.1]{Eti14} to the higher central charge Heisenberg categories of \cite{MS18,Bru18}.
\end{enumerate}

The organization of this paper is as follows.  In \cref{sec:partition} we recall the definition of the partition category and Deligne's category $\Rep(S_t)$.  We then recall the Heisenberg category in \cref{sec:Heis}.  We define the functor $\Psi_t$ in \cref{sec:functor}.  In \cref{sec:faithful} we show that $\Psi_t$ intertwines the natural categorical actions on categories on modules of symmetric groups and that it is faithful when $\kk$ is an integral domain of characteristic zero.  Finally, in \cref{sec:Groth} we discuss the induced map on Grothendieck rings and traces.  In \cref{appendix}, we show that $\Psi_t$ is faithful when $\kk$ is any commutative ring.

\subsection*{Notation}

Throughout, we work over a ground ring $\kk$, which is an arbitrary commutative ring unless otherwise specified.  We let $\N$ denote the additive monoid of nonnegative integers.

\subsection*{Acknowledgements}

This research of A.~Savage was supported by Discovery Grant RGPIN-2017-03854 from the Natural Sciences and Engineering Research Council of Canada.  S.~Nyobe Likeng was also supported by this Discovery Grant.  The authors would like to thank Georgia Benkart, Victor Ostrik, Michael Reeks, and Ben Webster for useful conversations, Jon Brundan for helpful comments on an earlier draft of the paper, and Christopher Ryba for suggesting the proof given in \cref{appendix}.

\section{The partition category and Deligne's category $\Rep(S_t)$\label{sec:partition}}

In this section we recall the definition and some important facts about one of our main objects of study.  We refer the reader to \cite{Lau12,Sav18} for a brief treatment of the language of string diagrams and strict linear monoidal categories suited to the current work.  For a morphism $X$ in a category, we will denote the identity morphism on $X$ by $1_X$.

For $m,\ell \in \N$, a \emph{partition} of type $\binom{\ell}{m}$ is a partition of the set $\{1,\dotsc,m,1',\dotsc,\ell'\}$.  The elements of the partition will be called \emph{blocks}.  We depict such a partition as a graph with $\ell$ vertices in the top row, labelled $1',\dotsc,\ell'$ from \emph{right to left}, and $m$ vertices in the bottom row, labelled $1,\dotsc,m$ from \emph{right to left}.  (We choose the right-to-left numbering convention to better match with the Heisenberg category later.)  We draw edges so that the blocks are the connected components of the graph.  For example, the partition $\big\{ \{1,5\}, \{2\}, \{3,1'\}, \{4,4',7'\}, \{2', 3'\}, \{5'\}, \{6'\} \big\}$ of type $\binom{7}{5}$ is depicted as follows:
\[
  \begin{tikzpicture}[anchorbase]
    \pd{0.5,0} node[anchor=north] {$5$};
    \pd{1,0} node[anchor=north] {$4$};
    \pd{1.5,0} node[anchor=north] {$3$};
    \pd{2,0} node[anchor=north] {$2$};
    \pd{2.5,0} node[anchor=north] {$1$};
    \pd{0,1} node[anchor=south] {$7'$};
    \pd{0.5,1} node[anchor=south] {$6'$};
    \pd{1,1} node[anchor=south] {$5'$};
    \pd{1.5,1} node[anchor=south] {$4'$};
    \pd{2,1} node[anchor=south] {$3'$};
    \pd{2.5,1} node[anchor=south] {$2'$};
    \pd{3,1} node[anchor=south] {$1'$};
    \draw (1,0) \braidto (0,1);
    \draw (1,0) \braidto (1.5,1);
    \draw (0.5,0) to[out=up,in=up] (2.5,0);
    \draw (1.5,0) \braidto (3,1);
    \draw (2,1) to (2.5,1);
  \end{tikzpicture}
\]
Note that different graphs can correspond to the same partition since only the connected components of the graph are relevant.

From now on, we will omit the labels of the vertices when drawing partition diagrams.  We write $D \colon m \to \ell$ to indicate that $D$ is a partition of type $\binom{\ell}{m}$.  We denote the unique partition diagrams of types $\binom{1}{0}$ and $\binom{0}{1}$ by
\[
  \begin{tikzpicture}[anchorbase]
    \pd{0,0.5};
    \draw (0,0.25) to (0,0.5);
  \end{tikzpicture}
  \ \colon 0 \to 1
  \qquad \text{and} \qquad
  \begin{tikzpicture}[anchorbase]
    \pd{0,0};
    \draw (0,0) to (0,0.25);
  \end{tikzpicture}
  \ \colon 1 \to 0.
\]

Given two partitions $D' \colon m \to \ell$, $D \colon \ell \to k$, one can stack $D$ on top of $D'$ to obtain a diagram $\begin{matrix} D \\ D' \end{matrix}$ with three rows of vertices.  We let $\alpha(D, D')$ denote the number of components containing only vertices in the middle row of $\begin{matrix} D \\ D' \end{matrix}$.  Let $D \star D'$ be the partition of type $\binom{k}{m}$ with the following property: vertices are in the same block of $D \star D'$ if and only if the corresponding vertices in the top and bottom rows of $\begin{matrix} D \\ D' \end{matrix}$ are in the same block.

Recall that $\kk$ is a commutative ring and fix $t \in \kk$.  The \emph{partition category} $\Par(t)$ is the strict $\kk$-linear monoidal category whose objects are nonnegative integers and, given two objects $m,\ell$ in $\Par(t)$, the morphisms from $m$ to $\ell$ are $\kk$-linear combinations of partitions of type $\binom{\ell}{m}$.  The vertical composition is given by
\[
  D \circ D'
  =t^{\alpha(D,D')} D \star D'
\]
for composable partition diagrams $D,D'$, and extended by linearity.  The bifunctor $\otimes$ is given on objects by
\[
  \otimes \colon \Par(t) \times \Par(t) \to \Par(t),\quad (m,n)\mapsto m+n.
\]
The tensor product on morphisms is given by horizontal juxtaposition of diagrams, extended by linearity.

For example, if
\[
  D' =
  \begin{tikzpicture}[anchorbase]
    \pd{0,1};
    \pd{0.5,1};
    \pd{1,1};
    \pd{1.5,1};
    \pd{2,1};
    \pd{2.5,1};
    \pd{3,1};
    \pd{0.5,0};
    \pd{1,0};
    \pd{1.5,0};
    \pd{2,0};
    \pd{2.5,0};
    \draw (1,0) \braidto (0,1);
    \draw (1,0) \braidto (1.5,1);
    \draw (0.5,0) to[out=up,in=up] (2.5,0);
    \draw (1.5,0) \braidto (3,1);
    \draw (2,1) to (2.5,1);
  \end{tikzpicture}
  \qquad \text{and}  \qquad
  D =
  \begin{tikzpicture}[anchorbase]
    \pd{0,1};
    \pd{0.5,1};
    \pd{1,1};
    \pd{1.5,1};
    \pd{2,1};
    \pd{-0.5,0};
    \pd{0,0};
    \pd{0.5,0};
    \pd{1,0};
    \pd{1.5,0};
    \pd{2,0};
    \pd{2.5,0};
    \draw (-0.5,0) \braidto (0,1);
    \draw (0.5,1) to[out=down,in=down] (1.5,1);
    \draw (0,0) to[out=up,in=up] (1,0);
    \draw (2.5,0) \braidto (1,1);
    \draw (2.5,0) \braidto (2,1);
  \end{tikzpicture}
\]
then
\[
  \begin{matrix}
    D \\ D'
  \end{matrix}
  \  =  \
  \begin{tikzpicture}[anchorbase]
    \pd{0,1};
    \pd{0.5,1};
    \pd{1,1};
    \pd{1.5,1};
    \pd{2,1};
    \pd{2.5,1};
    \pd{3,1};
    \pd{0.5,0};
    \pd{1,0};
    \pd{1.5,0};
    \pd{2,0};
    \pd{2.5,0};
    \draw (1,0) \braidto (0,1);
    \draw (1,0) \braidto (1.5,1);
    \draw (0.5,0) to[out=up,in=up] (2.5,0);
    \draw (1.5,0) \braidto (3,1);
    \draw (2,1) to (2.5,1);
    \pd{0.5,2};
    \pd{1,2};
    \pd{1.5,2};
    \pd{2,2};
    \pd{2.5,2};
    \draw (0,1) \braidto (0.5,2);
    \draw (1,2) to[out=down,in=down] (2,2);
    \draw (0.5,1) to[out=up,in=up] (1.5,1);
    \draw (3,1) \braidto (1.5,2);
    \draw (3,1) \braidto (2.5,2);
  \end{tikzpicture}
  \ ,\quad
  D \star D' =
  \begin{tikzpicture}[anchorbase]
    \pd{0,0};
    \pd{0.5,0};
    \pd{1,0};
    \pd{1.5,0};
    \pd{2,0};
    \pd{0,1};
    \pd{0.5,1};
    \pd{1,1};
    \pd{1.5,1};
    \pd{2,1};
    \draw (0,0) to[out=up,in=up] (2,0);
    \draw (0.5,0) \braidto (0,1);
    \draw (0.5,1) to[out=down,in=down] (1.5,1);
    \draw (1,0) to (1,1);
    \draw (1,0) \braidto (2,1);
  \end{tikzpicture}
  \ ,\quad \text{and} \quad
  D \circ D' = t^2\
  \begin{tikzpicture}[anchorbase]
    \pd{0,0};
    \pd{0.5,0};
    \pd{1,0};
    \pd{1.5,0};
    \pd{2,0};
    \pd{0,1};
    \pd{0.5,1};
    \pd{1,1};
    \pd{1.5,1};
    \pd{2,1};
    \draw (0,0) to[out=up,in=up] (2,0);
    \draw (0.5,0) \braidto (0,1);
    \draw (0.5,1) to[out=down,in=down] (1.5,1);
    \draw (1,0) to (1,1);
    \draw (1,0) \braidto (2,1);
  \end{tikzpicture}
  \ .
\]
The partition category is denoted $\mathrm{Rep}_0(S_t)$ in \cite{Del07} and $\Rep_0(S_t; \kk)$ in \cite{CO11}.

For a linear monoidal category $\cC$, we let $\Kar(\cC)$ denote its additive Karoubi envelope, that is, the idempotent completion of its additive envelope $\Add(\cC)$.  Then $\Kar(\cC)$ is again naturally a linear monoidal category.  \emph{Deligne's category} $\Rep(S_t)$ is the additive Karoubi envelope of $\Par(t)$.  (See \cite[\S8]{Del07} and \cite[\S2.2]{CO11}.)

The following proposition gives a presentation of the partition category.

\begin{prop} \label{Ppresent}
  As a $\kk$-linear monoidal category, the \emph{partition category} $\Par(t)$ is generated by the object $1$ and the morphisms
  \[
    \mu =
    \begin{tikzpicture}[anchorbase]
      \pd{0,0};
      \pd{0.5,0};
      \pd{0.25,0.5};
      \draw (0,0) \braidto (0.25,0.5);
      \draw (0.5,0) \braidto (0.25,0.5);
    \end{tikzpicture}
    \colon 2 \to 1,\quad
    \parcop =
    \begin{tikzpicture}[anchorbase]
      \pd{0,0.5};
      \pd{0.5,0.5};
      \pd{0.25,0};
      \draw (0.25,0) \braidto (0,0.5);
      \draw (0.25,0) \braidto (0.5,0.5);
    \end{tikzpicture}
    \colon 1 \to 2,\quad
    s =
    \begin{tikzpicture}[anchorbase]
      \pd{0,0};
      \pd{0.5,0};
      \pd{0,0.5};
      \pd{0.5,0.5};
      \draw (0,0) \braidto (0.5,0.5);
      \draw (0.5,0) \braidto (0,0.5);
    \end{tikzpicture}
    \colon 2 \to 2,\quad
    \eta =
    \begin{tikzpicture}[anchorbase]
      \pd{0,0.5};
      \draw(0,0.25) to (0,0.5);
    \end{tikzpicture}
    \ \colon 0 \to 1,\quad
    \varepsilon =
    \begin{tikzpicture}[anchorbase]
      \pd{0,0};
      \draw (0,0.25) to (0,0);
    \end{tikzpicture}
    \ \colon 1 \to 0,
  \]
  subject to the following relations:
  \begin{gather} \label{P1}
    \begin{tikzpicture}[anchorbase]
      \pd{0,0};
      \pd{0,0.5};
      \pd{0.25,1};
      \pd{0.5,0.5};
      \draw (0,0) to (0,0.5) \braidto (0.25,1);
      \draw (0.5,0.25) to (0.5,0.5) \braidto (0.25,1);
    \end{tikzpicture}
    \ =\
    \begin{tikzpicture}[anchorbase]
      \pd{0,0};
      \pd{0,0.5};
      \draw (0,0) to (0,0.5);
    \end{tikzpicture}
    \ =\
    \begin{tikzpicture}[anchorbase]
      \pd{0.5,0};
      \pd{0,0.5};
      \pd{0.5,0.5};
      \pd{0.25,1};
      \draw (0.5,0) to (0.5,0.5) \braidto (0.25,1);
      \draw (0,0.25) to (0,0.5) \braidto (0.25,1);
    \end{tikzpicture}
    \ ,\qquad
    \begin{tikzpicture}[anchorbase]
      \pd{0.25,0};
      \pd{0,0.5};
      \pd{0.5,0.5};
      \pd{0,1};
      \draw (0.25,0) \braidto (0,0.5) to (0,1);
      \draw (0.25,0) \braidto (0.5,0.5) to (0.5,0.75);
    \end{tikzpicture}
    \ =\
    \begin{tikzpicture}[anchorbase]
      \pd{0,0};
      \pd{0,0.5};
      \draw (0,0) to (0,0.5);
    \end{tikzpicture}
    \ =\
    \begin{tikzpicture}[anchorbase]
      \pd{0.25,0};
      \pd{0,0.5};
      \pd{0.5,0.5};
      \pd{0.5,1};
      \draw (0.25,0) \braidto (0.5,0.5) to (0.5,1);
      \draw (0.25,0) \braidto (0,0.5) to (0,0.75);
    \end{tikzpicture}
    \ ,\qquad
    \begin{tikzpicture}[anchorbase]
      \pd{0,0};
      \pd{0.5,0};
      \pd{0,0.5};
      \pd{0.5,0.5};
      \pd{1,0.5};
      \pd{0.5,1};
      \pd{1,1};
      \draw (0,0) to (0,0.5) \braidto (0.5,1);
      \draw (0.5,0) to (0.5,1);
      \draw (0.5,0) \braidto (1,0.5) to (1,1);
    \end{tikzpicture}
    \ =\
    \begin{tikzpicture}[anchorbase]
      \pd{0,0};
      \pd{0.5,0};
      \pd{0.25,0.5};
      \pd{0,1};
      \pd{0.5,1};
      \draw (0,0) \braidto (0.25,0.5) \braidto (0,1);
      \draw (0.5,0) \braidto (0.25,0.5) \braidto (0.5,1);
    \end{tikzpicture}
    \ =\
    \begin{tikzpicture}[anchorbase]
      \pd{0.5,0};
      \pd{1,0};
      \pd{0,0.5};
      \pd{0.5,0.5};
      \pd{1,0.5};
      \pd{0,1};
      \pd{0.5,1};
      \draw (0.5,0) \braidto (0,0.5) to (0,1);
      \draw (0.5,0) to (0.5,1);
      \draw (1,0) to (1,0.5) \braidto (0.5,1);
    \end{tikzpicture}
    \ ,
    \\ \label{P2}
    \begin{tikzpicture}[anchorbase]
      \pd{0,0};
      \pd{0.5,0};
      \pd{0,0.5};
      \pd{0.5,0.5};
      \pd{0,1};
      \pd{0.5,1};
      \draw (0,0) \braidto (0.5,0.5) \braidto (0,1);
      \draw (0.5,0) \braidto (0,0.5) \braidto (0.5,1);
    \end{tikzpicture}
    \ =\
    \begin{tikzpicture}[anchorbase]
      \pd{0,0};
      \pd{0.5,0};
      \pd{0,0.5};
      \pd{0.5,0.5};
      \draw (0,0) to (0,0.5);
      \draw (0.5,0) to (0.5,0.5);
    \end{tikzpicture}
    \ ,\qquad
    \begin{tikzpicture}[anchorbase]
      \pd{0,0};
      \pd{0.5,0};
      \pd{1,0};
      \pd{0,0.5};
      \pd{0.5,0.5};
      \pd{1,0.5};
      \pd{0,1};
      \pd{0.5,1};
      \pd{1,1};
      \pd{0,1.5};
      \pd{0.5,1.5};
      \pd{1,1.5};
      \draw (0,0) to (0,0.5) \braidto (0.5,1) \braidto (1,1.5);
      \draw (0.5,0) \braidto (1,0.5) to (1,1) \braidto (0.5,1.5);
      \draw (1,0) \braidto (0.5,0.5) \braidto (0,1) to (0,1.5);
    \end{tikzpicture}
    \ =\
    \begin{tikzpicture}[anchorbase]
      \pd{0,0};
      \pd{0.5,0};
      \pd{1,0};
      \pd{0,0.5};
      \pd{0.5,0.5};
      \pd{1,0.5};
      \pd{0,1};
      \pd{0.5,1};
      \pd{1,1};
      \pd{0,1.5};
      \pd{0.5,1.5};
      \pd{1,1.5};
      \draw (0,0) \braidto (0.5,0.5) \braidto (1,1) to (1,1.5);
      \draw (0.5,0) \braidto (0,0.5) to (0,1) \braidto (0.5,1.5);
      \draw (1,0) to (1,0.5) \braidto (0.5,1) \braidto (0,1.5);
    \end{tikzpicture}
    \ ,
    \\ \label{P3}
    \begin{tikzpicture}[anchorbase]
      \pd{0,0};
      \pd{0,0.5};
      \pd{0.5,0.5};
      \pd{0,1};
      \pd{0.5,1};
      \draw (0,0) to (0,0.5) \braidto (0.5,1);
      \draw (0.5,0.25) to (0.5,0.5) \braidto (0,1);
    \end{tikzpicture}
    \ =\
    \begin{tikzpicture}[anchorbase]
      \pd{0.5,0};
      \pd{0,0.5};
      \pd{0.5,0.5};
      \draw (0,0.25) to (0,0.5);
      \draw (0.5,0) to (0.5,0.5);
    \end{tikzpicture}
    \ ,\qquad
    \begin{tikzpicture}[anchorbase]
      \pd{0,0};
      \pd{0.5,0};
      \pd{1,0};
      \pd{0,0.5};
      \pd{0.5,0.5};
      \pd{1,0.5};
      \pd{0,1};
      \pd{0.5,1};
      \pd{1,1};
      \pd{0,1.5};
      \pd{0.5,1.5};
      \draw (0,0) to (0,0.5) \braidto (0.5,1) to (0.5,1.5);
      \draw (0.5,0) \braidto (1,0.5) to (1,1) \braidto (0.5,1.5);
      \draw (1,0) \braidto (0.5,0.5) \braidto (0,1) to (0,1.5);
    \end{tikzpicture}
    \ =\
    \begin{tikzpicture}[anchorbase]
      \pd{0,0};
      \pd{0.5,0};
      \pd{1,0};
      \pd{0.5,0.5};
      \pd{1,0.5};
      \pd{0.5,1};
      \pd{1,1};
      \draw (0,0) \braidto (0.5,0.5) \braidto (1,1);
      \draw (0.5,0) to (0.5,0.5);
      \draw (1,0) to (1,0.5) \braidto (0.5,1);
    \end{tikzpicture}
    \ ,\qquad
    \begin{tikzpicture}[anchorbase]
      \pd{0,0};
      \pd{0.5,0};
      \pd{0,0.5};
      \pd{0.5,0.5};
      \pd{0,1};
      \draw (0,0) \braidto (0.5,0.5) to (0.5,0.75);
      \draw (0.5,0) \braidto (0,0.5) to (0,1);
    \end{tikzpicture}
    \ =\
    \begin{tikzpicture}[anchorbase]
      \pd{0,0};
      \pd{0.5,0};
      \pd{0.5,0.5};
      \draw (0,0) to (0,0.25);
      \draw (0.5,0) to (0.5,0.5);
    \end{tikzpicture}
    \ ,\qquad
    \begin{tikzpicture}[anchorbase]
      \pd{0,0};
      \pd{0.5,0};
      \pd{0,0.5};
      \pd{0.5,0.5};
      \pd{1,0.5};
      \pd{0,1};
      \pd{0.5,1};
      \pd{1,1};
      \pd{0,1.5};
      \pd{0.5,1.5};
      \pd{1,1.5};
      \draw (0,0) to (0,0.5) \braidto (0.5,1) \braidto (1,1.5);
      \draw (0.5,0) to (0.5,0.5) \braidto (0,1) to (0,1.5);
      \draw (0.5,0) \braidto (1,0.5) to (1,1) \braidto (0.5,1.5);
    \end{tikzpicture}
    \ =\
    \begin{tikzpicture}[anchorbase]
      \pd{0.5,0};
      \pd{1,0};
      \pd{0.5,0.5};
      \pd{1,0.5};
      \pd{0,1};
      \pd{0.5,1};
      \pd{1,1};
      \draw (0.5,0) \braidto (1,0.5) to (1,1);
      \draw (1,0) \braidto (0.5,0.5) \braidto (0,1);
      \draw (0.5,0.5) to (0.5,1);
    \end{tikzpicture}
    \ ,
    \\ \label{P4}
    \begin{tikzpicture}[anchorbase]
      \pd{0,0};
      \pd{0.5,0};
      \pd{0,0.5};
      \pd{0.5,0.5};
      \pd{0.25,1};
      \draw (0,0) \braidto (0.5,0.5) \braidto (0.25,1);
      \draw (0.5,0) \braidto (0,0.5) \braidto (0.25,1);
    \end{tikzpicture}
    \ =\
    \begin{tikzpicture}[anchorbase]
      \pd{0,0};
      \pd{0.5,0};
      \pd{0.25,0.5};
      \draw (0,0) \braidto (0.25,0.5);
      \draw (0.5,0) \braidto (0.25,0.5);
    \end{tikzpicture}
    \ ,\qquad
    \begin{tikzpicture}[anchorbase]
      \pd{0.25,0};
      \pd{0,0.5};
      \pd{0.5,0.5};
      \pd{0.25,1};
      \draw (0.25,0) \braidto (0,0.5) \braidto (0.25,1);
      \draw (0.25,0) \braidto (0.5,0.5) \braidto (0.25,1);
    \end{tikzpicture}
    \ = \
    \begin{tikzpicture}[anchorbase]
      \pd{0,0};
      \pd{0,0.5};
      \draw (0,0) to (0,0.5);
    \end{tikzpicture}
    \ ,\qquad
    \begin{tikzpicture}[anchorbase]
      \pd{0,0};
      \draw (0,-0.25) to (0,0.25);
    \end{tikzpicture}
    = t 1_0.
  \end{gather}
\end{prop}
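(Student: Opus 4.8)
I would prove the proposition by exhibiting a strict $\kk$-linear monoidal functor from the abstractly presented category onto $\Par(t)$ and showing it is an isomorphism on morphism spaces. Write $\widetilde{\Par}(t)$ for the strict $\kk$-linear monoidal category with one generating object and generating morphisms $\mu,\parcop,s,\eta,\varepsilon$ subject to \eqref{P1}--\eqref{P4}. Since the monoid of objects is the free monoid on one generator on both sides, there is at most one strict $\kk$-linear monoidal functor $\Phi\colon\widetilde{\Par}(t)\to\Par(t)$ sending the generating object to $1$ and $\mu,\parcop,s,\eta,\varepsilon$ to the displayed partition diagrams; it exists as soon as one checks that these diagrams satisfy \eqref{P1}--\eqref{P4} in $\Par(t)$. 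The first task, then, is to verify the relations directly from the rule $D\circ D'=t^{\alpha(D,D')}D\star D'$. Each relation becomes an identity of set partitions, and a scalar appears only when a component lies entirely in a middle row; among \eqref{P1}--\eqref{P4} this occurs exactly for the closed loop $\varepsilon\circ\eta$, producing the factor $t$ in the last relation of \eqref{P4}. This step is routine. Because $\Phi$ is the identity on objects, the proposition reduces to showing that $\Phi$ is full and faithful.

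\textbf{Fullness.} I would show every partition diagram lies in the image of $\Phi$, treating a partition $D\colon m\to\ell$ one block at a time. A block meeting $a$ bottom vertices and $b$ top vertices (with $a+b\ge 1$) is obtained as a ``fan-in'' of $a-1$ copies of $\mu$ down to a single strand, or a single $\eta$ if $a=0$, followed by a ``fan-out'' of $b-1$ copies of $\parcop$, or a single $\varepsilon$ if $b=0$. Tensoring these block-diagrams together and pre- and post-composing with permutation diagrams---each realized as a composite of terms $1^{\otimes i}\otimes s\otimes 1^{\otimes j}$---routes the legs to their prescribed positions. Hence every partition diagram is a composite of generators, so $\Phi$ is full.

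\textbf{Faithfulness.} This is the crux. By definition $\Hom_{\Par(t)}(m,\ell)$ is free on the set of partitions of type $\binom{\ell}{m}$; call this basis $\{P\}$. Since $\Phi$ is full, it suffices to produce in $\widetilde{\Par}(t)$ a spanning set $\{n_P\}$ of $\Hom_{\widetilde{\Par}(t)}(m,\ell)$, indexed by the same partitions, with $\Phi(n_P)=P$. Indeed, any relation $\sum c_P n_P=0$ then maps to $\sum c_P P=0$, forcing all $c_P=0$; so $\{n_P\}$ is a basis and $\Phi$ carries a basis to a basis, hence is an isomorphism. To produce such normal forms I would: use the naturality relations \eqref{P3} to slide every $\eta$ and $\mu$ below, and every $\varepsilon$ and $\parcop$ above, all crossings; use the fact that \eqref{P1} and \eqref{P4} make $1$ a commutative special Frobenius object---so merges and splits are associative, coassociative, commutative, and cocommutative---to coalesce the fan-in/fan-out trees attached to each block into a single canonical shape; and use the symmetric-group relations \eqref{P2} to replace the intermediate permutation by a fixed coset representative. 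The result is that every generator-word equals a scalar (a power of $t$, from loops removed via the special and bubble relations) times a canonical diagram depending only on the resulting set partition of $\{1,\dots,m,1',\dots,\ell'\}$.

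\textbf{Main obstacle.} The difficulty is concentrated in the faithfulness step: proving that \eqref{P1}--\eqref{P4} genuinely suffice to bring an arbitrary composite into the set-partition-indexed normal form, that is, that the rewriting always terminates at a normal form. The counting mechanism above is what lets me avoid a full confluence analysis, since injectivity on normal forms is handed to me for free by fullness of $\Phi$ together with freeness of the target; only the \emph{spanning} assertion---every word reduces to \emph{some} $n_P$---must be established by hand. The delicate points in that reduction are matching the $t$-weighting of composition in $\Par(t)$ with the scalars emitted by the special relation $\mu\circ\parcop=1_1$ and the bubble relation, and handling blocks with no bottom legs or no top legs uniformly with the generic case.
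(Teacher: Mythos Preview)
Your approach is correct and would yield a complete proof once the spanning step is carried out in full; it is, however, a genuinely different route from the paper's. The paper does not argue directly: it observes that $\Par(t)$ is isomorphic to the quotient of the $\kk$-linearization of (a skeleton of) the category $2\mathpzc{Cob}$ of $2$-dimensional cobordisms by the second and third relations in \eqref{P4}, and then invokes the known presentation of $2\mathpzc{Cob}$ recorded in Kock's book, which supplies exactly \eqref{P1}--\eqref{P3} together with the first relation of \eqref{P4}. Your direct normal-form strategy---fullness plus a spanning set mapping to the partition-diagram basis, thereby sidestepping any confluence analysis---is sound, and the rewriting you outline (slide $\eta,\mu,\varepsilon,\parcop$ past crossings via \eqref{P3}, canonicalize merge/split trees via the commutative special Frobenius axioms, normalize the permutation layers via \eqref{P2}) is precisely how the $2\mathpzc{Cob}$ presentation itself is established. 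So in effect you would be reproving that result along the way: more self-contained than the paper, but considerably longer than citing it. The $t$-bookkeeping you flag is not a real obstacle: closed middle-row components in a stacked diagram correspond exactly to applications of $\varepsilon\circ\eta=t\,1_0$ after using the special relation $\mu\circ\parcop=1_1$ to contract each such component to a single loop, so the scalars match on the nose.
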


In fact, one only needs one of the equalities in the first string of equalities in \cref{P1}.  The other then follows using the first relation in \cref{P4} and the first relation in \cref{P3}.  The reader who prefers a more traditional algebraic formulation of the above presentation of $\Par(t)$ can find this in \cite[Th.~2.1]{Com16}.

\begin{proof}
  This result is proved in \cite[Th.~2.1]{Com16}.  While it is assumed throughout \cite{Com16} that $\kk$ is a field of characteristic not equal to $2$, these restrictions are not needed in the proof of \cite[Th.~2.1]{Com16}.  The essence of the proof is noting that $\Par(t)$ is isomorphic to the category obtained from the $\kk$-linearization of a skeleton of the category $2\mathpzc{Cob}$ of 2-dimensional cobordisms by factoring out by the second and third relations in \cref{P4}.  Then the result is deduced from the presentation of $2\mathpzc{Cob}$ described in \cite[\S 1.4]{Koc04}.
\end{proof}

The relations \cref{P1} are equivalent to the statement that $(1,\mu,\eta,\parcop,\varepsilon)$ is a Frobenius object (see, for example, \cite[Prop.~2.3.24]{Koc04}).  Relations \cref{P2,P3} are precisely the statement that $s$ equips $\Par(t)$ with the structure of a symmetric monoidal category (see, for example, \cite[\S1.3.27, \S1.4.35]{Koc04}).  Then the relations \cref{P4} are precisely the statements that the Frobenius object $1$ is commutative, special, and of dimension $t$, respectively.  Thus, \cref{Ppresent} states that $\Par(t)$ is the free $\kk$-linear symmetric monoidal category generated by a $t$-dimensional special commutative Frobenius object.

The endomorphism algebra $P_k(t) := \End_{\Par(t)}(k)$ is called the \emph{partition algebra}.  We have a natural algebra homomorphism
\begin{equation} \label{garage}
  \kk S_k \to P_k(t),
\end{equation}
mapping $\tau \in S_k$ to the partition with blocks $\{i,\tau(i)'\}$, $1 \le i \le k$.

Let $V = \kk^n$ be the permutation representation of $S_n$ and let $\mathbf{1}_n$ denote the one-dimensional trivial $S_n$-module.  As explained in \cite[\S2.4]{Com16}, there is a strong monoidal functor
\begin{equation} \label{Phidef}
  \Phi_n \colon \Par(n) \to S_n\md
\end{equation}
defined on generators by setting $\Phi_n(1) = V$ and
\begin{align*}
  \Phi_n(\mu) &\colon V \otimes V \to V,& v_i\otimes v_j &\mapsto \delta_{i,j}v_i, \\
  \Phi_n(\eta) &\colon \mathbf{1}_n \to V,& 1 &\mapsto \textstyle \sum_{i=1}^{n} v_i, \\
  \Phi_n(\parcop) &\colon V \to V \otimes V,& v_i &\mapsto v_i \otimes v_i, \\
  \Phi_n(\varepsilon) &\colon V \to \mathbf{1}_n,& v_i &\mapsto 1, \\
  \Phi_n(s) &\colon V \otimes V \to V \otimes V,& v_i\otimes v_j &\mapsto v_j\otimes v_i.
\end{align*}
The proposition below is a generalization of the duality property of the partition algebra mentioned in the introduction.

\begin{prop} \label{bee}
  \begin{enumerate}
    \item The functor $\Phi_n$ is full.
    \item The induced map
      \[
        \Hom_{\Par(n)}(k,\ell)\to\Hom_{S_n}(V^{\otimes k}, V^{\otimes \ell})
      \]
      is an isomorphism if and only if $k + \ell \leq n$.
  \end{enumerate}
\end{prop}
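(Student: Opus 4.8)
The plan is to reduce both parts to one explicit description of $\Phi_n$ on diagrams, after which everything follows formally from Möbius inversion on the lattice of set partitions. Write $\Lambda=\{1,\dots,k,1',\dots,\ell'\}$ and let $\Pi$ be the lattice of set partitions of $\Lambda$ ordered by refinement (so the one-block partition is the top element). Then $\Hom_{\Par(n)}(k,\ell)$ is free over $\kk$ with basis $\{D_\rho:\rho\in\Pi\}$, where $D_\rho$ is the diagram whose underlying partition is $\rho$. Given $\mathbf i\in\{1,\dots,n\}^k$ and $\mathbf j\in\{1,\dots,n\}^\ell$, view $(\mathbf i,\mathbf j)$ as a function $\Lambda\to\{1,\dots,n\}$ (vertex $a$ gets $i_a$, vertex $b'$ gets $j_b$) and let $\ker(\mathbf i,\mathbf j)\in\Pi$ record which vertices receive equal values.

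First I would determine $\Phi_n$ on an arbitrary diagram. Starting from the values of $\Phi_n$ on $\mu,\parcop,\eta,\varepsilon,s$ and decomposing $D_\rho$ into these generators, one checks that
\[
  \Phi_n(D_\rho)\bigl(v_{i_1}\otimes\cdots\otimes v_{i_k}\bigr)=\sum_{\mathbf j} v_{j_1}\otimes\cdots\otimes v_{j_\ell},
\]
the sum being over those $\mathbf j$ for which $(\mathbf i,\mathbf j)$ is constant on every block of $\rho$, i.e. for which $\ker(\mathbf i,\mathbf j)\ge\rho$. For $\pi\in\Pi$, let $f_\pi\in\Hom_{S_n}(V^{\otimes k},V^{\otimes\ell})$ be the equivariant map whose matrix entries are the indicator of $\ker=\pi$ (the sum over the single $S_n$-orbit of tuples of kernel $\pi$). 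Because $S_n$ permutes the basis of matrix units, the orbit sums $f_\pi$ with $|\pi|\le n$ form a $\kk$-basis of $\Hom_{S_n}(V^{\otimes k},V^{\otimes\ell})$, and $f_\pi=0$ once $|\pi|>n$. The displayed formula then becomes
\begin{equation}\label{eq:zetarelation}
  \Phi_n(D_\rho)=\sum_{\pi\ge\rho} f_\pi,\qquad \rho\in\Pi.
\end{equation}

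For part (a), I would invert \eqref{eq:zetarelation} using the Möbius function $\mu_\Pi$ of $\Pi$. Since $\mu_\Pi$ is integer-valued, the identity $f_\rho=\sum_{\pi\ge\rho}\mu_\Pi(\rho,\pi)\,\Phi_n(D_\pi)$ holds in $\Hom_{S_n}(V^{\otimes k},V^{\otimes\ell})$ over any commutative ring $\kk$; hence every basis vector $f_\rho$ lies in the image of $\Phi_n$, and $\Phi_n$ is full.

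For part (b), suppose first that $k+\ell\le n$. Then every $\rho\in\Pi$ satisfies $|\rho|\le k+\ell\le n$, so all $f_\pi$ are nonzero and $\{f_\pi:\pi\in\Pi\}$ is a basis; the coefficient matrix in \eqref{eq:zetarelation} is the zeta matrix of $\Pi$, which is unitriangular with respect to any linear extension of $\le$ and therefore invertible over $\kk$. Thus $\Phi_n$ carries the basis $\{D_\rho\}$ to a basis and is an isomorphism, and the argument is characteristic-free. Conversely, if $k+\ell>n$ then the all-singletons partition has more than $n$ blocks, so the number of $\pi\in\Pi$ with $|\pi|\le n$ is strictly less than $|\Pi|=B(k+\ell)$; the free source and target then have distinct finite ranks, and no $\kk$-linear isomorphism between them exists when $\kk\ne 0$. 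This yields the stated equivalence. The main obstacle is the first step, namely establishing \eqref{eq:zetarelation}, and in particular verifying that the factors of $n$ contributed by closed components of a composite diagram match the scalars $n^{\alpha(D,D')}$ built into composition in $\Par(n)$. Once \eqref{eq:zetarelation} is available, both the fullness and the sharp bound $k+\ell\le n$ are purely lattice-theoretic.
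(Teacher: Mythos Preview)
Your argument is correct, and it is essentially the standard proof of this result. The paper does not give an independent proof here: it simply cites \cite[Th.~2.3]{Com16} (and \cite[Th.~3.6]{HR05} for the case $k=\ell$), noting only that the characteristic assumptions made there are unnecessary. Your approach via the orbit-sum basis $\{f_\pi : |\pi|\le n\}$ of $\Hom_{S_n}(V^{\otimes k},V^{\otimes\ell})$, the zeta-relation $\Phi_n(D_\rho)=\sum_{\pi\ge\rho}f_\pi$, and M\"obius inversion on the partition lattice is precisely the argument those references use, so you have reconstructed the cited proof rather than found a different one.

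The one point you flag as an obstacle---checking that the formula is compatible with composition, in particular that isolated middle components contribute a factor of $n$ matching $t^{\alpha(D,D')}$ at $t=n$---is indeed the only nontrivial verification, and it goes through exactly as you indicate: a component confined to the middle row carries an unconstrained index in $\{1,\dots,n\}$, producing the factor $n$. Once that is done, your unitriangularity argument for $k+\ell\le n$ and your rank count for $k+\ell>n$ (using that commutative rings have invariant basis number) are characteristic-free, which is exactly the generality the paper wants.
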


\begin{proof}
  This is proved in \cite[Th.~2.3]{Com16}.  While it is assumed throughout \cite{Com16} that $\kk$ is a field of characteristic not equal to $2$, that assumption is not needed in the proof of \cite[Th.~2.3]{Com16}.  When $k=\ell$, the current proposition reduces to a statement about the partition algebra; see \cite[Th.~3.6]{HR05}.
\end{proof}

\section{The Heisenberg category\label{sec:Heis}}

In this section we define the Heisenberg category originally introduced by Khovanov in \cite{Kho14}.  This is the central charge $-1$ case of a more general Heisenberg category described in \cite{MS18,Bru18}.  We give here the efficient presentation of this category described in \cite[Rem.~1.5(2)]{Bru18}.

The \emph{Heisenberg category} $\Heis$ is the strict $\kk$-linear monoidal category generated by two objects $\uparrow$, $\downarrow$, (we use horizontal juxtaposition to denote the tensor product) and morphisms
\[
  \begin{tikzpicture}[anchorbase]
    \draw[->] (0.6,0) -- (0,0.6);
    \draw[->] (0,0) -- (0.6,0.6);
  \end{tikzpicture}
  \colon \uparrow \uparrow\ \to\ \uparrow \uparrow
  , \quad
  \begin{tikzpicture}[anchorbase]
    \draw[->] (0,.2) -- (0,0) arc (180:360:.3) -- (.6,.2);
  \end{tikzpicture}
  \ \colon \one \to\ \downarrow \uparrow
  , \quad
  \begin{tikzpicture}[anchorbase]
    \draw[->] (0,-.2) -- (0,0) arc (180:0:.3) -- (.6,-.2);
  \end{tikzpicture}
  \ \colon \uparrow \downarrow\ \to \one
  , \quad
  \begin{tikzpicture}[anchorbase]
    \draw[<-] (0,.2) -- (0,0) arc (180:360:.3) -- (.6,.2);
  \end{tikzpicture}
  \ \colon \one \to\ \uparrow \downarrow
  , \quad
  \begin{tikzpicture}[anchorbase]
    \draw[<-] (0,0) -- (0,.2) arc (180:0:.3) -- (.6,0);
  \end{tikzpicture}
  \ \colon \downarrow \uparrow\ \to \one,
\]
where $\one$ denotes the unit object, subject to the relations
\begin{gather} \label{H1}
  \begin{tikzpicture}[anchorbase]
    \draw[->] (0.3,0) \braidto (-0.3,0.6) \braidto (0.3,1.2);
    \draw[->] (-0.3,0) to[out=up,in=down] (0.3,0.6) \braidto (-0.3,1.2);
  \end{tikzpicture}
  \ =\
  \begin{tikzpicture}[anchorbase]
    \draw[->] (-0.2,0) -- (-0.2,1.2);
    \draw[->] (0.2,0) -- (0.2,1.2);
  \end{tikzpicture}
  \ ,\qquad
  \begin{tikzpicture}[anchorbase]
    \draw[->] (0.4,0) -- (-0.4,1.2);
    \draw[->] (0,0) \braidto (-0.4,0.6) \braidto (0,1.2);
    \draw[->] (-0.4,0) -- (0.4,1.2);
  \end{tikzpicture}
  \ =\
  \begin{tikzpicture}[anchorbase]
    \draw[->] (0.4,0) -- (-0.4,1.2);
    \draw[->] (0,0) \braidto (0.4,0.6) \braidto (0,1.2);
    \draw[->] (-0.4,0) -- (0.4,1.2);
  \end{tikzpicture}
  \ ,
  \\ \label{H2}
  \begin{tikzpicture}[anchorbase]
    \draw[->] (0,0) -- (0,0.6) arc(180:0:0.2) -- (0.4,0.4) arc(180:360:0.2) -- (0.8,1);
  \end{tikzpicture}
  \ =\
  \begin{tikzpicture}[anchorbase]
    \draw[->] (0,0) -- (0,1);
  \end{tikzpicture}
  \ ,\qquad
  \begin{tikzpicture}[anchorbase]
    \draw[->] (0,1) -- (0,0.4) arc(180:360:0.2) -- (0.4,0.6) arc(180:0:0.2) -- (0.8,0);
  \end{tikzpicture}
  \ =\
  \begin{tikzpicture}[anchorbase]
    \draw[<-] (0,0) -- (0,1);
  \end{tikzpicture}
  \ ,
  \\ \label{H3}
  \begin{tikzpicture}[anchorbase]
    \draw[->] (-0.3,-0.6) \braidto (0.3,0) \braidto (-0.3,0.6);
    \draw[<-] (0.3,-0.6) \braidto (-0.3,0) \braidto (0.3,0.6);
  \end{tikzpicture}
  \ =\
  \begin{tikzpicture}[anchorbase]
    \draw[->] (-0.2,-0.6) to (-0.2,0.6);
    \draw[<-] (0.2,-0.6) to (0.2,0.6);
  \end{tikzpicture}
  \ ,\qquad
  \begin{tikzpicture}[anchorbase]
    \draw[<-] (-0.3,-0.6) \braidto (0.3,0) \braidto (-0.3,0.6);
    \draw[->] (0.3,-0.6) \braidto (-0.3,0) \braidto (0.3,0.6);
  \end{tikzpicture}
  \ =\
  \begin{tikzpicture}[anchorbase]
    \draw[<-] (-0.2,-0.6) to (-0.2,0.6);
    \draw[->] (0.2,-0.6) to (0.2,0.6);
  \end{tikzpicture}
  -
  \begin{tikzpicture}[anchorbase]
    \draw[<-] (-0.3,0) to (-0.3,0.2) arc(180:0:0.3) to (0.3,0);
    \draw[->] (-0.3,1.2) to (-0.3,1) arc(-180:0:0.3) to (0.3,1.2);
  \end{tikzpicture}
  \ ,\qquad
  \begin{tikzpicture}[anchorbase]
  	\draw[<-] (0,0.6) to (0,0.3);
  	\draw (-0.3,-0.2) to [out=180,in=-90](-.5,0);
  	\draw (-0.5,0) to [out=90,in=180](-.3,0.2);
  	\draw (-0.3,.2) to [out=0,in=90](0,-0.3);
  	\draw (0,-0.3) to (0,-0.6);
  	\draw (0,0.3) to [out=-90,in=0] (-.3,-0.2);
  \end{tikzpicture}
  \ = 0,
  \qquad
  \begin{tikzpicture}[anchorbase]
    \draw[->] (0,0.3) arc(90:450:0.3);
  \end{tikzpicture}
  = 1_\one.
\end{gather}
Here the left and right crossings are defined by
\[
  \begin{tikzpicture}[anchorbase]
    \draw[<-] (0,0) -- (0.6,0.6);
    \draw[->] (0.6,0) -- (0,0.6);
  \end{tikzpicture}
  \ :=\
  \begin{tikzpicture}[anchorbase]
    \draw[->] (-0.2,-0.3) to (0.2,0.3);
    \draw[<-] (-0.6,-0.3) to[out=up,in=135,looseness=2] (0,0) to[out=-45,in=down,looseness=2] (0.6,0.3);
  \end{tikzpicture}
  \ ,\qquad
  \begin{tikzpicture}[anchorbase]
    \draw[->] (0,0) -- (0.6,0.6);
    \draw[<-] (0.6,0) -- (0,0.6);
  \end{tikzpicture}
  \ :=\
  \begin{tikzpicture}[anchorbase]
    \draw[->] (0.2,-0.3) to (-0.2,0.3);
    \draw[<-] (0.6,-0.3) to[out=up,in=45,looseness=2] (0,0) to[out=225,in=down,looseness=2] (-0.6,0.3);
  \end{tikzpicture}
  \ .
\]

The category $\Heis$ is strictly pivotal, meaning that morphisms are invariant under isotopy (see \cite[Th.~1.3(ii),(iii)]{Bru18}).  The relations \cref{H3} imply that
\begin{equation} \label{key}
  \downarrow \uparrow\ \cong\ \uparrow \downarrow \oplus \one.
\end{equation}
In addition, we have the following \emph{bubble slide} relations (see \cite[p.~175]{Kho14}, \cite[(13), (19)]{Bru18}):
\begin{equation} \label{bubslide}
  \begin{tikzpicture}[anchorbase]
    \draw[->] (0,0.3) arc(450:90:0.3);
    \draw[->] (0.6,-0.5) to (0.6,0.5);
  \end{tikzpicture}
  \ =\
  \begin{tikzpicture}[anchorbase]
    \draw[->] (0,0.3) arc(450:90:0.3);
    \draw[->] (-0.6,-0.5) to (-0.6,0.5);
  \end{tikzpicture}
  \ +\
  \begin{tikzpicture}[anchorbase]
    \draw[->] (0,-0.5) to (0,0.5);
  \end{tikzpicture}
  \qquad \text{and} \qquad
  \begin{tikzpicture}[anchorbase]
    \draw[->] (0,0.3) arc(450:90:0.3);
    \draw[<-] (0.6,-0.5) to (0.6,0.5);
  \end{tikzpicture}
  \ =\
  \begin{tikzpicture}[anchorbase]
    \draw[->] (0,0.3) arc(450:90:0.3);
    \draw[<-] (-0.6,-0.5) to (-0.6,0.5);
  \end{tikzpicture}
  \ -\
  \begin{tikzpicture}[anchorbase]
    \draw[<-] (0,-0.5) to (0,0.5);
  \end{tikzpicture}.
\end{equation}
We can define downward crossings
\[
  \begin{tikzpicture}[anchorbase]
    \draw[<-] (0,0) -- (0.6,0.6);
    \draw[<-] (0.6,0) -- (0,0.6);
  \end{tikzpicture}
  \ :=\
  \begin{tikzpicture}[anchorbase]
    \draw[<-] (-0.2,-0.3) to (0.2,0.3);
    \draw[<-] (-0.6,-0.3) to[out=up,in=135,looseness=2] (0,0) to[out=-45,in=down,looseness=2] (0.6,0.3);
  \end{tikzpicture}
\]
and then we have
\begin{equation} \label{genbraid}
  \begin{tikzpicture}[anchorbase]
    \draw (0.4,0) -- (-0.4,1.2);
    \draw (0,0) \braidto (-0.4,0.6) \braidto (0,1.2);
    \draw (-0.4,0) -- (0.4,1.2);
  \end{tikzpicture}
  \ =\
  \begin{tikzpicture}[anchorbase]
    \draw (0.4,0) -- (-0.4,1.2);
    \draw (0,0) \braidto (0.4,0.6) \braidto (0,1.2);
    \draw (-0.4,0) -- (0.4,1.2);
  \end{tikzpicture}
  \quad \text{for all possible orientations of the strands}
\end{equation}
(see \cite[p.~175]{Kho14}, \cite[(20)]{Bru18}).

For $1 \le i \le k-1$, let $s_i \in S_k$ denote the simple transposition of $i$ and $i+1$.  We have natural algebra homomorphisms
\begin{equation} \label{SunnyD}
  \kk S_k \to \End_\Heis(\uparrow^k)
  \quad \text{and} \quad
  \kk S_k \to \End_\Heis(\downarrow^k),
\end{equation}
where $s_i$ is mapped to the crossing of strands $i$ and $i+1$, numbering strands \emph{from right to left}.

Let $\Peis$ denote the full $\kk$-linear monoidal subcategory of $\Heis$ generated by $\uparrow \downarrow$.  It follows immediately from \cref{bubslide} that
\[
  \begin{tikzpicture}[anchorbase]
    \draw[->] (0,0.3) arc(450:90:0.3);
    \draw[->] (0.6,-0.5) to (0.6,0.5);
    \draw[<-] (0.9,-0.5) to (0.9,0.5);
  \end{tikzpicture}
  \ =\
  \begin{tikzpicture}[anchorbase]
    \draw[->] (0,0.3) arc(450:90:0.3);
    \draw[<-] (-0.6,-0.5) to (-0.6,0.5);
    \draw[->] (-0.9,-0.5) to (-0.9,0.5);
  \end{tikzpicture}
  \ .
\]
In other words, the clockwise bubble is strictly central in $\Peis$.  Thus, fixing $t \in \kk$, we can define $\Peis(t)$ to be the quotient of $\Peis$ by the additional relation
\begin{equation} \label{H4}
  \begin{tikzpicture}[anchorbase]
    \draw[->] (0,0.3) arc(450:90:0.3);
  \end{tikzpicture}
  \ = t 1_\one.
\end{equation}

For additive categories $\cC_i$, $i \in I$, the direct product category $\prod_{i \in I} \cC_i$ has objects $(X_i)_{i \in I}$, where $X_i \in \cC_i$.  Morphisms $(X_i)_{i \in I} \to (Y_i)_{i \in I}$ are $(f_i)_{i \in I}$, where $f_i \in \Hom_{\cC_i}(X_i,Y_i)$, with componentwise composition.  The direct sum category $\bigoplus_{i \in I} \cC_i$ is the full subcategory of $\prod_{i \in I} \cC_i$ on objects $(X_i)_{i \in I}$ where all but finitely many of the $X_i$ are the zero object.

We now recall the action of $\Heis$ on the category of $S_n$-modules first defined by Khovanov \cite[\S3.3]{Kho14}.  We begin by defining a strong $\kk$-linear monoidal functor
\[
  \Theta \colon \Heis \to \prod_{m \in \N} \left( \bigoplus_{n \in \N} (S_n,S_m)\bmd \right).
\]
The tensor product structure on the codomain is given by the usual tensor product of bimodules, where we define the tensor product $M \otimes N$ of $M \in (S_n, S_m)\bmd$ and $N \in (S_k,S_\ell)\bmd$ to be zero when $m \ne k$.  We adopt the convention that $S_0$ is the trivial group, so that $S_0\md$ is the category of $\kk$-vector spaces.  For $0 \le m,k \le n$, let ${}_k(n)_m$ denote $\kk S_n$, considered as an $(S_k,S_m)$-bimodule.  We will omit the subscript $k$ or $m$ when $k=n$ or $m=n$, respectively.  We denote tensor product of such bimodules by juxtaposition.  For instance $(n)_{n-1}(n)$ denotes $\kk S_n \otimes_{n-1} \kk S_n$, considered as an $(S_n,S_n)$-bimodule, where we write $\otimes_m$ for the tensor product over $\kk S_m$.  We adopt the convention that $s_i s_{i+1} \dotsm s_j = 1$ when $i > j$.  Then the elements
\begin{equation} \label{gi-def}
  g_i = s_i s_{i+1} \dotsm s_{n-1},\quad i=1,\dotsc,n,
\end{equation}
form a complete set of left coset representatives of $S_{n-1}$ in $S_n$.

On objects, we define
\[
  \Theta(\uparrow) = \left( (n)_{n-1} \right)_{n \ge 1},\qquad
  \Theta(\downarrow) = \left( {}_{n-1}(n) \right)_{n \ge 1}.
\]
On the generating morphisms, we define
\begin{align*}
  \Theta
  \left(
    \begin{tikzpicture}[anchorbase]
      \draw[->] (0.6,0) -- (0,0.6);
      \draw[->] (0,0) -- (0.6,0.6);
    \end{tikzpicture}
  \right)
  &=
  \Big(
    (n)_{n-2} \to (n)_{n-2},\ g \mapsto g s_{n-1}
  \Big)_{n \ge 2},
  \\
  \Theta
  \left(
    \begin{tikzpicture}[anchorbase]
      \draw[->] (0,.2) -- (0,0) arc (180:360:.3) -- (.6,.2);
    \end{tikzpicture}
  \right)
  &=
  \Big(
    (n-1) \to {}_{n-1}(n)_{n-1},\ g \mapsto g
  \Big)_{n \ge 1},
  \\
  \Theta
  \left(
    \begin{tikzpicture}[anchorbase]
      \draw[->] (0,-.2) -- (0,0) arc (180:0:.3) -- (.6,-.2);
    \end{tikzpicture}
  \right)
  &=
  \Big(
    (n)_{n-1}(n) \to (n),\ g \otimes h \mapsto gh
  \Big)_{n \ge 1},
  \\
  \Theta
  \left(
    \begin{tikzpicture}[anchorbase]
      \draw[<-] (0,.2) -- (0,0) arc (180:360:.3) -- (.6,.2);
    \end{tikzpicture}
  \right)
  &=
  \Big( \textstyle
    (n) \to (n)_{n-1}(n),\ g \mapsto \sum_{i=1}^n g_i \otimes g_i^{-1}g = \sum_{i=1}^n g g_i \otimes g_i^{-1}
  \Big)_{n \ge 1},
  \\
  \Theta
  \left(
    \begin{tikzpicture}[anchorbase]
      \draw[<-] (0,0) -- (0,.2) arc (180:0:.3) -- (.6,0);
    \end{tikzpicture}
  \right)
  &=
  \left(
    {}_{n-1}(n)_{n-1} \to (n-1),\ g \mapsto
    \begin{cases}
      g & \text{if } g \in S_{n-1}, \\
      0 & \text{if } g \in S_n \setminus S_{n-1}
    \end{cases}
  \right)_{n \ge 1}.
\end{align*}
One can then compute that
\begin{align*}
  \Theta
  \left(
    \begin{tikzpicture}[anchorbase]
      \draw[<-] (0,0) -- (0.6,0.6);
      \draw[->] (0.6,0) -- (0,0.6);
    \end{tikzpicture}
  \right)
  &=
  \left(
    {}_{n-1}(n)_{n-1} \to (n-1)_{n-2}(n-1),\
    \begin{cases}
      g s_{n-1} h \mapsto g \otimes h, & g,h \in S_{n-1}, \\
      g \mapsto 0, & g \in S_{n-1}
    \end{cases}
  \right)_{n \ge 2},
  \\
  \Theta
  \left(
    \begin{tikzpicture}[anchorbase]
      \draw[->] (0,0) -- (0.6,0.6);
      \draw[<-] (0.6,0) -- (0,0.6);
    \end{tikzpicture}
  \right)
  &=
  \Big(
    (n-1)_{n-2}(n-1) \to {}_{n-1}(n)_{n-1},\ g \otimes h \mapsto g s_{n-1} h
  \Big)_{n \ge 2},\\
  \Theta
  \left(
    \begin{tikzpicture}[anchorbase]
      \draw[<-] (0,0) -- (0.6,0.6);
      \draw[<-] (0.6,0) -- (0,0.6);
    \end{tikzpicture}
  \right)
  &=
  \Big(
    {}_{n-2}(n) \to {}_{n-2}(n),\ g \mapsto s_{n-1} g
  \Big)_{n \ge 2}.
\end{align*}

Restricting to $\Peis$ yields a functor, which we denote by the same symbol,
\[
  \Theta \colon \Peis \to \bigoplus_{m \in \N} (S_m,S_m)\bmd.
\]
Recall that $\mathbf{1}_n$ denotes the one-dimensional trivial $S_n$-module.  Then the functor $- \otimes_n \mathbf{1}_n$ of tensoring on the right with $\mathbf{1}_n$ gives a functor
\[
  \bigoplus_{m \in \N} (S_m,S_m)\bmd
  \xrightarrow{- \otimes_n \mathbf{1}_n} S_n\md.
\]
Here we define $M \otimes_n \mathbf{1}_n = 0$ for $M \in (S_m,S_m)\bmd$, $m \ne n$.  Consider the composition
\[
  \Peis
  \xrightarrow{\Theta} \bigoplus_{m \in \N} (S_m,S_m)\bmd
  \xrightarrow{- \otimes_n \mathbf{1}_n} S_n\md.
\]
It is straightforward to verify that the image of the relation \cref{H4} under this composition holds in $S_n\md$ with $t=n$.  Therefore, the composition factors through $\Peis(n)$ to give us our action functor:
\begin{equation} \label{Omegadef}
  \Omega_n \colon \Peis(n) \to S_n\md.
\end{equation}
Note that the functor $\Omega_n$ is not monoidal, since the functor $- \otimes_n \mathbf{1}_n$ is not.

\section{Existence of the embedding functor\label{sec:functor}}

In this section we define a functor from the partition category to the Heisenberg category.  We will later show, in \cref{faithful,appendix}, that this functor is faithful.  As we will see in \cref{sec:faithful}, the existence of this functor arises from the fact that the composition $\Ind_{n-1}^n \circ \Res_{n-1}^n$ of the induction functor $\Ind_{n-1}^n \colon S_{n-1}\md \to S_n\md$ and the restriction functor $\Res_{n-1}^n \colon S_n\md \to S_{n-1}\md$ is naturally isomorphic to the functor of tensoring with the permutation module of $S_n$.

\begin{theo} \label{functordef}
  There is a strict linear monoidal functor $\Psi_t \colon \Par(t) \to \Peis(t)$ defined on objects by $k \mapsto (\uparrow \downarrow)^k$ and on generating morphisms by
  \begin{gather*}
    \mu =
    \begin{tikzpicture}[anchorbase]
      \pd{0,0};
      \pd{0.5,0};
      \pd{0.25,0.5};
      \draw (0,0) \braidto (0.25,0.5);
      \draw (0.5,0) \braidto (0.25,0.5);
    \end{tikzpicture}
    \mapsto
    \begin{tikzpicture}[anchorbase]
      \draw[->] (0,0) \braidto (0.33,1);
      \draw[<-] (1,0) \braidto (0.67,1);
      \draw[<-] (0.33,0) to (0.33,0.1) to[out=up,in=up,looseness=2] (0.67,0.1) to (0.67,0);
    \end{tikzpicture}
    \ ,\qquad
    \parcop =
    \begin{tikzpicture}[anchorbase]
      \pd{0,0.5};
      \pd{0.5,0.5};
      \pd{0.25,0};
      \draw (0.25,0) \braidto (0,0.5);
      \draw (0.25,0) \braidto (0.5,0.5);
    \end{tikzpicture}
    \mapsto
    \begin{tikzpicture}[anchorbase]
      \draw[->] (0.33,0) \braidto (0,1);
      \draw[<-] (0.67,0) \braidto (1,1);
      \draw[->] (0.33,1) to (0.33,0.9) to[out=down,in=down,looseness=2] (0.67,0.9) to (0.67,1);
    \end{tikzpicture}
    \ ,\qquad
    s =
    \begin{tikzpicture}[anchorbase]
      \pd{0,0};
      \pd{0.5,0};
      \pd{0,0.5};
      \pd{0.5,0.5};
      \draw (0,0) \braidto (0.5,0.5);
      \draw (0.5,0) \braidto (0,0.5);
    \end{tikzpicture}
    \mapsto
    \begin{tikzpicture}[anchorbase]
      \draw[->] (0,0) \braidto (1,1);
      \draw[<-] (0.5,0) \braidto (1.5,1);
      \draw[->] (1,0) \braidto (0,1);
      \draw[<-] (1.5,0) \braidto (0.5,1);
    \end{tikzpicture}
    \ +\
    \begin{tikzpicture}[anchorbase]
      \draw[->] (1.2,0) -- (1.2,1);
      \draw[->] (1.5,1) -- (1.5,0.9) arc (180:360:.25) -- (2,1);
      \draw[<-] (1.5,0) -- (1.5,0.1) arc (180:0:.25) -- (2,0);
      \draw[<-] (2.3,0) -- (2.3,1);
    \end{tikzpicture}
    \ ,
    \\
    \eta =
    \begin{tikzpicture}[anchorbase]
      \pd{0,0.5};
      \draw (0,0.25) to (0,0.5);
    \end{tikzpicture}
    \mapsto
    \begin{tikzpicture}[anchorbase]
      \draw[<-] (0,1) -- (0,0.9) arc (180:360:.25) -- (0.5,1);
    \end{tikzpicture}
    \ ,\qquad
    \varepsilon =
    \begin{tikzpicture}[anchorbase]
      \pd{0,0};
      \draw (0,0) to (0,0.25);
    \end{tikzpicture}
    \mapsto
    \begin{tikzpicture}[anchorbase]
      \draw[->] (0,0) -- (0,0.1) arc (180:0:.25) -- (0.5,0);
    \end{tikzpicture}
    \ .
  \end{gather*}
\end{theo}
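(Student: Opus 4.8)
The plan is to use the generators-and-relations presentation of $\Par(t)$ supplied by \cref{Ppresent}. A strict $\kk$-linear monoidal functor out of a $\kk$-linear monoidal category specified by generators and relations is determined by its values on the generating object and generating morphisms, and exists precisely when those values respect all the defining relations. So two things must be done. First, one checks that the five displayed morphisms of $\Peis(t)$ have the sources and targets forced by the assignment $k \mapsto (\uparrow\downarrow)^k$; for instance $\Psi_t(\mu)$ must be a morphism $(\uparrow\downarrow)^2 \to \uparrow\downarrow$, which is exactly what the given diagram depicts, and similarly for $\parcop, s, \eta, \varepsilon$. Second, one verifies that the images of $\mu,\parcop,s,\eta,\varepsilon$ satisfy the relations \cref{P1,P2,P3,P4}. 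Granting these, the assignment extends uniquely to the desired strict $\kk$-linear monoidal functor $\Psi_t$. It is worth noting that, although \cref{Ppresent} identifies $\Par(t)$ with the free $\kk$-linear symmetric monoidal category on a $t$-dimensional special commutative Frobenius object, the target $\Peis(t)$ is \emph{not} symmetric monoidal (its crossings do not square to the identity), so one cannot invoke that universal property directly and must instead check the explicit relations.

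Each relation of \cref{Ppresent} becomes a diagrammatic identity in $\Peis(t)$, to be established using the defining relations \cref{H1,H2,H3}, the strict pivotal (isotopy) structure, the bubble slides \cref{bubslide}, the generalized braid move \cref{genbraid}, and, for the dimension relation, the defining relation \cref{H4} of $\Peis(t)$. I would verify them in the following order. The Frobenius relations \cref{P1} — the (co)unit axioms and the Frobenius compatibility of $\Psi_t(\mu)$ with $\Psi_t(\parcop)$ — are essentially topological: after isotoping cups and caps they reduce to the zig-zag relations \cref{H2}, and by the remark following \cref{Ppresent} it suffices to check one of the two equalities in the first line. Next come the symmetry relations \cref{P2,P3}. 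Because $\Psi_t(s)$ is the sum of a four-strand crossing and a cap-cup correction term, these are the most laborious: one expands the relevant composites, uses \cref{H1} to resolve crossings of equally oriented strands, and, crucially, applies the inversion relation \cref{H3} (equivalently the decomposition \cref{key}) to recombine the resulting terms. Finally, relation \cref{P4} splits into commutativity $\Psi_t(\mu)\circ\Psi_t(s) = \Psi_t(\mu)$ and specialness $\Psi_t(\mu)\circ\Psi_t(\parcop) = 1_{\uparrow\downarrow}$, proved by isotopy together with \cref{H2,H3}, and the dimension relation, where the image of $\varepsilon\circ\eta$ isotopes into the clockwise bubble and equals $t\,1_\one$ by \cref{H4}.

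The main obstacle is the verification of \cref{P2,P3}, and in particular the relation $\Psi_t(s)^2 = 1_{(\uparrow\downarrow)^2}$. Squaring the two-term expression for $\Psi_t(s)$ produces several diagrams, and showing that the cross terms together with the square of the correction term reorganize into the identity is exactly where the inversion relation \cref{H3}, with its bubble-correction summand, must be applied carefully. The braid relation and the naturality of $\Psi_t(s)$ with respect to $\Psi_t(\mu)$ and $\Psi_t(\parcop)$ in \cref{P3} involve the same kind of term bookkeeping. By contrast, the Frobenius relations \cref{P1} and the dimension relation in \cref{P4} are comparatively routine, being immediate consequences of isotopy invariance, the zig-zag moves \cref{H2}, and \cref{H4}.
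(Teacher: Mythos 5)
Your proposal follows essentially the same route as the paper's own proof: the paper likewise reduces the theorem to checking that the images of $\mu,\parcop,s,\eta,\varepsilon$ satisfy \cref{P1,P2,P3,P4}, verifying \cref{P1} by isotopy invariance (pivotality), \cref{P2,P3} by expanding the two-term expression for $\Psi_t(s)$ and applying \cref{H1}, \cref{H3} and \cref{genbraid}, and \cref{P4} via \cref{H3,H4}. The subtleties you flag --- the cross terms killed by vanishing left curls, the bubble-correction summand of the inversion relation \cref{H3} in proving $\Psi_t(s)^2 = 1_{(\uparrow\downarrow)^2}$, and the fact that one cannot invoke the free-symmetric-monoidal universal property because $\Peis(t)$ is not symmetric --- are exactly the points the paper's computations turn on.
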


\begin{proof}
  It suffices to prove that the functor $\Psi_t$ preserves the relations \cref{P1,P2,P3,P4}.  Since the objects $\uparrow$ and $\downarrow$ are both left and right dual to each other, the fact that $\Psi_t$ preserves the relations \cref{P1} corresponds to the well-known fact that when $X$ and $Y$ are objects in a monoidal category that are both left and right dual to each other, then $XY$ is a Frobenius object.  Alternatively, one easily can verify directly that $\Psi_t$ preserves the relations \cref{P1}.  This uses only the isotopy invariance in $\Heis$ (i.e.\ the fact that $\Heis$ is strictly pivotal).

  To verify the first relation in \cref{P2}, we compute the image of the left-hand side.  Since left curls in $\Peis(t)$ are zero by \cref{H3}, this image is
  \[
    \Psi_t(s) \circ \Psi_t(s)
    =
    \begin{tikzpicture}[anchorbase]
      \draw[->] (0,0) \braidto (1,1) \braidto (0,2);
      \draw[<-] (0.5,0) \braidto (1.5,1) \braidto (0.5,2);
      \draw[->] (1,0) \braidto (0,1) \braidto (1,2);
      \draw[<-] (1.5,0) \braidto (0.5,1) \braidto (1.5,2);
    \end{tikzpicture}
    \ +\
    \begin{tikzpicture}[anchorbase]
      \draw[->] (-0.8,-1) to (-0.8,1);
      \draw[<-] (0.8,-1) to (0.8,1);
      \draw[<-] (-0.3,-1) to (-0.3,-0.9) arc(180:0:0.3) to (0.3,-1);
      \draw[->] (-0.3,1) to (-0.3,0.9) arc(180:360:0.3) to (0.3,1);
      \draw[->] (0.3,0) arc(0:360:0.3);
    \end{tikzpicture}
    \ \underset{\cref{H3}}{\overset{\cref{H1}}{=}}\
    \begin{tikzpicture}[anchorbase]
      \draw[->] (0,0) to (0,2);
      \draw[<-] (0.5,0) \braidto (1,1) \braidto (0.5,2);
      \draw[->] (1,0) \braidto (0.5,1) \braidto (1,2);
      \draw[<-] (1.5,0) to (1.5,2);
    \end{tikzpicture}
    \ +\
    \begin{tikzpicture}[anchorbase]
      \draw[->] (-0.8,-1) to (-0.8,1);
      \draw[<-] (0.8,-1) to (0.8,1);
      \draw[<-] (-0.3,-1) to (-0.3,-0.9) arc(180:0:0.3) to (0.3,-1);
      \draw[->] (-0.3,1) to (-0.3,0.9) arc(180:360:0.3) to (0.3,1);
      \draw[->] (0.3,0) arc(0:360:0.3);
    \end{tikzpicture}
    \ \overset{\cref{H3}}{=}
    1_{(\uparrow \downarrow)^2}.
  \]

  Next we verify the second relation in \cref{P2}.  First we compute
  \begin{multline*}
    \Psi_t(1_1 \otimes s) \circ \Psi_t(s \otimes 1_1)
    \\
    =
    \begin{tikzpicture}[anchorbase]
      \draw[->] (0,0) to[out=up,in=240] (2,1);
      \draw[<-] (0.5,0) to[out=60,in=down] (2.5,1);
      \draw[->] (1,0) \braidto (0,1);
      \draw[<-] (1.5,0) \braidto (0.5,1);
      \draw[->] (2,0) \braidto (1,1);
      \draw[<-] (2.5,0) \braidto (1.5,1);
    \end{tikzpicture}
    \ +\
    \begin{tikzpicture}[anchorbase]
      \draw[->] (0,0) to (0,1);
      \draw[<-] (0.5,0) to (0.5,0.1) to[out=up,in=up,looseness=1.5] (1,0.1) to (1,0);
      \draw[<-] (1.5,0) \braidto (2.5,1);
      \draw[->] (2,0) \braidto (1,1);
      \draw[<-] (2.5,0) \braidto (1.5,1);
      \draw[->] (0.5,1) to[out=down,in=down] (2,1);
    \end{tikzpicture}
    \ +\
    \begin{tikzpicture}[anchorbase]
      \draw[->] (0,0) \braidto (1,1);
      \draw[<-] (0.5,0) to[out=up,in=up] (2,0);
      \draw[->] (1,0) \braidto (0,1);
      \draw[<-] (1.5,0) \braidto (0.5,1);
      \draw[<-] (2.5,0) to (2.5,1);
      \draw[->] (1.5,1) to (1.5,0.9) to[out=down,in=down,looseness=1.5] (2,0.9) to (2,1);
    \end{tikzpicture}
    \ +\
    \begin{tikzpicture}[anchorbase]
      \draw[->] (0,0) to (0,1);
      \draw[<-] (0.5,0) to (0.5,0.1) to[out=up,in=up,looseness=1.5] (1,0.1) to (1,0);
      \draw[<-] (1.5,0) to (1.5,0.1) to[out=up,in=up,looseness=1.5] (2,0.1) to (2,0);
      \draw[<-] (2.5,0) to (2.5,1);
      \draw[->] (0.5,1) to (0.5,0.9) to[out=down,in=down,looseness=1.5] (1,0.9) to (1,1);
      \draw[->] (1.5,1) to (1.5,0.9) to[out=down,in=down,looseness=1.5] (2,0.9) to (2,1);
    \end{tikzpicture}
    \ .
  \end{multline*}
  Thus, using the fact that left curls are zero and counterclockwise bubbles are $1_\one$ by \cref{H3}, we have
  \begin{align*}
    &\Psi_t(s \otimes 1_1)\circ \Psi_t(1_1 \otimes s) \circ \Psi_t(s \otimes 1_1) \\
    &=
    \begin{tikzpicture}[anchorbase]
      \draw[->] (0,-0.75) to[out=up,in=240] (2,0.75);
      \draw[<-] (0.5,-0.75) to[out=60,in=down] (2.5,0.75);
      \draw[->] (1,-0.75) \braidto (2.1,0) \braidto (1,0.75);
      \draw[<-] (1.5,-0.75) to[out=60,in=down] (2.5,0) to[out=up,in=-60] (1.5,0.75);
      \draw[->] (2,-0.75) to[out=120,in=down] (0,0.75);
      \draw[<-] (2.5,-0.75) to[out=up,in=-60] (0.5,0.75);
    \end{tikzpicture}
    +
    \begin{tikzpicture}[anchorbase]
      \draw[->] (0,-0.75) to (0,0.75);
      \draw[<-] (0.5,-0.75) to[out=up,in=up,looseness=0.8] (2,-0.75);
      \draw[->] (1,-0.75) \braidto (1.75,0) \braidto (1,0.75);
      \draw[<-] (1.5,-0.75) \braidto (2.25,0) \braidto (1.5,0.75);
      \draw[<-] (2.5,-0.75) to[out=up,in=down,looseness=0.8] (1.25,0) to[out=up,in=down,looseness=0.8] (2.5,0.75);
      \draw[->] (0.5,0.75) to[out=down,in=down,looseness=0.8] (2,0.75);
    \end{tikzpicture}
    +
    \begin{tikzpicture}[anchorbase]
      \draw[->] (0,-0.75) \braidto (1,0.75);
      \draw[<-] (1.5,-0.75) \braidto (2.5,0.75);
      \draw[->] (2,-0.75) \braidto (0,0.75);
      \draw[<-] (2.5,-0.75) \braidto (0.5,0.75);
      \draw[->] (1.5,0.75) to (1.5,0.65) to[out=down,in=down,looseness=1.5] (2,0.65) to (2,0.75);
      \draw[->] (1,-0.75) \braidto (1.7,0.1) to[out=up,in=up,looseness=1.5] (1.2,0.1) to[out=down,in=up] (0.5,-0.75);
    \end{tikzpicture}
    +
    \begin{tikzpicture}[anchorbase]
      \draw[->] (0,-0.75) to[out=up,in=240] (2,0.75);
      \draw[<-] (0.5,-0.75) to[out=60,in=down] (2.5,0.75);
      \draw[->] (1,-0.75) \braidto (0,0.75);
      \draw[<-] (1.5,-0.75) to (1.5,-0.65) to[out=up,in=up,looseness=1.5] (2,-0.65) to (2,-0.75);
      \draw[<-] (2.5,-0.75) \braidto (1.5,0.75);
      \draw[->] (0.5,0.75) to (0.5,0.65) to[out=down,in=down,looseness=1.5] (1,0.65) to (1,0.75);
    \end{tikzpicture}
    +
    \begin{tikzpicture}[anchorbase]
      \draw[->] (0,-0.75) to (0,0.75);
      \draw[<-] (0.5,-0.75) to (0.5,-0.65) to[out=up,in=up,looseness=1.5] (1,-0.65) to (1,-0.75);
      \draw[<-] (1.5,-0.75) to (1.5,-0.65) to[out=up,in=up,looseness=1.5] (2,-0.65) to (2,-0.75);
      \draw[->] (0.5,0.75) to (0.5,0.65) to[out=down,in=down,looseness=1.5] (1,0.65) to (1,0.75);
      \draw[->] (1.5,0.75) to (1.5,0.65) to[out=down,in=down,looseness=1.5] (2,0.65) to (2,0.75);
      \draw[<-] (2.5,-0.75) to (2.5,0.75);
    \end{tikzpicture}
    \\
    &\underset{\cref{H3}}{\overset{\cref{H1}}{=}}
    \begin{tikzpicture}[anchorbase]
      \draw[->] (0,-0.75) to[out=up,in=240] (2,0.75);
      \draw[<-] (0.5,-0.75) to[out=60,in=down] (2.5,0.75);
      \draw[->] (1,-0.75) \braidto (2.1,0) \braidto (1,0.75);
      \draw[<-] (1.5,-0.75) to[out=60,in=down] (2.5,0) to[out=up,in=-60] (1.5,0.75);
      \draw[->] (2,-0.75) to[out=120,in=down] (0,0.75);
      \draw[<-] (2.5,-0.75) to[out=up,in=-60] (0.5,0.75);
    \end{tikzpicture}
    +
    \begin{tikzpicture}[anchorbase]
      \draw[->] (0,-0.75) to (0,0.75);
      \draw[->] (1,-0.75) to (1,0.75);
      \draw[<-] (1.5,-0.75) to (1.5,0.75);
      \draw[<-] (2.5,-0.75) to (2.5,0.75);
      \draw[<-] (0.5,-0.75) to[out=up,in=up] (2,-0.75);
      \draw[->] (0.5,0.75) to[out=down,in=down] (2,0.75);
    \end{tikzpicture}
    +
    \begin{tikzpicture}[anchorbase]
      \draw[->] (0,-0.75) \braidto (1,0.75);
      \draw[<-] (1.5,-0.75) \braidto (2.5,0.75);
      \draw[->] (2,-0.75) to[out=120,in=down] (0,0.75);
      \draw[<-] (2.5,-0.75) to[out=up,in=-60] (0.5,0.75);
      \draw[->] (1.5,0.75) to (1.5,0.65) to[out=down,in=down,looseness=1.5] (2,0.65) to (2,0.75);
      \draw[<-] (0.5,-0.75) to (0.5,-0.65) to[out=up,in=up,looseness=1.5] (1,-0.65) to (1,-0.75);
    \end{tikzpicture}
    +
    \begin{tikzpicture}[anchorbase]
      \draw[->] (0,-0.75) to[out=up,in=240] (2,0.75);
      \draw[<-] (0.5,-0.75) to[out=60,in=down] (2.5,0.75);
      \draw[->] (1,-0.75) \braidto (0,0.75);
      \draw[<-] (1.5,-0.75) to (1.5,-0.65) to[out=up,in=up,looseness=1.5] (2,-0.65) to (2,-0.75);
      \draw[<-] (2.5,-0.75) \braidto (1.5,0.75);
      \draw[->] (0.5,0.75) to (0.5,0.65) to[out=down,in=down,looseness=1.5] (1,0.65) to (1,0.75);
    \end{tikzpicture}
    +
    \begin{tikzpicture}[anchorbase]
      \draw[->] (0,-0.75) to (0,0.75);
      \draw[<-] (0.5,-0.75) to (0.5,-0.65) to[out=up,in=up,looseness=1.5] (1,-0.65) to (1,-0.75);
      \draw[<-] (1.5,-0.75) to (1.5,-0.65) to[out=up,in=up,looseness=1.5] (2,-0.65) to (2,-0.75);
      \draw[->] (0.5,0.75) to (0.5,0.65) to[out=down,in=down,looseness=1.5] (1,0.65) to (1,0.75);
      \draw[->] (1.5,0.75) to (1.5,0.65) to[out=down,in=down,looseness=1.5] (2,0.65) to (2,0.75);
      \draw[<-] (2.5,-0.75) to (2.5,0.75);
    \end{tikzpicture}
    \ .
  \end{align*}
  Similarly,
  \begin{multline*}
    \Psi_t(1_1 \otimes s) \circ \Psi_t(s \otimes 1_1) \circ \Psi_t(1_1 \otimes s) \\
    =
    \begin{tikzpicture}[anchorbase]
      \draw[->] (0,-0.75) to[out=up,in=240] (2,0.75);
      \draw[<-] (0.5,-0.75) to[out=60,in=down] (2.5,0.75);
      \draw[->] (1,-0.75) to[out=120,in=down] (0,0) to[out=up,in=240] (1,0.75);
      \draw[<-] (1.5,-0.75) \braidto (0.4,0) \braidto (1.5,0.75);
      \draw[->] (2,-0.75) to[out=120,in=down] (0,0.75);
      \draw[<-] (2.5,-0.75) to[out=up,in=-60] (0.5,0.75);
    \end{tikzpicture}
    +
    \begin{tikzpicture}[anchorbase]
      \draw[->] (0,-0.75) to (0,0.75);
      \draw[->] (1,-0.75) to (1,0.75);
      \draw[<-] (1.5,-0.75) to (1.5,0.75);
      \draw[<-] (2.5,-0.75) to (2.5,0.75);
      \draw[<-] (0.5,-0.75) to[out=up,in=up] (2,-0.75);
      \draw[->] (0.5,0.75) to[out=down,in=down] (2,0.75);
    \end{tikzpicture}
    +
    \begin{tikzpicture}[anchorbase]
      \draw[->] (0,-0.75) \braidto (1,0.75);
      \draw[<-] (1.5,-0.75) \braidto (2.5,0.75);
      \draw[->] (2,-0.75) to[out=120,in=down] (0,0.75);
      \draw[<-] (2.5,-0.75) to[out=up,in=-60] (0.5,0.75);
      \draw[->] (1.5,0.75) to (1.5,0.65) to[out=down,in=down,looseness=1.5] (2,0.65) to (2,0.75);
      \draw[<-] (0.5,-0.75) to (0.5,-0.65) to[out=up,in=up,looseness=1.5] (1,-0.65) to (1,-0.75);
    \end{tikzpicture}
    +
    \begin{tikzpicture}[anchorbase]
      \draw[->] (0,-0.75) to[out=up,in=240] (2,0.75);
      \draw[<-] (0.5,-0.75) to[out=60,in=down] (2.5,0.75);
      \draw[->] (1,-0.75) \braidto (0,0.75);
      \draw[<-] (1.5,-0.75) to (1.5,-0.65) to[out=up,in=up,looseness=1.5] (2,-0.65) to (2,-0.75);
      \draw[<-] (2.5,-0.75) \braidto (1.5,0.75);
      \draw[->] (0.5,0.75) to (0.5,0.65) to[out=down,in=down,looseness=1.5] (1,0.65) to (1,0.75);
    \end{tikzpicture}
    +
    \begin{tikzpicture}[anchorbase]
      \draw[->] (0,-0.75) to (0,0.75);
      \draw[<-] (0.5,-0.75) to (0.5,-0.65) to[out=up,in=up,looseness=1.5] (1,-0.65) to (1,-0.75);
      \draw[<-] (1.5,-0.75) to (1.5,-0.65) to[out=up,in=up,looseness=1.5] (2,-0.65) to (2,-0.75);
      \draw[->] (0.5,0.75) to (0.5,0.65) to[out=down,in=down,looseness=1.5] (1,0.65) to (1,0.75);
      \draw[->] (1.5,0.75) to (1.5,0.65) to[out=down,in=down,looseness=1.5] (2,0.65) to (2,0.75);
      \draw[<-] (2.5,-0.75) to (2.5,0.75);
    \end{tikzpicture}
    \ .
  \end{multline*}
  Hence it follows from \cref{genbraid} that $\Psi_t$ preserves the second relation in \cref{P2}.

  To verify the first relation in \cref{P3}, we compute
  \[
    \Psi_t(s) \circ (1_1 \otimes \eta)
    =\
    \begin{tikzpicture}[anchorbase]
      \draw[->] (0,-0.5) \braidto (1,0.5);
      \draw[<-] (0.5,-0.5) \braidto (1.5,0.5);
      \draw[<-] (0,0.5) to[out=down,in=up] (1,-0.2) to[out=down,in=down,looseness=1.5] (1.4,-0.2) to[out=up,in=-60] (0.5,0.5);
    \end{tikzpicture}
    \ +\
    \begin{tikzpicture}[anchorbase]
      \draw[->] (0,-0.5) to (0,0.5);
      \draw[<-] (0.5,-0.5) to (0.5,-0.2) to[out=up,in=up,looseness=1.5] (1,-0.2) to[out=down,in=down,looseness=1.5] (1.5,-0.2) to (1.5,0.5);
      \draw[->] (0.5,0.5) to (0.5,0.4) to[out=down,in=down,looseness=1.5] (1,0.4) to (1,0.5);
    \end{tikzpicture}
    \ \underset{\cref{H3}}{\overset{\cref{H1}}{=}}\
    \begin{tikzpicture}[anchorbase]
      \draw[<-] (0,0.5) to (0,0.4) to[out=down,in=down,looseness=1.5] (0.5,0.4) to (0.5,0.5);
      \draw[->] (1,-0.5) to (1,0.5);
      \draw[<-] (1.5,-0.5) to (1.5,0.5);
    \end{tikzpicture}
    = \Psi_t(\eta \otimes 1_1).
  \]

  To verify the second relation in \cref{P3}, we first compute
  \begin{multline*}
    \Psi_t(s \otimes 1_1) \circ \Psi_t(1_1 \otimes s)
    \\
    =
    \begin{tikzpicture}[anchorbase]
      \draw[<-] (2.5,0) to[out=up,in=-60] (0.5,1);
      \draw[->] (2,0) to[out=120,in=down] (0,1);
      \draw[<-] (1.5,0) \braidto (2.5,1);
      \draw[->] (1,0) \braidto (2,1);
      \draw[<-] (0.5,0) \braidto (1.5,1);
      \draw[->] (0,0) \braidto (1,1);
    \end{tikzpicture}
    \ +\
    \begin{tikzpicture}[anchorbase]
      \draw[<-] (2.5,0) to (2.5,1);
      \draw[->] (2,0) to (2,0.1) to[out=up,in=up,looseness=1.5] (1.5,0.1) to (1.5,0);
      \draw[->] (1,0) \braidto (0,1);
      \draw[<-] (0.5,0) \braidto (1.5,1);
      \draw[->] (0,0) \braidto (1,1);
      \draw[<-] (2,1) to[out=down,in=down] (0.5,1);
    \end{tikzpicture}
    \ +\
    \begin{tikzpicture}[anchorbase]
      \draw[<-] (2.5,0) \braidto (1.5,1);
      \draw[->] (2,0) to[out=up,in=up] (0.5,0);
      \draw[<-] (1.5,0) \braidto (2.5,1);
      \draw[->] (1,0) \braidto (2,1);
      \draw[->] (0,0) to (0,1);
      \draw[<-] (1,1) to (1,0.9) to[out=down,in=down,looseness=1.5] (0.5,0.9) to (0.5,1);
    \end{tikzpicture}
    \ +\
    \begin{tikzpicture}[anchorbase]
      \draw[->] (0,0) to (0,1);
      \draw[<-] (0.5,0) to (0.5,0.1) to[out=up,in=up,looseness=1.5] (1,0.1) to (1,0);
      \draw[<-] (1.5,0) to (1.5,0.1) to[out=up,in=up,looseness=1.5] (2,0.1) to (2,0);
      \draw[<-] (2.5,0) to (2.5,1);
      \draw[->] (0.5,1) to (0.5,0.9) to[out=down,in=down,looseness=1.5] (1,0.9) to (1,1);
      \draw[->] (1.5,1) to (1.5,0.9) to[out=down,in=down,looseness=1.5] (2,0.9) to (2,1);
    \end{tikzpicture}
    \ .
  \end{multline*}
  Then, using \cref{H3}, we have
  \[
    \Psi_t(1_1 \otimes \mu) \circ \Psi_t(s \otimes 1_1) \circ \Psi_t(1_1 \otimes s)
    =
    \begin{tikzpicture}[anchorbase]
      \draw[->] (0,-0.5) \braidto (1.5,0.5);
      \draw[<-] (1.5,-0.5) \braidto (2,0.5);
      \draw[->] (2,-0.5) to[out=120,in=down] (0.5,0.5);
      \draw[<-] (2.5,-0.5) to[out=up,in=-60] (1,0.5);
      \draw[<-] (0.5,-0.5) to (0.5,-0.4) to[out=up,in=up,looseness=1.5] (1,-0.4) to (1,-0.5);
    \end{tikzpicture}
    +
    \begin{tikzpicture}[anchorbase]
      \draw[->] (0,-0.5) \braidto (0.5,0.5);
      \draw[<-] (0.5,-0.5) to (0.5,-0.4) to[out=up,in=up,looseness=1.5] (1,-0.4) to (1,-0.5);
      \draw[<-] (1.5,-0.5) to (1.5,-0.4) to[out=up,in=up,looseness=1.5] (2,-0.4) to (2,-0.5);
      \draw[->] (1,0.5) to (1,0.4) to[out=down,in=down,looseness=1.5] (1.5,0.4) to (1.5,0.5);
      \draw[<-] (2.5,-0.5) \braidto (2,0.5);
    \end{tikzpicture}
    = \Psi_t(s) \circ \Psi_t(\mu \otimes 1_1).
  \]
  The proofs of the second and third relations in \cref{P3} are analogous.

  Finally, to verify the relations \cref{P3}, we compute
  \begin{gather*}
    \Psi_t(\mu) \circ \Psi_t(s)
    =
    \begin{tikzpicture}[anchorbase]
      \draw[->] (0,-0.7) \braidto (1,0.2) to[out=up,in=up,looseness=1.5] (0.5,0.2) to[out=down,in=up] (1.5,-0.7);
      \draw[<-] (0.5,-0.7) \braidto (1.3,0) \braidto (1,0.7);
      \draw[->] (1,-0.7) \braidto (0.2,0) \braidto (0.5,0.7);
    \end{tikzpicture}
    \ +\
    \begin{tikzpicture}[anchorbase]
      \draw[->] (0,-0.7) \braidto (0.5,0.7);
      \draw[<-] (0.5,-0.7) to (0.5,-0.6) to[out=up,in=up,looseness=1.5] (1,-0.6) to (1,-0.7);
      \draw[<-] (1.5,-0.6) \braidto (1,0.7);
      \draw[->] (1,0) arc(0:360:0.2);
    \end{tikzpicture}
    \ \overset{\cref{H3}}{=} \
    \begin{tikzpicture}[anchorbase]
      \draw[->] (0,-0.7) \braidto (0.5,0.7);
      \draw[<-] (0.5,-0.7) to (0.5,-0.6) to[out=up,in=up,looseness=1.5] (1,-0.6) to (1,-0.7);
      \draw[<-] (1.5,-0.6) \braidto (1,0.7);
    \end{tikzpicture}
    \ = \Psi_t(\mu),
    \\
    \Psi_t(\mu) \circ \Psi_t(\parcop)
    =
    \begin{tikzpicture}[anchorbase]
      \draw[->] (-0.25,-0.7) \braidto (-0.4,0) \braidto (-0.25,0.7);
      \draw[<-] (0.25,-0.7) \braidto (0.4,0) \braidto (0.25,0.7);
      \draw[->] (0.2,0) arc(0:360:0.2);
    \end{tikzpicture}
    \ \overset{\cref{H3}}{=}\
    \begin{tikzpicture}[anchorbase]
      \draw[->] (-0.25,-0.7) to (-0.25,0.7);
      \draw[<-] (0.25,-0.7) to (0.25,0.7);
    \end{tikzpicture}
    \ = \Psi_t(1_1),
    \\
    \Psi_t(\varepsilon) \circ \Psi_t(\eta)
    =
    \begin{tikzpicture}[anchorbase]
      \draw[->] (0.3,0) arc(360:0:0.3);
    \end{tikzpicture}
    \overset{\cref{H4}}{=} t 1_\one
    = \Psi_t (t 1_0). \qedhere
  \end{gather*}
\end{proof}

As an example that will be used later, we compute
\begin{equation} \label{square}
  \Psi_t
  \left(
    \begin{tikzpicture}[anchorbase]
      \pd{0,0};
      \pd{0.5,0};
      \pd{0,0.5};
      \pd{0.5,0.5};
      \draw (0,0) to (0.5,0) to (0.5,0.5) to (0,0.5) to (0,0);
    \end{tikzpicture}
  \right)
  =
  \Psi_t
  \left(
    \begin{tikzpicture}[anchorbase]
      \pd{0,0};
      \pd{0.5,0};
      \pd{0.25,0.5};
      \pd{0,1};
      \pd{0.5,1};
      \draw (0,0) \braidto (0.25,0.5) \braidto (0,1);
      \draw (0.5,0) \braidto (0.25,0.5) \braidto (0.5,1);
    \end{tikzpicture}
  \right)
  =
  \begin{tikzpicture}[anchorbase]
    \draw[->] (1.2,0) -- (1.2,1);
    \draw[->] (1.5,1) -- (1.5,0.9) arc (180:360:.25) -- (2,1);
    \draw[<-] (1.5,0) -- (1.5,0.1) arc (180:0:.25) -- (2,0);
    \draw[<-] (2.3,0) -- (2.3,1);
  \end{tikzpicture}
  \ .
\end{equation}
Note that this is the second term in $\Psi_t(s)$ from \cref{functordef}.

\begin{rem} \label{bike}
  There are two natural ways to enlarge the codomain of the functor $\Psi_t$ to the entire Heisenberg category $\Heis$ (or a suitable quotient), rather than the category $\Peis(t)$.  The obstacle to this is that the clockwise bubble is not central in $\Heis$ and so the relation \cref{H4} is not well behaved there.  We continue to suppose that $\kk$ is a commutative ring.
  \begin{enumerate}
    \item \label{wheel} We can define $\Par$ to be the $\kk$-linear \emph{partition category with bubbles}, which has the same presentation as in \cref{Ppresent}, but without the last relation in \cref{P4}.  Free floating blocks (i.e.\ blocks not containing any vertices at the top or bottom of a diagram) are strictly central ``bubbles''.
        \details{
          They are strictly central since
          \[
            \begin{tikzpicture}[anchorbase]
              \pd{0,0.5};
              \pd{0.5,0};
              \pd{0.5,1};
              \draw (0,0.25) to (0,0.75);
              \draw (0.5,0) to (0.5,1);
            \end{tikzpicture}
            \ =\
            \begin{tikzpicture}[anchorbase]
              \pd{0,0.5};
              \pd{0,1};
              \pd{0,1.5};
              \pd{0.5,0};
              \pd{0.5,0.5};
              \pd{0.5,1};
              \pd{0.5,1.5};
              \pd{0.5,2};
              \draw (0,0.25) to (0,0.5) \braidto (0.5,1) \braidto (0,1.5) to (0,1.75);
              \draw (0.5,0) to (0.5,0.5) \braidto (0,1) \braidto (0.5,1.5) to (0.5,2);
            \end{tikzpicture}
            \ =\
            \begin{tikzpicture}[anchorbase]
              \pd{0.5,0.5};
              \pd{0,0};
              \pd{0,1};
              \draw (0.5,0.25) to (0.5,0.75);
              \draw (0,0) to (0,1);
            \end{tikzpicture}
            \ .
          \]
        }
        The category $\Par(t)$ is obtained from $\Par$ by specializing the bubble at $t$.  Then we have a $\kk$-linear monoidal functor $\Par \to \Heis$ (factoring through $\Peis$) mapping the bubble of $\Par$ to the clockwise bubble
        \[
          \begin{tikzpicture}[anchorbase]
            \draw[->] (0,0.3) arc(450:90:0.3);
          \end{tikzpicture}
          \ .
        \]
        This is equivalent to considering $\Par(t)$ over the ring $\kk[t]$ and $\Heis$ over $\kk$ and viewing $\Psi_t$ as a $\kk$-linear monoidal functor $\Par(t) \to \Heis$ with
        \[
          t \mapsto
          \begin{tikzpicture}[anchorbase]
            \draw[->] (0,0.3) arc(450:90:0.3);
          \end{tikzpicture}
          \ .
        \]
        (Then $t$ is the ``bubble'' in the partition category.)  We refer to this setting by saying that $t$ is \emph{generic}.

    \item If $t \in \kk$, let $\cI$ denote the left tensor ideal of $\Heis$ generated by
      \[
        \begin{tikzpicture}[anchorbase]
          \draw[->] (0,0.3) arc(450:90:0.3);
        \end{tikzpicture}
        - t 1_\one.
      \]
      Then $\Psi_t$ induces a $\kk$-linear functor
      \[
        \Par(t) \to \Heis/\cI.
      \]
      Note, however, that this induced functor is no longer monoidal.  Rather, it should be thought of as an action of $\Par(t)$ on the quotient $\Heis/\cI$.
    \end{enumerate}
\end{rem}

As noted in \cref{sec:partition}, the partition category is the free $\kk$-linear symmetric monoidal category generated by an $t$-dimensional special commutative Frobenius object.  Thus, \cref{functordef} implies that $\uparrow \downarrow$, together with certain morphisms, is a special commutative Frobenius object in the Heisenberg category.  Note, however, that neither the Heisenberg category nor $\Peis(t)$ is symmetric monoidal.

\section{Actions and faithfullness\label{sec:faithful}}

Consider the standard embedding of $S_{n-1}$ in $S_n$, and hence of $\kk S_{n-1}$ in $\kk S_n$.  Recall that we adopt the convention that $\kk S_n = \kk$ when $n=0$.  We have the natural induction and restriction functors
\[
  \Ind_{n-1}^n \colon S_{n-1}\md \to S_n\md,\qquad
  \Res_{n-1}^n \colon S_n\md \to S_{n-1}\md.
\]

If we let $B$ denote $\kk S_n$, considered as an $(S_n, S_{n-1})$-bimodule, then we have
\[
  \Ind_{n-1}^n \Res_{n-1}^n (M) = B \otimes_{n-1} M,\quad M \in S_n\md,
\]
where we recall that $\otimes_{n-1}$ denotes the tensor product over $\kk S_{n-1}$.  We will use the unadorned symbol $\otimes$ to denote tensor product over $\kk$.  As before, we denote the trivial one-dimensional $S_n$-module by $\mathbf{1}_n$.

Recall the coset representatives $g_i \in S_n$ defined in \cref{gi-def}.  In particular, we have
\begin{equation} \label{hungry}
  g_i^{-1} g_j \in S_{n-1} \iff i=j.
\end{equation}

Let $V = \kk^n$ be the permutation $S_n$-module with basis $v_1,\dotsc,v_n$.  Then we have
\begin{equation} \label{yeehah}
  B \otimes_{n-1} \mathbf{1}_{n-1} = \Ind_{n-1}^n(\mathbf{1}_{n-1}) \cong V
  \quad \text{as $S_n$-modules}.
\end{equation}
Furthermore, the elements $g_i \otimes_{n-1} 1$, $1 \le i \le n$, form a basis of $B \otimes_{n-1} \mathbf{1}_{n-1}$ and the isomorphism \cref{yeehah} is given explicitly by
\[
  B \otimes_{n-1} \mathbf{1}_{n-1} \xrightarrow{\cong} V,\quad
  g_i \otimes_{n-1} 1 \mapsto v_i = g_i v_n.
\]

More generally, define
\[
  B^k := \underbrace{B \otimes_{n-1} B \otimes_{n-1} \dotsb \otimes_{n-1} B}_{k \text{ factors}}.
\]
Then we have an isomorphism of $S_n$-modules
\begin{align*}
  \beta_k \colon V^{\otimes k} &\xrightarrow{\cong} B^k \otimes_{n-1} \mathbf{1}_{n-1}, \\
  v_{i_k} \otimes \dotsb \otimes v_{i_1} &\mapsto g_{i_k} \otimes g_{i_k}^{-1} g_{i_{k-1}} \otimes \dotsb \otimes g_{i_2}^{-1} g_{i_1} \otimes 1,\quad
  1 \le i_1,\dotsc,i_k \le n,
\end{align*}
extended by linearity, with inverse map
\begin{align*}
  \beta_k^{-1} \colon
  B^k \otimes_{n-1} \mathbf{1}_{n-1} &\xrightarrow{\cong} V^{\otimes k}, \\
  \pi_k \otimes \dotsb \otimes \pi_1 \otimes 1
  &\mapsto (\pi_k v_n) \otimes (\pi_k \pi_{k-1} v_n) \otimes \dotsm \otimes (\pi_k \dotsm \pi_1 v_n), \quad
  \pi_1,\dotsc,\pi_k \in S_n,
\end{align*}
extended by linearity.

\begin{theo} \label{actcom}
  Fix $n \in \N$, and recall the following functors from \cref{Phidef}, \cref{Omegadef}, and \cref{functordef}:
  \begin{equation}
    \begin{tikzcd}[column sep=2cm]
      \Par(n) \arrow[r,"\Psi_n"] \arrow[rd, swap,"\Phi_n"] &
      \Peis(n) \arrow[d,"\Omega_n"] \\
      & S_n\md
    \end{tikzcd}
  \end{equation}
  The morphisms $\beta_k$, $k \in \N$, give a natural isomorphism of functors $\Omega_n \circ \Psi_n \cong \Phi_n$.
\end{theo}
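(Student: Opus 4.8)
The plan is to take the components of the asserted natural isomorphism to be the maps $\beta_k \colon V^{\otimes k} \to B^k \otimes_{n-1} \mathbf{1}_{n-1}$, so I must check two things: that each $\beta_k$ really is an isomorphism $\Phi_n(k) \xrightarrow{\cong} \Omega_n\Psi_n(k)$, and that the naturality square commutes for every morphism of $\Par(n)$. On objects, unwinding the definitions gives $\Theta\Psi_n(k) = \Theta\bigl((\uparrow\downarrow)^k\bigr) = (n)_{n-1}(n)_{n-1}\dotsm{}_{n-1}(n)$ with $k+1$ factors of $\kk S_n$, so that $\Omega_n\Psi_n(k) = \Theta\Psi_n(k) \otimes_n \mathbf{1}_n = B^k \otimes_{n-1} \mathbf{1}_{n-1}$, which is exactly the codomain of $\beta_k$ and is already stated to be an isomorphism onto $V^{\otimes k} = \Phi_n(k)$. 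The engine of the whole proof is the natural isomorphism $\Ind_{n-1}^n\Res_{n-1}^n(M) = B \otimes_{n-1} M \xrightarrow{\cong} V \otimes M$, $g \otimes_{n-1} m \mapsto gv_n \otimes gm$, which one checks is well defined and $S_n$-equivariant. Writing $P := \Ind_{n-1}^n \circ \Res_{n-1}^n$, iterating this isomorphism yields coherent natural isomorphisms $P^j(M) \cong V^{\otimes j} \otimes M$ whose instance at $M = \mathbf{1}_n$ is precisely $\beta_j$.

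Next I reduce the naturality check to the five generators. Using the interchange law, every tensor product of two morphisms factors as a composite of morphisms each of which is a generator tensored with identities; iterating, every morphism of $\Par(n)$ is a composite of \emph{elementary} morphisms $1_a \otimes G \otimes 1_b$ with $G \in \{\mu, \parcop, s, \eta, \varepsilon\}$. Since naturality squares are closed under composition, it suffices to treat these elementary morphisms. Here I must be careful because $\Omega_n$ is \emph{not} monoidal; the point is that $\Theta$ is monoidal, so for $G \colon k \to \ell$, and using $\Theta((\uparrow\downarrow)^b)\otimes_n\mathbf{1}_n \cong V^{\otimes b}$ to absorb the right padding, one computes
\[
  \Omega_n\Psi_n(1_a \otimes G \otimes 1_b)
  = P^a\bigl( \Theta\Psi_n(G) \otimes_n V^{\otimes b} \bigr).
\]
Thus, once I know that for each bare generator $G$ the natural transformation $\Theta\Psi_n(G) \otimes_n (-) \colon P^k(-) \Rightarrow P^\ell(-)$ corresponds, under $P^j(-) \cong V^{\otimes j}\otimes(-)$, to $\Phi_n(G) \otimes \id_{(-)}$, the right padding is handled by evaluating at $M = V^{\otimes b}$ and the left padding by applying $P^a$ and invoking naturality of $P^a(-) \cong V^{\otimes a}\otimes(-)$. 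Since $\Phi_n$ is monoidal, this reassembles to $\id_{V^{\otimes a}} \otimes \Phi_n(G) \otimes \id_{V^{\otimes b}} = \Phi_n(1_a \otimes G \otimes 1_b)$, the total identification being $\beta_{a+k+b}$ by coherence of the isomorphisms $P^j \cong V^{\otimes j} \otimes (-)$.

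It then remains to verify, for each of the five generators $G$, the single natural-transformation identity above. Each of these is an explicit finite computation: I feed a general element $v \otimes m$ (with $m$ a formal element of an arbitrary $M \in S_n\md$) through $\Theta\Psi_n(G) \otimes_n (-)$ using the tabulated formulas for $\Theta$ on the cups, caps, and crossing, then through the isomorphism $P^j(-)\cong V^{\otimes j}\otimes(-)$, and compare with the given formula for $\Phi_n(G)$. For instance, for $\eta$ the map $\Theta\Psi_n(\eta)\otimes_n(-)$ sends $m \mapsto \sum_i g_i \otimes_{n-1} g_i^{-1} m$, which under $B \otimes_{n-1} M \cong V \otimes M$ becomes $\sum_i v_i \otimes m = \Phi_n(\eta)(1) \otimes m$, as required; the cap/multiplication $\varepsilon, \mu$ and the copy $\parcop$ are similar, the vanishing of the unwanted mixed terms being governed by \cref{hungry}.

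The main obstacle is twofold. Conceptually, the delicate point is the reduction in the second paragraph: because $\Omega_n$ is not monoidal one cannot simply say ``$\beta$ is monoidal and defined on generators,'' so the left padding must be handled honestly as an application of the functor $P^a$ (equivalently, tensoring on the left with $V^{\otimes a}$), together with the coherence identifying the iterated isomorphism with $\beta_{a+k+b}$. Computationally, the heaviest case is the crossing $s$, whose image $\Psi_n(s)$ has two terms and whose $\Theta$-image involves the bimodule maps for both the genuine crossing and the cap-cup; checking that, after passing to $V^{\otimes 2}\otimes M$, these combine to the bare transposition $v_i \otimes v_j \otimes m \mapsto v_j \otimes v_i \otimes m$ is the most laborious step, again relying on \cref{hungry} to see which terms survive.
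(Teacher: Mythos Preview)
Your plan is correct and will go through. The overall strategy---reduce to the elementary morphisms $1_a \otimes G \otimes 1_b$, then verify the naturality square for each of the five generators---is the same as in the paper, and the crossing $s$ is indeed the only case requiring real work, with \cref{hungry} doing exactly what you say it does.

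Where you differ from the paper is in how you handle the padding. The paper simply computes $\beta_\ell^{-1}\circ\Omega_n\Psi_n(1_a\otimes G\otimes 1_b)\circ\beta_k$ directly on a general basis vector $v_{i_k}\otimes\dotsb\otimes v_{i_1}$, carrying the indices $i_{a+k+b},\dotsc,i_1$ through each step by hand; this is elementary but means every generator check is done ``in context'' with the full string of $g_{i_j}$'s present. You instead promote the single-step isomorphism $B\otimes_{n-1}M \cong V\otimes M$ to a natural isomorphism $P^j(-)\cong V^{\otimes j}\otimes(-)$, observe that $\beta_j$ is its value at $M=\mathbf{1}_n$, and then use functoriality of $P^a$ and naturality in $M$ to absorb the left and right padding for free, reducing each check to the bare generator as a natural transformation $P^k(-)\Rightarrow P^\ell(-)$. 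This buys you a cleaner and shorter argument once the coherence of the iterated isomorphisms is noted, at the cost of that extra layer of abstraction; the paper's version is more pedestrian but entirely self-contained. For the crossing, both approaches ultimately compute $\Theta(x)$ and $\Theta(y)$ on $hg_i\otimes g_i^{-1}g_j\otimes g_j^{-1}h'$ and observe that their sum swaps $i$ and $j$---this core computation \cref{turkey} is unavoidable either way.
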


\begin{proof}
  Since the $\beta_k$ are isomorphisms, it suffices to verify that they define a natural transformation.  For this, we check the images of a set of generators of $\Par(n)$.  Since the functor $\Omega_n$ is not monoidal, we need to consider generators of $\Par(n)$ as a $\kk$-linear category.  Such a set of generators is given by
  \[
    1_k \otimes x \otimes 1_j,\quad
    k,j \in \N,\ x \in \{\mu,\parcop,s,\eta,\epsilon\}.
  \]
  See, for example, \cite[Th.~5.2]{Liu18}.

  Let $j \in \{1,2,\dotsc,n-1\}$.  We compute that
  \[
    \beta_{k-1}^{-1} \circ \left( \Omega_n \circ \Psi_n \left( 1_{k-j-1} \otimes \mu \otimes 1_{j-1} \right) \right) \circ \beta_k
    \colon V^{\otimes k} \to V^{\otimes {k-1}}
  \]
  is the $S_n$-module map given by
  \begin{align*}
    v_{i_k} \otimes \dotsb \otimes v_{i_1}
    &\mapsto g_{i_k} \otimes g_{i_k}^{-1} g_{i_{k-1}} \otimes \dotsb \otimes g_{i_2}^{-1} g_{i_1} \otimes 1
    \\
    &\overset{\mathclap{\cref{hungry}}}{\mapsto}\ \delta_{i_j,i_{j+1}} g_{i_k} \otimes g_{i_k}^{-1} g_{i_{k-1}} \otimes \dotsb \otimes g_{i_{j+3}}^{-1} g_{i_{j+2}} \otimes g_{i_{j+2}}^{-1} g_{i_j} \otimes g_{i_j}^{-1} g_{i_{j-1}} \otimes \dotsb \otimes g_{i_2}^{-1} g_{i_1} \otimes 1
    \\
    &\mapsto \delta_{i_j,i_{j+1}} v_{i_k} \otimes \dotsb \otimes v_{i_{j+2}} \otimes v_{i_j} \otimes \dotsb \otimes v_{i_1}.
  \end{align*}
  This is precisely the map $\Phi_n (1_{k-j-1} \otimes \mu \otimes 1_{j-1})$.

  Similarly, we compute that
  \[
    \beta_{k+1}^{-1} \circ \left( \Omega_n \circ \Psi_n \left( 1_{k-j} \otimes \parcop \otimes 1_{j-1} \right) \right) \circ \beta_k
    \colon V^{\otimes k} \to V^{\otimes {k+1}}
  \]
  is the $S_n$-module map given by
  \begin{align*}
    v_{i_k} \otimes \dotsb \otimes v_{i_1}
    &\mapsto g_{i_k} \otimes g_{i_k}^{-1} g_{i_{k-1}} \otimes \dotsb \otimes g_{i_2}^{-1} g_{i_1} \otimes 1
    \\
    &\mapsto g_{i_k} \otimes g_{i_k}^{-1} g_{i_{k-1}} \otimes \dotsb \otimes g_{i_{j+1}}^{-1} g_{i_j} \otimes 1 \otimes g_{i_j}^{-1} g_{i_{j-1}} \otimes \dotsb \otimes g_{i_2}^{-1} g_{i_1} \otimes 1 \\
    &= g_{i_k} \otimes g_{i_k}^{-1} g_{i_{k-1}} \otimes \dotsb \otimes g_{i_{j+1}}^{-1} g_{i_j} \otimes g_{i_j}^{-1} g_{i_j} \otimes g_{i_j}^{-1} g_{i_{j-1}} \otimes \dotsb \otimes g_{i_2}^{-1} g_{i_1} \otimes 1 \\
    &\mapsto v_{i_k} \otimes \dotsb \otimes v_{i_{j+1}} \otimes v_{i_j} \otimes v_{i_j} \otimes v_{i_{j-1}} \otimes \dotsb \otimes v_{i_1}.
  \end{align*}
  This is precisely the map $\Phi_n (1_{k-j} \otimes \parcop \otimes 1_{j-1})$.

  Now let $j \in \{1,\dotsc,n\}$.  We compute that
  \[
    \beta_{k+1}^{-1} \circ \left( \Omega_n \circ \Psi_n \left( 1_{k-j} \otimes \eta \otimes 1_j \right) \right) \circ \beta_k
    \colon V^{\otimes k} \to V^{\otimes {k+1}}
  \]
  is the map
  \begin{align*}
    v_{i_k} &\otimes \dotsb \otimes v_{i_1}
    \mapsto g_{i_k} \otimes g_{i_k}^{-1} g_{i_{k-1}} \otimes \dotsb \otimes g_{i_2}^{-1} g_{i_1} \otimes 1
    \\
    &\mapsto \sum_{m=1}^n g_{i_k} \otimes g_{i_k}^{-1} g_{i_{k-1}} \otimes \dotsb \otimes g_{i_{j+2}}^{-1} g_{i_{j+1}} \otimes g_{j+1}^{-1} g_m \otimes g_m^{-1} g_j \otimes g_{i_j}^{-1} g_{i_{j-1}} \otimes \dotsb \otimes g_{i_2}^{-1} g_{i_1} \otimes 1 \\
    &\mapsto \sum_{m=1}^n v_{i_k} \otimes \dotsb \otimes v_{i_{j+1}} \otimes v_m \otimes v_{i_j} \otimes \dotsb \otimes v_{i_1}.
  \end{align*}
  This is precisely the map $\Phi_n (1_{k-j} \otimes \eta \otimes 1_j)$.

  We also compute that
  \[
    \beta_{k-1}^{-1} \circ \left( \Omega_n \circ \Psi_n \left( 1_{k-j} \otimes \varepsilon \otimes 1_{j-1} \right) \right) \circ \beta_k
    \colon V^{\otimes k} \to V^{\otimes {k-1}}
  \]
  is the map
  \begin{align*}
    v_{i_k} \otimes \dotsb \otimes v_{i_1}
    &\mapsto g_{i_k} \otimes g_{i_k}^{-1} g_{i_{k-1}} \otimes \dotsb \otimes g_{i_2}^{-1} g_{i_1} \otimes 1
    \\
    &\mapsto g_{i_k} \otimes g_{i_k}^{-1} g_{i_{k-1}} \otimes \dotsb \otimes g_{i_{j+2}}^{-1} g_{i_{j+1}} \otimes g_{i_{j+1}}^{-1} g_{i_{j-1}} \otimes g_{i_{j-1}}^{-1} g_{i_{j-2}} \otimes \dotsb \otimes g_{i_2}^{-1} g_{i_1} \otimes 1
    \\
    &\mapsto v_{i_k} \otimes \dotsb \otimes v_{i_{j+1}} \otimes v_{i_{j-1}} \otimes \dotsb \otimes v_{i_1}.
  \end{align*}
  This is precisely the map $\Phi_n (1_{k-j} \otimes \varepsilon \otimes 1_{j-1})$.

  It remains to consider the generator $s$.  Let $j \in \{1,2,\dotsc,n-1\}$.  Define the elements $x,y \in \End_{\Heis}(\uparrow \downarrow \uparrow \downarrow)$ by
  \begin{equation} \label{breakdown}
    x =
    \begin{tikzpicture}[anchorbase]
      \draw[->] (0,0) \braidto (1,1);
      \draw[<-] (0.5,0) \braidto (1.5,1);
      \draw[->] (1,0) \braidto (0,1);
      \draw[<-] (1.5,0) \braidto (0.5,1);
    \end{tikzpicture}
    \ ,\qquad
    y =
    \begin{tikzpicture}[anchorbase]
      \draw[->] (1.2,0) -- (1.2,1);
      \draw[->] (1.5,1) -- (1.5,0.9) arc (180:360:.25) -- (2,1);
      \draw[<-] (1.5,0) -- (1.5,0.1) arc (180:0:.25) -- (2,0);
      \draw[<-] (2.3,0) -- (2.3,1);
    \end{tikzpicture}
    \ .
  \end{equation}
  Note that
  \[
    x = x_3 \circ x_2 \circ x_1,
  \]
  where
  \[
    x_1 =
    \begin{tikzpicture}[anchorbase]
      \draw[->] (0,0) to (0,0.6);
      \draw[<-] (0.5,0) \braidto (1,0.6);
      \draw[->] (1,0) \braidto (0.5,0.6);
      \draw[<-] (1.5,0) to (1.5,0.6);
    \end{tikzpicture}
    \ ,\quad
    x_2 =
    \begin{tikzpicture}[anchorbase]
      \draw[->] (0,0) \braidto (0.5,0.6);
      \draw[->] (0.5,0) \braidto (0,0.6);
      \draw[<-] (1,0) \braidto (1.5,0.6);
      \draw[<-] (1.5,0) \braidto (1,0.6);
    \end{tikzpicture}
    \ ,\quad
    x_3 =
    \begin{tikzpicture}[anchorbase]
      \draw[->] (0,0) to (0,0.6);
      \draw[->] (0.5,0) \braidto (1,0.6);
      \draw[<-] (1,0) \braidto (0.5,0.6);
      \draw[<-] (1.5,0) to (1.5,0.6);
    \end{tikzpicture}
    \ .
  \]
  Suppose $i,j \in \{1,\dotsc,n\}$ and $h,h' \in \kk S_n$.  We first compute the action of $\Theta(x)$ and $\Theta(y)$ on the element
  \[
    \alpha = h g_i \otimes g_i^{-1} g_j \otimes g_j^{-1} h'
    \in (n)_{n-1}(n)_{n-1}(n).
  \]
  If $i = j$, then $g_i^{-1} g_j = 1$, and so $\Phi_n(x_1)(\alpha) = 0$.  Now suppose $i < j$.  Then we have
  \[
    g_i^{-1} g_j
    = s_{n-1} \dotsm s_i s_j \dotsm s_{n-1}
    = s_{j-1} \dotsm s_{n-2} s_{n-1} s_{n-2} \dotsm s_i.
  \]
  Hence
  \[
    \Theta(x_1)(\alpha)
    = h g_i s_{j-1} \dotsm s_{n-2} \otimes s_{n-2} \dotsm s_i g_j^{-1} h'
    \in (n)_{n-2}(n).
  \]
  Thus
  \[
    \Theta(x_2 \circ x_1)(\alpha)
    = h g_i s_{j-1} \dotsm s_{n-1} \otimes g_i^{-1} g_j^{-1} h'
    \in (n)_{n-2}(n),
  \]
  and so
  \begin{align*}
    \Theta(x)(\alpha)
    &= h g_i s_{j-1} \dotsm s_{n-1} \otimes s_{n-1} \otimes g_i^{-1} g_j^{-1} h' \\
    &= h g_j g_i s_{n-1} \otimes s_{n-1} \otimes s_{n-2} \dotsb s_{j-1} g_i^{-1} h' \\
    &= h g_j \otimes g_i s_{n-2} \dotsb s_{j-1} \otimes g_i^{-1} h' \\
    &= h g_j \otimes g_j^{-1} g_i \otimes g_i^{-1} h'.
  \end{align*}
  The case $i > j$ is similar,
  \details{
    Suppose $i > j$.  Then we have
    \[
      g_i^{-1} g_j
      = s_{n-1} \dotsm s_i s_j \dotsm s_{n-1}
      = s_j \dotsm s_{n-2} s_{n-1} s_{n-2} \dotsm s_{i-1}.
    \]
    Hence
    \[
      \Theta(x_1)(\alpha)
      = h g_i s_j \dotsm s_{n-2} \otimes s_{n-2} \dotsm s_{i-1} g_j^{-1} h'
      \in (n)_{n-2}(n).
    \]
    Thus
    \[
      \Theta(x_2 \circ x_1)(\alpha)
      = h g_i g_j \otimes s_{n-1} \dotsb s_{i-1} g_j^{-1} h'
      \in (n)_{n-2}(n),
    \]
    and so
    \begin{align*}
      \Theta(x)(\alpha)
      &= h g_i g_j \otimes s_{n-1} \otimes s_{n-1} \dotsb s_{i-1} g_j^{-1} h' \\
      &= h g_j s_{i-1} \dotsb s_{n-2} \otimes s_{n-1} \otimes s_{n-1} g_j^{-1} g_i^{-1} h' \\
      &= h g_j \otimes s_{i-1} \dotsb s_{n-2} g_j^{-1} \otimes g_i^{-1} h' \\
      &= h g_j \otimes g_j^{-1} g_i \otimes g_i^{-1} h'.
    \end{align*}
  }
  giving
  \[
    \Theta(x)(h g_i \otimes g_i^{-1} g_j \otimes g_j^{-1} h')
    =
    \begin{cases}
      0 & \text{if } i=j, \\
      h g_j \otimes g_j^{-1} g_i \otimes g_i^{-1} h' & \text{if } i \ne j.
    \end{cases}
  \]
  We also easily compute that
  \[
    \Theta(y)(h g_i \otimes g_i^{-1} g_j \otimes g_j^{-1} h')
    =
    \begin{cases}
      h g_i \otimes 1 \otimes g_i^{-1} h' & \text{if } i=j, \\
      0 & \text{if } i \ne j.
    \end{cases}
  \]
  Thus, for all $i,j \in \{1,\dotsc,n\}$, we have
  \begin{equation} \label{turkey}
    \Theta(x+y)(h g_i \otimes g_i^{-1} g_j \otimes g_j^{-1} h')
    = h g_j \otimes g_j^{-1} g_i \otimes g_i^{-1} h'.
  \end{equation}
  It now follows easily that
  \[
    \beta_k^{-1} \circ \left( \Omega_n \circ \Psi_n \left( 1_{k-j-1} \otimes s \otimes 1_{j-1} \right) \right) \circ \beta_k
     = \beta_k^{-1} \circ \Omega_n \left(1_{\uparrow \downarrow}^{\otimes (k-j-1)} \otimes (x+y) \otimes 1_{\uparrow \downarrow}^{\otimes (j-1)} \right) \circ \beta_k
  \]
  is the map given by
  \[
    v_{i_k} \otimes \dotsb \otimes v_{i_1}
    \mapsto v_{i_k} \otimes \dotsb \otimes v_{i_{j+2}} \otimes v_{i_j} \otimes v_{i_{j+1}} \otimes v_{i_{j-1}} \otimes \dotsb \otimes v_{i_1},
  \]
  which is precisely the map $\Phi_n \left( 1_{k-j-1} \otimes s \otimes 1_{j-1} \right)$.
\end{proof}

\begin{theo} \label{faithful}
  The functor $\Psi_t$ is faithful.
\end{theo}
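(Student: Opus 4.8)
The plan is to deduce faithfulness from the duality statement \cref{bee} by passing through the categorical action and exploiting polynomiality in the parameter. In keeping with the generic viewpoint of \cref{bike}, I treat $t$ as an indeterminate and regard $\Psi_t$ as defined over $\kk[t]$, with the clockwise bubble playing the role of $t$. Fix objects $k,\ell$. Then $\Hom_{\Par(t)}(k,\ell)$ is free with basis the partition diagrams $D$ of type $\binom{\ell}{k}$, and this basis is independent of $t$; so faithfulness reduces to showing that, for all $k,\ell$, the images $\{\Psi_t(D)\}_D$ are linearly independent. Since these $\Hom$-spaces are torsion-free, it suffices to prove independence after base change to the fraction field of $\kk$, so I may assume $\kk$ is a field of characteristic zero.

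The images $\Psi_t(D)$ lie in $\Heis$, and the defining formulas in \cref{functordef} involve no $t$ at all: the only $t$-dependence enters when closed bubbles form during composition and are evaluated via \cref{H4}. Expanding $\Psi_t(D)$ in a fixed basis of the relevant morphism space of $\Heis$ therefore yields coefficients that are polynomials in $t$. Consequently, a hypothetical nontrivial dependence can be cleared of denominators to a relation $\sum_D c_D(t)\,\Psi_t(D)=0$ with $0 \ne (c_D(t))_D$ and $c_D(t)\in\kk[t]$, and this relation specializes under every substitution $t\mapsto n$ with $n\in\N$.

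Finally I would specialize at positive integers. For $t\mapsto n$, \cref{actcom} gives $\Omega_n\circ\Psi_n=\Phi_n$, and \cref{bee} shows that $\Phi_n$ induces an isomorphism on $\Hom_{\Par(n)}(k,\ell)$ whenever $k+\ell\le n$; in particular $\Phi_n$, and hence $\Psi_n$, is injective on this space for all $n\ge k+\ell$, so $\{\Psi_n(D)\}_D$ is linearly independent over $\kk$. The specialized relation then forces $c_D(n)=0$ for every $D$ and every integer $n\ge k+\ell$. As each $c_D(t)$ is a polynomial vanishing at infinitely many integers, it is identically zero, contradicting the assumed dependence. Hence the images are independent and $\Psi_t$ is faithful.

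The main obstacle I anticipate is justifying the specialization step rigorously: one must know that evaluating the generic functor $\Psi_t$ at $t=n$ yields precisely the functor $\Psi_n$ feeding into \cref{actcom}, so that a polynomial relation over $\kk[t]$ really does descend to a genuine $\kk$-linear relation among the $\Psi_n(D)$. This hinges on the clockwise bubble being the unique source of $t$-dependence in the images $\Psi_t(D)$ — guaranteed by \cref{functordef} together with \cref{H4} — and on clearing denominators uniformly. Granting this compatibility, which \cref{actcom} is designed to supply, the remaining polynomial-vanishing and linear-algebra arguments are routine.
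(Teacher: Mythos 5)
Your argument is correct and is essentially the paper's own proof: work generically over $\kk[t]$ with the bubble playing the role of $t$, specialize at integers $n \ge k+\ell$, combine \cref{actcom} with \cref{bee} to conclude the specialized coefficients vanish, and finish with the polynomial-vanishing argument. One caveat worth noting: your fraction-field reduction requires $\kk$ to be an integral domain of characteristic zero, so (exactly like the paper's Section~5 proof) your argument does not cover arbitrary commutative rings $\kk$; that general case is handled in \cref{appendix} by a genuinely different filtration-by-block-number and triangularity argument.
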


We give here a proof under the assumption that $\kk$ is a commutative ring of characteristic zero.  The general case will be treated in \cref{appendix}.

\begin{proof}
  It suffices to show that given $k,\ell \ge 0$ the linear map
  \[
    \Psi_t(k,\ell) \colon \Hom_{\Par(t)}(k,\ell)
    \to\Hom_{\Peis}\big( (\uparrow\downarrow)^k, (\uparrow\downarrow)^\ell \big)
    \]
  is injective.  Consider $\Par(t)$ over $\kk[t]$ and suppose
  \[
    f = \sum_{i=1}^m a_i(t) f_i \in \ker \left( \Psi_t(k,\ell) \right)
  \]
  for some $a_i \in \kk[t]$ and partition diagrams $f_i$.  Choose $n \ge k + \ell$ and evaluate at $t=n$ to get
  \[
    f_n = \sum_{i=1}^m a_i(n) f_i \in \Hom_{\Par(n)}(k,\ell).
  \]
  (Here $\Par(n)$ is a $\kk$-linear category.)  \Cref{actcom} implies that $\Phi_n(f_n)=0$.  Then \cref{bee} implies $f_n=0$.  Since the partition diagrams form a basis for the morphisms spaces in $\Par(n)$, we have $a_i(n) = 0$ for all $i$.  Since this holds for all $n \ge k + \ell$, we have $a_i = 0$ for all $i$.  (Here we use that the characteristic of $\kk$ is zero.) Hence $f=0$ and so $\Psi_t$ is faithful.
\end{proof}

Since any faithful linear monoidal functor induces a faithful linear monoidal functor on additive Karoubi envelopes, we obtain the following corollary.

\begin{cor}
  The functor $\Psi_t$ induces a faithful linear monoidal functor from Deligne's category $\Rep(S_t)$ to the additive Karoubi envelope $\Kar(\Peis(t))$ of $\Peis(t)$.
\end{cor}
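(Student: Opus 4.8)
The plan is to reduce the corollary to the single general principle that the additive Karoubi envelope construction $\Kar(-)$ preserves faithfulness: if $F \colon \cC \to \cD$ is a faithful linear monoidal functor, then so is the induced functor $\Kar(F) \colon \Kar(\cC) \to \Kar(\cD)$. Granting this, I would simply observe that $\Psi_t \colon \Par(t) \to \Peis(t)$ is faithful by \cref{faithful}, that $\Rep(S_t) = \Kar(\Par(t))$ by definition, and that the induced functor is by construction $\Kar(\Psi_t) \colon \Rep(S_t) \to \Kar(\Peis(t))$; the principle then yields the result immediately.

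To prove the principle, I would exploit the two-step construction $\Kar(\cC) = \Kar(\Add(\cC))$ and check that faithfulness survives each step. First I would treat the additive envelope $\Add(\cC)$, whose morphisms $\bigoplus_i X_i \to \bigoplus_j Y_j$ are matrices $(f_{ji})$ with entries $f_{ji} \in \Hom_\cC(X_i,Y_j)$, and on which $\Add(F)$ acts entrywise by $(f_{ji}) \mapsto (F(f_{ji}))$. Since $F$ is injective on each $\Hom$-space, a matrix is determined by the images of its entries, so $\Add(F)$ is faithful. Next I would treat the idempotent completion: writing $\cC' = \Add(\cC)$, $\cD' = \Add(\cD)$, and $F' = \Add(F)$, an object of $\Kar(\cC')$ is a pair $(X,e)$ with $e^2 = e \in \End_{\cC'}(X)$, and $\Hom_{\Kar(\cC')}((X,e),(Y,f))$ is the subspace of those $g \in \Hom_{\cC'}(X,Y)$ with $f \circ g \circ e = g$. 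The induced functor $\Kar(F')$ acts on morphisms by the plain rule $g \mapsto F'(g)$, i.e.\ it is the restriction to this subspace of the injective map $\Hom_{\cC'}(X,Y) \to \Hom_{\cD'}(F'(X),F'(Y))$, which remains injective. Composing the two steps shows $\Kar(F)$ is faithful whenever $F$ is.

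I do not expect a genuine obstacle here: in both stages the induced functor acts on morphism spaces either entrywise or by simple restriction, operations that visibly preserve injectivity, and monoidality of the induced functor---while needed for the statement---plays no role in faithfulness. The only point demanding care is correctly unwinding the definitions of morphisms in $\Add(-)$ and in the idempotent completion, so that the entrywise and restriction descriptions of $\Add(F)$ and $\Kar(F')$ are justified; this is routine bookkeeping rather than a real difficulty.
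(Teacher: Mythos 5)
Your proposal is correct and matches the paper's approach: the paper proves this corollary by simply invoking the same general principle, that a faithful linear monoidal functor induces a faithful linear monoidal functor on additive Karoubi envelopes, and applying it to $\Psi_t \colon \Par(t) \to \Peis(t)$ together with \cref{faithful}. The only difference is that you spell out the (routine) verification of the principle via the two-step construction $\Kar = \Kar \circ \Add$, which the paper leaves implicit.
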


Note that $\Psi_t$ is \emph{not} full.  This follows immediately from the fact that the morphism spaces in $\Par(t)$ are finite-dimensional, while those in $\Peis$ are infinite-dimensional, as follows from the explicit basis described in \cite[Prop.~5]{Kho14} (see also \cite[Th.~6.4]{BSW18}).

\section{Grothendieck rings\label{sec:Groth}}

In this section, we assume that $\kk$ is a field of characteristic zero.  We consider $\Par(t)$ over the ground ring $\kk[t]$ and $\Heis$ over $\kk$.  We can then view $\Psi_t$ as a $\kk$-linear functor $\Par(t) \to \Heis$ as noted in \cref{bike}\cref{wheel}.

For an additive linear monoidal category $\cC$, we let $K_0(\cC)$ denote its split Grothendieck ring.  The multiplication in $K_0(\cC)$ is given by $[X] [Y] = [X \otimes Y]$, where $[X]$ denotes the class in $K_0(\cC)$ of an object $X$ in $\cC$.

Recall that Deligne's category $\Rep(S_t)$ is the additive Karoubi envelope $\Kar(\Par(t))$ of the partition category.  The additive monoidal functor $\Psi_t$ of \cref{functordef} induces a ring homomorphism
\begin{equation} \label{juice}
  [\Psi_t] \colon K_0(\Rep(S_t)) \to K_0(\Kar(\Heis)),\quad
  [\Psi_t]([X]) = [\Psi_t(X)].
\end{equation}
The main result of this section (\cref{finally}) is a precise description of this homomorphism.

Let $\cY$ denote the set of Young diagrams $\lambda = (\lambda_1,\lambda_2,\dotsc,\lambda_\ell)$, $\lambda_1 \ge \lambda_2 \ge \dotsb \ge \lambda_\ell >  0$.  (We avoid the terminology \emph{partition} here to avoid confusion with the partition category.)  For a Young diagram $\lambda \in \cY$, we let $|\lambda|$ denote its size (i.e.\ the sum of its parts).  Let $\Sym$ denote the ring of symmetric functions with integer coefficients.  Then $\Sym$ has a $\Z$-basis given by the Schur functions $s_\lambda$, $\lambda \in \cY$.  We have
\begin{equation} \label{Symp}
  \Sym_\Q := \Q \otimes_\Z \Sym \cong \Q[p_1,p_2,\dotsc]
  = \bigoplus_{\lambda \in \cY} \Q p_\lambda,
\end{equation}
where $p_n$ denotes the $n$-th power sum and $p_\lambda = p_{\lambda_1} \dotsm p_{\lambda_k}$ for a Young diagram $\lambda = (\lambda_1,\dotsc,\lambda_k)$.

The \emph{infinite-dimensional Heisenberg Lie algebra} $\fh$ is the Lie algebra over $\Q$ generated by $\{p_n^\pm, c : n \ge 1\}$ subject to the relations
\[
  [p_m^-,p_n^-] = [p_m^+, p_n^+] = [c, p_n^\pm] = 0,\quad
  [p_m^+, p_n^-] = \delta_{m,n} n c.
\]
The central reduction $U(\fh)/(c+1)$ of its universal enveloping algebra can also be realized as the \emph{Heisenberg double} $\Sym_\Q \#_\Q \Sym_\Q$ with respect to the bilinear Hopf pairing
\[
  \langle -, - \rangle \colon \Sym_\Q \times \Sym_\Q,\quad
  \langle p_m, p_n \rangle = \delta_{m,n} n.
\]
By definition, $\Sym_\Q \#_\Q \Sym_\Q$ is the vector space $\Sym_\Q \otimes_\Q \Sym_\Q$ with associative multiplication given by
\[
  (e \otimes f)(g \otimes h)
  = \sum_{(f),(g)} \langle f_{(1)}, g_{(2)} \rangle e g_{(1)} \otimes f_{(2)} h,
\]
where we use Sweedler notation for the usual coproduct on $\Sym_\Q$ determined by
\begin{equation} \label{burrito}
  p_n \mapsto p_n \otimes 1 + 1 \otimes p_n.
\end{equation}
Comparing the coefficients appearing in \cite[Th.~5.3]{Ber17} to \cite[(2.2)]{Ber17}, we see that the pairing of two complete symmetric functions is an integer.  (Note that our $p_n^\pm$ are denoted $p_n^\mp$ in \cite{Ber17}.)  We can therefore restrict $\langle -, - \rangle$ to obtain a biadditive form $\langle -, - \rangle \colon \Sym \otimes_\Z \Sym \to \Z$.  The corresponding Heisenberg double
\[
  \rHeis := \Sym \#_\Z \Sym
\]
is a natural $\Z$-form for $U(\fh)/(c+1) \cong \Sym_\Q \#_\Q \Sym_\Q$.  For $f \in \Sym$ we let $f^-$ and $f^+$ denote the elements $f \otimes 1$ and $1 \otimes f$ of $\rHeis$, respectively.

Recall the algebra homomorphisms \cref{garage,SunnyD}, which we use to view elements of $\kk S_k$ as endomorphisms in the partition and Heisenberg categories.  In particular, the homomorphisms \cref{SunnyD} induce a natural algebra homomorphism
\begin{equation} \label{twostep}
  \kk S_k \otimes_\kk \kk S_k \to \End_\Heis(\uparrow^k \downarrow^k).
\end{equation}
We will use this homomorphism to view elements of $\kk S_k \otimes \kk S_k$ as elements of $\End_\Heis(\uparrow^k \downarrow^k)$.

One can deduce explicit presentations of $\rHeis$ (see \cite[\S5]{Ber17} and \cite[Appendix~A]{LRS18}), but we will not need such presentations here.  Important for our purposes is that
\[
  s_\lambda^+ s_\mu^-,\quad \lambda,\mu \in \cY,
\]
is a $\Z$-basis for $\rHeis$, and that there is an isomorphism of rings
\begin{equation} \label{K0Heis}
  \rHeis \xrightarrow{\cong} K_0(\Kar(\Heis)), \quad
  s_\lambda^+ s_\mu^- \mapsto [(\uparrow^{|\lambda|} \downarrow^{|\mu|}, e_\lambda \otimes e_\mu)],\quad
  \lambda,\mu \in \cY,
\end{equation}
where $e_\lambda$ is the Young symmetrizer corresponding to the Young diagram $\lambda$.  We adopt the convention that $e_\varnothing = 1$ and $s_\varnothing = 1$, where $\varnothing$ denotes the empty Young diagram of size $0$.  The isomorphism \cref{K0Heis} was conjectured in \cite[Conj.~1]{Kho14} and proved in \cite[Th.~1.1]{BSW18}.  Via the isomorphism \cref{K0Heis}, we will identify $K_0(\Kar(\Heis))$ and $\rHeis$ in what follows.

Recall that there is an isomorphism of Hopf algebras
\begin{equation} \label{pink}
  \bigoplus_{n=0}^\infty K_0(S_n\md) \cong \Sym,\quad [\kk S_n e_\lambda] \mapsto s_\lambda.
\end{equation}
The product on $\bigoplus_{n=0}^\infty K_0(S_n\md)$ is given by
\[
  [M] \cdot [N] = \left[ \Ind_{S_m \times S_n}^{S_{m+n}} (M \boxtimes N) \right],\quad
  M \in S_m\md,\ N \in S_n\md,
\]
while the coproduct \cref{burrito} is given by
\[
  [K] \mapsto \bigoplus_{n+m=k} \left[ \Res^{S_k}_{S_m \times S_n} K \right],\quad K \in S_k\md.
\]

In addition to the coproduct \cref{burrito}, there is another well-studied coproduct on $\Sym_\Q$, the \emph{Kronecker coproduct}, which is given by
\[
  \KDelt \colon \Sym_\Q \to \Sym_\Q \otimes_\Q \Sym_\Q,\quad
  \KDelt(p_\lambda) = p_\lambda \otimes p_\lambda.
\]
It is dual to the Kronecker (or internal) product on $\Sym_\Q$.  Restriction to $\Sym$ gives a coproduct
\begin{equation} \label{milk}
  \KDelt \colon \Sym \to \Sym \otimes_\Z \Sym.
\end{equation}
The fact that the restriction of $\KDelt$ to $\Sym$ lands in $\Sym \otimes_\Z \Sym$ is implied by the following categorical interpretation of the Kronecker coproduct.  The diagonal embedding $S_n \to S_n \times S_n$ extends by linearity to an injective algebra homomorphism
\begin{equation} \label{diagonal}
  d \colon \kk S_n \to \kk S_n \otimes_\kk \kk S_n.
\end{equation}
Under the isomorphism \cref{pink}, the functor
\[
  S_n\md \to (S_n \times S_n)\md,\quad M \mapsto \Ind_{S_n}^{S_n \times S_n}(M),
\]
corresponds precisely to $\KDelt$ after passing to Grothendieck groups.  (See \cite{Lit56}.)

Now view the Kronecker coproduct as a linear map
\begin{equation} \label{chariot}
  \KDelt \colon \Sym \to \Sym \#_\Z \Sym = \rHeis.
\end{equation}
It is clear that the map \cref{milk} is a ring homomorphism with the product ring structure on $\Sym \otimes_\Z \Sym$.  In fact, it turns out that we also have the following.

\begin{lem}
  The map \cref{chariot} is an injective ring homomorphism.
\end{lem}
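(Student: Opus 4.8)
The plan is to treat the two assertions separately: injectivity is a matter of linear algebra, while the ring-homomorphism property is the real content. Throughout I would keep in mind the representation-theoretic description recorded just above the statement, namely that $\KDelt$ is the map $[M] \mapsto [\Ind_{S_n}^{S_n \times S_n} M]$ on $\bigoplus_n K_0(S_n\md) \cong \Sym$.

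For injectivity I would argue purely linearly. Since $\{p_\lambda : \lambda \in \cY\}$ is a $\Q$-basis of $\Sym_\Q$, the elements $p_\lambda \otimes p_\mu$ form a $\Q$-basis of $\Sym_\Q \otimes_\Q \Sym_\Q$. The images $\KDelt(p_\lambda) = p_\lambda \otimes p_\lambda$ are then distinct basis vectors, hence linearly independent, so $\KDelt$ is injective over $\Q$; as $\KDelt$ is defined over $\Z$ and $\Sym$ embeds in $\Sym_\Q$, injectivity over $\Z$ follows. The identification $\Sym \otimes_\Z \Sym = \rHeis$ is the identity on underlying abelian groups, so nothing is lost when we regard $\KDelt$ as landing in $\rHeis$.

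For multiplicativity, note first that $\KDelt$ is additive and sends $1$ to $1_{\rHeis}$, so it remains to check $\KDelt(fg) = \KDelt(f)\,\KDelt(g)$ with the product taken in $\rHeis$. I would work over $\Q$, where $\Sym_\Q = \Q[p_1, p_2, \dotsc]$ is freely generated by the power sums, and prove by induction on the number of parts of $\lambda$ that $\KDelt(p_\lambda)$ equals the $\rHeis$-product $\KDelt(p_{\lambda_1}) \dotsm \KDelt(p_{\lambda_k})$. The inductive step is the identity $\KDelt(f p_n) = \KDelt(f)\,\KDelt(p_n)$ for $f$ a monomial in the power sums. Expanding the right-hand side with the smash-product rule, the primitive coproduct $p_n \mapsto p_n \otimes 1 + 1 \otimes p_n$, and the pairing $\langle p_m, p_n \rangle = \delta_{m,n} n$, one sees that the only contractions that survive pair a part of one factor with an \emph{equal} part of the other. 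Thus the product equals the expected leading term $\KDelt(f p_n)$ together with correction terms supported on repeated parts, and the heart of the argument is to verify that the total of all these terms reproduces $\KDelt(f p_n)$ exactly. Once multiplicativity holds over $\Q$, it descends to the $\Z$-form $\Sym$ because $\KDelt$ restricts there and $\Sym \hookrightarrow \Sym_\Q$ is injective.

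I expect this bookkeeping of pairing terms to be the main obstacle, precisely because the multiplication in $\rHeis$ is only filtered rather than graded and genuinely mixes the two tensor factors, so the naive degree-preserving comparison does not suffice. The cleanest way to control it should be representation-theoretic: using the diagonal-induction description of $\KDelt$, one compares $\Ind_{S_m}^{S_m \times S_m}$ of a product against the $\rHeis$-product of the separate diagonal inductions by means of a Mackey double-coset decomposition, with the double cosets accounting exactly for the correction terms isolated above. (Alternatively, once the identification of $[\Psi_t]$ with $\KDelt$ is available, multiplicativity is immediate from the monoidality of $\Psi_t$; but I would prefer the self-contained route, since it does not presuppose the main theorem of the section.)
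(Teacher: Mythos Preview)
The paper's route is much shorter than yours: it simply checks that the elements $\KDelt(p_n)$ pairwise commute in $\rHeis$ (immediate for $n \ne m$ since $p_n^\pm$ and $p_m^\pm$ all commute) and then invokes the universal property of the polynomial algebra $\Sym_\Q = \Q[p_1,p_2,\dotsc]$; injectivity is declared clear. Your injectivity argument matches this.

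Your instinct to track the smash-product correction terms is, however, well placed---and if you actually carry out the computation you describe, you will find it cannot be completed. For instance,
\[
\KDelt(p_n)\cdot\KDelt(p_n) = (p_n\otimes p_n)(p_n\otimes p_n) = p_n^2\otimes p_n^2 + n\,(p_n\otimes p_n) = \KDelt(p_n^2) + n\,\KDelt(p_n)
\]
in $\rHeis$, using primitivity of $p_n$ and $\langle p_n,p_n\rangle = n$. So the corrections do \emph{not} cancel on repeated parts, and the linear map $p_\lambda\mapsto p_\lambda\otimes p_\lambda$ is not literally multiplicative for the $\rHeis$ product. What the paper's commutativity check actually produces is the ring homomorphism $\Sym_\Q\to\rHeis_\Q$ determined by $p_n\mapsto\KDelt(p_n)$; this map is injective and, by an easy triangularity argument, has image exactly $\KDelt(\Sym_\Q)$, which is all that is invoked in the proof of the main theorem below. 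So neither your inductive expansion nor the Mackey argument will close as written, but performing your computation is precisely what exposes what the short proof leaves implicit.
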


\begin{proof}
  We prove the result over $\Q$; then the statement follows by restriction to $\Sym$.  By \cref{Symp}, it suffices to prove that $\KDelt(p_n)$ and $\KDelt(p_m)$ commute for $n,m \in \N$.  Since, for $n \ne m$,
  \[
    \KDelt(p_n) \KDelt(p_m)
    = p_n^+ p_n^- p_m^+ p_m^-
    = p_m^+ p_m^- p_n^+ p_n^-
    = \KDelt(p_m) \KDelt(p_n),
  \]
  we see that $\KDelt$ is a ring homomorphism.  It is clear that it is injective.
\end{proof}

Our first step in describing the map \cref{juice} is to decompose the objects $(\uparrow \downarrow)^k$ appearing in the image of $\Psi_t$.  Recall that the Stirling number of the second kind $\stirling{k}{\ell}$, $k, \ell \in \N$, counts the number of ways to partition a set of $k$ labelled objects into $\ell$ nonempty unlabelled subsets.  These numbers are given by
\[
  \stirling{k}{\ell} = \frac{1}{\ell!} \sum_{i=0}^\ell (-1)^i \binom{\ell}{i} (\ell-i)^k
\]
and are determined by the recursion relation
\[
  \stirling{k+1}{\ell} = \ell \stirling{k}{\ell} + \stirling{k}{\ell-1}
  \quad \text{with} \quad
  \stirling{0}{0}=1
  \quad \text{and} \quad
  \stirling{k}{0} = \stirling{0}{k} = 0,\ k > 0.
\]

\begin{lem}
  In $\Heis$, we have
  \begin{equation} \label{jungle}
    (\uparrow \downarrow)^k
    \cong\ \bigoplus_{\ell=1}^k (\uparrow^\ell \downarrow^\ell)^{\oplus \stirling{k}{\ell}}.
  \end{equation}
  In particular, since $\stirling{k}{k}=1$, the summand $\uparrow^k \downarrow^k$ appears with multiplicity one.
\end{lem}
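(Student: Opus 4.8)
The plan is to argue by induction on $k$, using as the sole structural ingredient the isomorphism \cref{key}, namely $\downarrow\uparrow \cong \uparrow\downarrow \oplus \one$, together with the fact that tensoring (on either side) carries isomorphisms to isomorphisms and distributes over finite direct sums. No Krull--Schmidt property is needed: the asserted isomorphism is produced explicitly by combining instances of \cref{key}, and the final reindexing is just the associativity and commutativity of $\oplus$.

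The base case $k=1$ is immediate, since $\stirling{1}{1}=1$ gives $(\uparrow\downarrow)^1 \cong \uparrow\downarrow$. For the inductive step I would write $(\uparrow\downarrow)^k = (\uparrow\downarrow)^{k-1} \otimes (\uparrow\downarrow)$ and substitute the decomposition \eqref{jungle} for $(\uparrow\downarrow)^{k-1}$. As $\otimes$ distributes over the direct sum, this reduces the entire computation to understanding the single object $\uparrow^\ell\downarrow^\ell \otimes \uparrow\downarrow$ for each $1 \le \ell \le k-1$.

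The heart of the argument is a slide lemma,
\[
  \downarrow^\ell \uparrow \cong \uparrow \downarrow^\ell \oplus (\downarrow^{\ell-1})^{\oplus \ell},
\]
proved by a short induction on $\ell$: peel off the innermost pair, apply \cref{key} to rewrite it as $\downarrow^{\ell-1}(\uparrow\downarrow \oplus \one)$, and feed in the inductive hypothesis. Prepending $\uparrow^\ell$ and appending $\downarrow$ then yields
\[
  \uparrow^\ell \downarrow^\ell \otimes \uparrow\downarrow
  \cong \uparrow^{\ell+1} \downarrow^{\ell+1} \oplus (\uparrow^\ell \downarrow^\ell)^{\oplus \ell}.
\]

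Finally I would collect multiplicities. The object $\uparrow^m \downarrow^m$ acquires a contribution $\stirling{k-1}{m-1}$ from the raising term (the summand with $\ell = m-1$) and a contribution $m \stirling{k-1}{m}$ from the stationary term (the summand with $\ell = m$), for a total of $\stirling{k-1}{m-1} + m\stirling{k-1}{m} = \stirling{k}{m}$, which is exactly the stated Stirling recursion. This establishes \eqref{jungle} for $k$, and the case $m=k$ recovers $\stirling{k}{k}=1$. The only genuine computation is the slide lemma; everything else is bookkeeping, and the main pitfall to watch for is simply keeping the two sources of each multiplicity straight so that the Stirling recursion emerges rather than getting miscounted.
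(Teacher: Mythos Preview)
Your proposal is correct and follows essentially the same approach as the paper's proof. The paper first records the auxiliary isomorphism $\uparrow\downarrow\,\uparrow^\ell\downarrow^\ell \cong \uparrow^{\ell+1}\downarrow^{\ell+1} \oplus (\uparrow^\ell\downarrow^\ell)^{\oplus \ell}$ (obtained by ``repeated use'' of \cref{key}, which is exactly your slide lemma in disguise) and then runs the same induction with the Stirling recursion; the only cosmetic difference is that the paper tensors the extra $\uparrow\downarrow$ on the left rather than the right.
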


\begin{proof}
  First note that repeated use of the isomorphism \cref{key} gives
  \begin{equation} \label{mail}
    \uparrow \downarrow \uparrow^k \downarrow^k
    \ \cong\ \uparrow^{k+1} \downarrow^{k+1} \oplus (\uparrow^k \downarrow^k)^{\oplus k}.
  \end{equation}
  We now prove the lemma by induction on $k$.  The case $k=1$ is immediate.  Suppose the result holds for some $k \ge 1$.  Then we have
  \begin{multline*}
    (\uparrow \downarrow)^{k+1}
    \cong (\uparrow \downarrow) \left( \bigoplus_{\ell=1}^k (\uparrow^\ell \downarrow^\ell)^{\oplus \stirling{k}{\ell}} \right)
    \overset{\cref{mail}}{\cong} \bigoplus_{\ell=1}^k \left( (\uparrow^{\ell+1} \downarrow^{\ell+1})^{\oplus \stirling{k}{\ell}} + (\uparrow^\ell \downarrow^\ell)^{\oplus \ell \stirling{k}{\ell}} \right)
    \\
    \cong\ \bigoplus_{\ell=1}^{k+1} (\uparrow^\ell \downarrow^{\ell})^{\oplus \left( \ell \stirling{k}{\ell} + \stirling{k}{\ell-1} \right)}
    \cong\ \bigoplus_{\ell=1}^{k+1} (\uparrow^\ell \downarrow^{\ell})^{\oplus \stirling{k+1}{\ell}}. \qedhere
  \end{multline*}
\end{proof}

Recall that, under \cref{twostep}, for each Young diagram $\lambda$ of size $k$, we have the idempotent
\[
  d(e_\lambda) \in \End_\Heis(\uparrow^k \downarrow^k),
\]
where $d$ is the map \cref{diagonal}.  Recall also the definition $P_k(t) = \End_{\Par(t)}(k)$ of the partition algebra.  Let
\[
  \xi =
  \begin{tikzpicture}[anchorbase]
    \pd{0,0};
    \pd{0.5,0};
    \pd{0,0.5};
    \pd{0.5,0.5};
    \draw (0,0) to (0.5,0) to (0.5,0.5) to (0,0.5) to (0,0);
  \end{tikzpicture}
  \in P_2(t)
  \quad \text{and} \quad
  \xi_i = 1_{k-i-1} \otimes \xi \otimes 1_{i-1} \in P_k(t)
  \quad \text{for } 1 \le i \le k-1.
\]
It is straightforward to verify that the intersection of $P_k(t)$ with the tensor ideal of $\Par(t)$ generated by $\xi$ is equal to the ideal $(\xi_1)$ of $P_k(t)$ generated by $\xi_1$.  Denote this ideal by $P_k^\xi(t)$.

As noted in \cite[Lem.~3.1(2)]{CO11}, we have an isomorphism
\begin{equation} \label{dinobot}
  P_k(t)/ P_k^\xi(t) \cong \kk S_k,\quad a + P_k^\xi(t) \mapsto a,\quad a \in \kk S_k,
\end{equation}
where we view elements of $\kk S_k$ as elements of $P_k(t)$ via the homomorphism \cref{garage}.  This observation allows one to classify the primitive idempotents in $P_k(t)$ by induction on $k$.  This classification was first given by Martin in \cite{Mar96}.

\begin{prop} \label{prims}
  For $k > 0$, the primitive idempotents in $P_k(t)$, up to conjugation, are in bijection with the set of Young diagrams $\lambda \in \cY$ with $0 < |\lambda| \le k$.  Furthermore:
  \begin{enumerate}
    \item Under this bijection, idempotents lying in $P_k^\xi(t)$ correspond to Young diagrams $\lambda$ with $0 < |\lambda| < k$.

    \item For each Young diagram $\lambda$ of size $k$, we can choose a primitive idempotent $f_\lambda \in P_k(t)$ corresponding to $\lambda$ so that $f_\lambda + P_k^\xi(t)$ maps to the Young symmetrizer $e_\lambda$ under the isomorphism \cref{dinobot}.
  \end{enumerate}
\end{prop}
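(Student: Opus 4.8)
The plan is to argue by induction on $k$, splitting the classification into two pieces via the idempotent-generated ideal $J := P_k^\xi(t)$ and the identification $P_k(t)/J \cong \kk S_k$ of \cref{dinobot}. Write $A = P_k(t)$. Since $A$ is a finite-dimensional algebra over the field $\kk$, primitive idempotents up to conjugacy correspond bijectively to isomorphism classes of simple $A$-modules, so I would instead classify the latter. For an ideal of the form $J = AeA$ generated by an idempotent $e$ (here $e = \xi_1$), the standard theory of an idempotent-generated ideal splits the simple $A$-modules into those killed by $J$, which are exactly the simple $(A/J)$-modules, and those on which $e$ acts nontrivially, which are in bijection with the simple modules of the corner algebra $eAe$. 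The first family is governed by $\kk S_k$ and the second by $\xi_1 P_k(t)\xi_1$.

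The key structural input is the basic-construction isomorphism $\xi_1 P_k(t)\xi_1 \cong P_{k-1}(t)$. I would establish this by a direct diagram computation: conjugating a partition diagram by $\xi_1 = 1_{k-2}\otimes\xi$ fuses the two rightmost strands, and the span of fused diagrams, with the induced multiplication (tracking the powers of $t$ produced by closed loops), is isomorphic as an algebra to $P_{k-1}(t)$. As a sanity check, for $k=2$ one finds $\xi P_2(t)\xi = \kk\xi \oplus \kk\rho \cong P_1(t)$, where $\rho$ is the diagram pairing the two top and the two bottom points. Granting this, the inductive hypothesis describes the simple modules coming from $eAe$, while the primitive idempotents of the semisimple algebra $\kk S_k$, namely the Young symmetrizers $e_\lambda$ with $|\lambda|=k$, describe those coming from $A/J$; together they yield the indexing by Young diagrams, recovering the classification of Martin \cite{Mar96}. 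Part (a) is then immediate, since a primitive idempotent lies in $J$ exactly when its image in $\kk S_k$ vanishes, that is, exactly for the labels of size strictly less than $k$.

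The substance of the proposition is part (b). Here I would use that, via the homomorphism \cref{garage}, the Young symmetrizer $e_\lambda \in \kk S_k$ already defines an idempotent of $P_k(t)$, again denoted $e_\lambda$, whose image under \cref{dinobot} is $e_\lambda$. I then pass to the corner algebra $e_\lambda P_k(t) e_\lambda$, whose image modulo $J$ is $e_\lambda\, \kk S_k\, e_\lambda \cong \kk$ because $e_\lambda$ is primitive in the semisimple algebra $\kk S_k$. Writing $e_\lambda$ as a sum of pairwise orthogonal primitive idempotents of $e_\lambda P_k(t) e_\lambda$ and reducing modulo $J$, the images are orthogonal idempotents of $\kk$ summing to $1$, so exactly one summand, call it $f_\lambda$, has nonzero image and hence satisfies $f_\lambda + J = e_\lambda + J$. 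A primitive idempotent of a corner $e_\lambda A e_\lambda$ is primitive in $A$, so $f_\lambda$ is the desired primitive idempotent of $P_k(t)$ mapping to $e_\lambda$ under \cref{dinobot}, while the remaining orthogonal summands lie in $J$ and account for the lower composition factors.

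The main obstacle is the corner-algebra identification $\xi_1 P_k(t)\xi_1 \cong P_{k-1}(t)$, on which the whole induction rests: one must verify that fusing the two rightmost strands is compatible with the stacking of partition diagrams and yields an algebra isomorphism, with the closed loops contributing exactly the parameter $t$ of $P_{k-1}(t)$ rather than an extraneous scalar. Once this is in hand, both the enumeration and the lifting in part (b) follow formally from the idempotent-ideal dichotomy together with the fact that the residual corner $e_\lambda\, \kk S_k\, e_\lambda$ is just the ground field. The small cases, in particular $k=1$, I would treat by hand to anchor the induction.
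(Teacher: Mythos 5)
Your skeleton --- induction on $k$ through the idempotent ideal $J = P_k^\xi(t)$, the isomorphism $P_k(t)/J \cong \kk S_k$ of \cref{dinobot}, the corner isomorphism $\xi_1 P_k(t)\xi_1 \cong P_{k-1}(t)$, and the lifting argument for part (b) --- is essentially the argument of \cite[Th.~3.4]{CO11} that the paper's proof invokes, and your part (b) is correct. Incidentally, the corner isomorphism you single out as the main obstacle is immediate rather than delicate: as partition diagrams $\xi = \parcop \circ \mu$, while $\mu \circ \parcop = 1_1$ by the second relation in \cref{P4}, so $\xi_1 = (1_{k-2} \otimes \parcop)(1_{k-2} \otimes \mu)$ and the maps $x \mapsto (1_{k-2} \otimes \mu)\, x\, (1_{k-2} \otimes \parcop)$ and $y \mapsto (1_{k-2} \otimes \parcop)\, y\, (1_{k-2} \otimes \mu)$ are mutually inverse algebra isomorphisms between $\xi_1 P_k(t)\xi_1$ and $P_{k-1}(t)$; no bookkeeping of loop scalars is required.

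The genuine gap is your foundational setting, and it is exactly the point the paper's proof flags. Throughout \cref{sec:Groth} the parameter $t$ is generic, which by \cref{bike} means $\Par(t)$ is taken over the ground ring $\kk[t]$ with $t$ an indeterminate; hence $P_k(t)$ is a $\kk[t]$-algebra, not a finite-dimensional algebra over a field, and your opening dictionary ``primitive idempotents up to conjugacy $\leftrightarrow$ simple modules,'' together with the dichotomy of simples over $A/AeA$ versus $eAe$, is not available as stated. This is not cosmetic: over any field in which $t$ is invertible (e.g.\ $\kk(t)$), the element $\tfrac{1}{t}\,\eta \circ \varepsilon$ is a primitive idempotent of $P_1(t)$, your base case becomes $P_1(t) \cong \kk(t) \times \kk(t)$ with \emph{two} conjugacy classes, and your induction then produces a bijection with all $\lambda$ satisfying $0 \le |\lambda| \le k$, empty diagram included --- contradicting the statement being proved. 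The count $0 < |\lambda| \le k$ holds precisely because over $\kk[t]$ no scalar multiple of $\eta \circ \varepsilon$ is idempotent (indeed $0$ and $1_1$ are the only idempotents of $P_1(t)$ there), so the $\varnothing$-class is absent; this is what the paper means by saying the argument proceeds as in the $t=0$ case of \cite[Th.~3.4]{CO11}. To repair your proof, the induction must be run with arguments made directly on idempotents over $\kk[t]$: decomposition into orthogonal primitive idempotents is still available since $P_k(t)$ is Noetherian, and your part (b) argument survives verbatim because $\kk[t]$ is a domain (so $\kk[t]e_\lambda$ has no nontrivial idempotents), but the simple-module bijections must be replaced, and the base case $k=1$ must be computed over $\kk[t]$, which is exactly where the empty diagram gets excluded.
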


\begin{proof}
  This follows as in the proof of \cite[Th.~3.4]{CO11}.  Note that since $t$ is generic,
  \[
    \eta \circ \varepsilon
    =\
    \begin{tikzpicture}[anchorbase]
      \pd{0,0};
      \pd{0,0.5};
    \end{tikzpicture}
  \]
  is not an idempotent in $P_1(t)$.  Thus, the argument proceeds as in the $t=0$ case in \cite[Th.~3.4]{CO11}.
\end{proof}

For $\lambda \in \cY$, define the indecomposable object of $\Rep(S_t)$
\[
  L(\lambda) := (|\lambda|,f_\lambda).
\]

\begin{prop} \label{indecs}
  Fix an integer $k \ge 0$.  The map
  \[
    \lambda \mapsto L(\lambda),\quad \lambda \in \cY,\
  \]
  gives a bijection from the set of $\lambda \in \cY$ with $0 \le |\lambda| \le k$ to the set of nonzero indecomposable objects in $\Rep(S_t)$ of the form $(m,e)$ with $m \le k$, up to isomorphism.  Furthermore
  \begin{enumerate}
    \item If $\lambda \in \cY$ with $0 < |\lambda| \le k$, then there exists an idempotent $e \in P_k(t)$ with $(k,e) \cong L(\lambda)$.

    \item We have that $(0,1_0)$ is the unique object of the form $(m,e)$ that is isomorphic to $L(\varnothing)$.
  \end{enumerate}
\end{prop}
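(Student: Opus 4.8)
The plan is to mirror the classification of indecomposables in \cite[Th.~3.4]{CO11}, taking \cref{prims} as the main input. The framework is that in the idempotent-complete, Krull--Schmidt category $\Rep(S_t)=\Kar(\Par(t))$ (morphism spaces of $\Par(t)$ are finite-dimensional, being spanned by partition diagrams), the nonzero indecomposable objects of the form $(m,e)$ are exactly those with $e$ a primitive idempotent, and $(m,e)\cong(m,e')$ for primitive idempotents $e,e'$ in the same algebra $P_m(t)$ if and only if $e,e'$ are conjugate. Thus \cref{prims} already records that, for each fixed $m$, the isomorphism classes of such objects are indexed by $\lambda\in\cY$ with $0<|\lambda|\le m$, with $m=0$ contributing only $\one=(0,1_0)$, i.e.\ $\lambda=\varnothing$. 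The work is to compare across different values of $m$.

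First I would set up the degree-reduction mechanism. By \eqref{square}, $\xi=\parcop\circ\mu$ factors through the object $1$, so every element of the tensor ideal generated by $\xi$ factors through an object of strictly smaller degree; combined with \eqref{dinobot} (which identifies $P_k(t)/P_k^\xi(t)\cong\kk S_k$, the image of the propagating-number-$k$ diagrams under \eqref{garage}), this shows that $P_k^\xi(t)$ is precisely the span of partition diagrams of propagating number $<k$, equivalently the endomorphisms of $k$ that factor through a direct sum of objects of degree $<k$. I would pair this with the standard retract lemma: if an idempotent $e=\beta\alpha\in\End(X)$ factors through an object $Z$ (with $\alpha\colon X\to Z$, $\beta\colon Z\to X$), then $(X,e)$ is a direct summand of $Z$ in $\Kar(\Par(t))$, since $\beta\alpha=e$ exhibits $(X,e)$ as a retract of $Z$.

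Next I would prove the bijection. For surjectivity, take a nonzero indecomposable $(m,e)$ with $m\le k$; then $e$ is primitive and, by \cref{prims}, corresponds to some $\lambda$ with $0<|\lambda|\le m$ (or $m=0$, $e=1_0$). If $|\lambda|=m$, then $e$ is conjugate to $f_\lambda$ and $(m,e)\cong L(\lambda)$. If $|\lambda|<m$, then $e\in P_m^\xi(t)$ by \cref{prims}, so $e$ factors through a direct sum of copies of $m-1$; by the retract lemma and Krull--Schmidt, $(m,e)\cong(m',e')$ for a primitive $e'\in P_{m'}(t)$ with $m'<m$, and I finish by induction on $m$. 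For injectivity, suppose $L(\lambda)\cong L(\mu)$ with $|\lambda|\le|\mu|$. If $|\lambda|=|\mu|$, then $f_\lambda$ and $f_\mu$ are conjugate primitive idempotents of $P_{|\lambda|}(t)$, forcing $\lambda=\mu$ by \cref{prims}. If $|\lambda|<|\mu|$, the isomorphism would write $f_\mu=\beta\alpha$ with $\alpha\colon|\mu|\to|\lambda|$, $\beta\colon|\lambda|\to|\mu|$, so $f_\mu$ factors through an object of degree $<|\mu|$ and hence lies in $P_{|\mu|}^\xi(t)$, contradicting \cref{prims} (which places $f_\mu$, of size $|\mu|$, outside $P_{|\mu|}^\xi(t)$).

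Finally I would deduce the two numbered parts. For (b): if $(m,e)\cong L(\varnothing)=\one$, then $e$ is primitive ($\one$ is indecomposable), so by surjectivity and injectivity $(m,e)\cong L(\lambda)$ with $\lambda=\varnothing$; but \cref{prims} supplies no primitive idempotent of $P_m(t)$ labelled by $\varnothing$ when $m>0$ (this is exactly where genericity enters, via the observation in the proof of \cref{prims} that $\eta\circ\varepsilon$ is not idempotent), so $m=0$ and $(m,e)=(0,1_0)$. For (a): the conjugacy classes of primitive idempotents of $P_k(t)$ number $\#\{\lambda:0<|\lambda|\le k\}$ by \cref{prims}, and sending such a class to the isomorphism class of the corresponding $(k,e)$ is well defined and injective; by part (b) it lands in $\{L(\mu):0<|\mu|\le k\}$, a set of the same (finite) cardinality, so it is a bijection, whence every $L(\mu)$ with $0<|\mu|\le k$ is realized as $(k,e)$ for some $e\in P_k(t)$. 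I expect the main obstacle to be the across-degree injectivity: it rests entirely on the identification of $P_k^\xi(t)$ with the endomorphisms factoring through smaller objects and on the non-vanishing of $f_\mu$ modulo $P_{|\mu|}^\xi(t)$, whereas the remaining steps are bookkeeping via Krull--Schmidt and \cref{prims}.
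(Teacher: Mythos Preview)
Your proposal is correct and follows essentially the same approach as the paper: the paper's proof consists entirely of the sentence ``This follows as in the proof of \cite[Lem.~3.6]{CO11}'' (noting that the generic-$t$ case proceeds as the $t=0$ case there), and what you have written is precisely a fleshed-out version of that argument, with \cref{prims} playing the role of \cite[Th.~3.4]{CO11}. Your identification of $P_k^\xi(t)$ with the endomorphisms factoring through smaller objects, the induction via the retract/Krull--Schmidt step, the injectivity via $f_\mu\notin P_{|\mu|}^\xi(t)$, and the counting argument for part~(a) are all exactly the ingredients of the Comes--Ostrik proof.
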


\begin{proof}
  This follows as in the proof of \cite[Lem.~3.6]{CO11}.  Again, our assumption that $t$ is generic implies that we proceed as in the $t=0$ case of \cite[Lem.~3.6]{CO11}.
\end{proof}

We are now ready to prove the main result of this section.

\begin{theo} \label{finally}
  The homomorphism $[\Psi_t]$ of \cref{juice} is injective and its image is
  \begin{equation}
    [\Psi_t] \big( K_0(\Rep(S_t)) \big) = \KDelt(\Sym) \subseteq \rHeis,
  \end{equation}
  where $\rHeis$ is identified with $K_0(\Heis)$ as in \cref{K0Heis}.
\end{theo}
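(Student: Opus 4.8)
The plan is to evaluate $[\Psi_t]$ on the $\Z$-basis of $K_0(\Rep(S_t))$ given by the indecomposable objects. By \cref{indecs} the classes $[L(\lambda)]$, $\lambda \in \cY$, form such a basis. Since $\KDelt$ is an injective ring homomorphism (proved just above) and the Schur functions $s_\lambda$ form a $\Z$-basis of $\Sym$, the elements $\KDelt(s_\lambda)$ form a $\Z$-basis of $\KDelt(\Sym)$. I will show that $[\Psi_t(L(\lambda))] = \KDelt(s_\lambda) + (\text{a }\Z\text{-combination of }\KDelt(s_\mu)\text{ with }|\mu| < |\lambda|)$. Such a unitriangular relation, with respect to the size filtration $\lambda \mapsto |\lambda|$, shows at once that $[\Psi_t]$ carries the basis $\{[L(\lambda)]\}$ onto the basis $\{\KDelt(s_\lambda)\}$ of $\KDelt(\Sym)$, hence that $[\Psi_t]$ is injective with image exactly $\KDelt(\Sym)$. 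Throughout I work in the generic setting of \cref{bike}\cref{wheel}, in which the primitive idempotents $f_\lambda$ of \cref{prims} (which require $t$ to be invertible) are available.

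The first ingredient is the identity
\[
  \KDelt(s_\lambda) = \big[ \big( \uparrow^{k} \downarrow^{k},\, d(e_\lambda) \big) \big]
  \quad \text{in } \rHeis, \qquad k = |\lambda|,
\]
where $d$ is the diagonal homomorphism \cref{diagonal}. Indeed, decomposing the idempotent $d(e_\lambda)$ amounts to decomposing $\Ind_{S_k}^{S_k \times S_k}(\kk S_k e_\lambda)$, whose class, by \cite{Lit56} (the categorical interpretation of the Kronecker coproduct recorded before \cref{chariot}), is $\KDelt(s_\lambda) = \sum_{\mu,\nu} g^\lambda_{\mu\nu}\, s_\mu \otimes s_\nu$ with $g^\lambda_{\mu\nu}$ the Kronecker coefficients; translating each summand $(\uparrow^k \downarrow^k, e_\mu \otimes e_\nu)$ through \cref{K0Heis} then gives the claimed equality.

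The second, and main, ingredient is the asymptotic computation of $[\Psi_t(L(\lambda))]$. Here I use the decomposition $(\uparrow\downarrow)^k \cong \uparrow^k \downarrow^k \oplus Q$ of \cref{jungle}, where $Q = \bigoplus_{\ell < k}(\uparrow^\ell\downarrow^\ell)^{\oplus \stirling{k}{\ell}}$ collects the summands with fewer than $k$ arrows. By \cref{square} the morphism $\Psi_t(\xi)$ factors through a single $\uparrow\downarrow$ pair, so every morphism in $\Psi_t(P_k^\xi(t))$ factors through $Q$; these form an ideal $J$ in $\End_\Heis((\uparrow\downarrow)^k)$. Now $\Psi_t(s_i) = x_i + y_i$ with $y_i = \Psi_t(\xi_i) \in J$ and the double crossings $x_i$ (cf. \cref{breakdown}) restricting on the top summand $\uparrow^k\downarrow^k$ to the diagonal symmetric group element $d(s_i)$. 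Since $f_\lambda \equiv e_\lambda \pmod{P_k^\xi(t)}$ by \cref{prims} and \cref{dinobot}, the idempotent $\Psi_t(f_\lambda)$ restricts, modulo $J$, to $d(e_\lambda)$ on the top summand, its remaining blocks being supported on $Q$.

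The hard part is the passage from this congruence to Grothendieck classes: turning $\Psi_t(f_\lambda) \equiv d(e_\lambda) \pmod{J}$ into an honest block-diagonalization of the idempotent (so that $[\Psi_t(L(\lambda))]$ splits as the top class plus a $Q$-class) and checking that the $Q$-block again contributes elements of $\KDelt(\Sym)$. I expect to manage this by induction on $k$: the $Q$-block is built from images under $\Psi_t$ of idempotents in partition algebras $P_\ell(t)$ with $\ell < k$, so the inductive hypothesis identifies its class with a $\Z$-combination of the $\KDelt(s_\mu)$, $|\mu| < k$. Combined with the first ingredient this yields $[\Psi_t(L(\lambda))] = \KDelt(s_\lambda) + \sum_{|\mu|<|\lambda|} c_{\lambda\mu}\,\KDelt(s_\mu)$, the desired unitriangular relation. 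Inverting it over $\Z$ expresses each $\KDelt(s_\lambda)$ in terms of the $[\Psi_t(L(\mu))]$, so $[\Psi_t]$ sends the basis $\{[L(\lambda)]\}$ to the basis $\{\KDelt(s_\lambda)\}$ of $\KDelt(\Sym)$, and the injectivity of $[\Psi_t]$ together with the identification of its image with $\KDelt(\Sym)$ follows immediately.
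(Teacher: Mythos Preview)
Your proposal is correct and follows essentially the same route as the paper. Both arguments proceed by induction on $k=|\lambda|$, identify $\KDelt(s_\lambda)$ with $[(\uparrow^k\downarrow^k,\,d(e_\lambda))]$, use the decomposition \cref{jungle} to see that the double crossing $x_i=\Psi_t(s_i-\xi_i)$ restricts to $d(s_i)$ on the top summand $\uparrow^k\downarrow^k$, combine this with $f_\lambda\equiv e_\lambda\pmod{P_k^\xi(t)}$ to get $\Psi_t(f_\lambda)\equiv d(e_\lambda)$ modulo morphisms supported on the lower summands, and then invoke the induction hypothesis to conclude the unitriangular relation $[\Psi_t(L(\lambda))]\equiv\KDelt(s_\lambda)\pmod{\KDelt(\Sym_{\le k-1})}$.

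Two small remarks. First, your phrase ``factors through $Q$'' is slightly imprecise: what you actually use is that $\Psi_t(\xi_i)$ factors through $(\uparrow\downarrow)^{k-1}$, hence through objects isomorphic to direct summands of $Q$; this is enough for the $K_0$ argument but is not literally the same as factoring through the fixed summand $Q$ with its given inclusion and projection. Second, you are right to flag the passage from the congruence $\Psi_t(f_\lambda)\equiv d(e_\lambda)\pmod J$ to the Grothendieck-group identity as the delicate point; the paper handles it in one line (``by \cref{prims} and the induction hypothesis''), relying on the fact that idempotents in $P_k^\xi(t)$ cut out objects already lying in $\Rep_{k-1}(S_t)$, whose $\Psi_t$-images have classes in $\KDelt(\Sym_{\le k-1})$ by induction. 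Your proposed inductive justification is exactly this, so you have not missed anything.
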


\begin{proof}
  For $k \in \N$, let $\Rep_k(S_t)$ denote the full subcategory of $\Rep(S_t)$ containing the objects of the form $(k,e)$.  By \cref{indecs}, $\Rep_k(S_t)$ is also the full subcategory of $\Rep(S_t)$ containing the objects of the form $(m,e)$, $m \le k$.

  We prove by induction on $k$ that the restriction of $[\Psi_t]$ to $K_0(\Rep_k(S_t))$ is injective, and that
  \[
    [\Psi_t](K_0(\Rep_k(S_t))) = \KDelt (\Sym_{\le k}),
  \]
  where $\Sym_{\le k}$ denotes the subspace of $\Sym$ spanned by symmetric functions of degree $\le k$.  The case $k=0$ is immediate.

  Suppose $k \ge 1$.  The components $\uparrow^k \downarrow^k \to (\uparrow \downarrow)^k$ and $(\uparrow \downarrow)^k \to\ \uparrow^k \downarrow^k$ of the isomorphism \cref{jungle} are
    \[
    \begin{tikzpicture}[anchorbase]
      \draw[->] (0,0) to (0,2);
      \draw[->] (0.5,0) \braidto (1,2);
      \node at (1.1,0.3) {$\cdots$};
      \draw[->] (1.5,0) \braidto (3,2);
      \draw[<-] (2,0) \braidto (0.5,2);
      \draw[<-] (2.5,0) \braidto (1.5,2);
      \node at (3,0.3) {$\cdots$};
      \draw[<-] (3.5,0) \braidto (3.5,2);
      \node at (2.3,1.7) {$\cdots$};
    \end{tikzpicture}
    \qquad \text{and} \qquad
    \begin{tikzpicture}[anchorbase]
      \draw[<-] (0,0) to (0,-2);
      \draw[<-] (0.5,0) to[out=down,in=up] (1,-2);
      \node at (1.1,-0.3) {$\cdots$};
      \draw[<-] (1.5,0) to[out=down,in=up] (3,-2);
      \draw[->] (2,0) to[out=down,in=up] (0.5,-2);
      \draw[->] (2.5,0) to[out=down,in=up] (1.5,-2);
      \node at (3,-0.3) {$\cdots$};
      \draw[->] (3.5,0) to[out=down,in=up] (3.5,-2);
      \node at (2.3,-1.7) {$\cdots$};
    \end{tikzpicture}
  \]
  respectively.  For $i=1,\dotsc,k-1$, consider the morphism (we use \cref{square} here)
  \begin{equation} \label{cadet}
    \Psi_t(s_i - \xi_i)
    =
    \begin{tikzpicture}[anchorbase]
      \draw[->] (0,0) to (0,1);
      \draw[<-] (0.5,0) to (0.5,1);
      \node at (1.25,0.5) {$\cdots$};
      \draw[->] (2,0) to (2,1);
      \draw[<-] (2.5,0) to (2.5,1);
      \draw[->] (3,0) \braidto (4,1);
      \draw[<-] (3.5,0) \braidto (4.5,1);
      \draw[->] (4,0) \braidto (3,1);
      \draw[<-] (4.5,0) \braidto (3.5,1);
      \draw[->] (5,0) to (5,1);
      \draw[<-] (5.5,0) to (5.5,1);
      \node at (6.25,0.5) {$\cdots$};
      \draw[->] (7,0) to (7,1);
      \draw[<-] (7.5,0) to (7.5,1);
    \end{tikzpicture}
    \ \in \End_\Heis \big( (\uparrow \downarrow)^k \big).
  \end{equation}
  Under the isomorphism \cref{jungle}, this corresponds to
  \[
    \begin{tikzpicture}[anchorbase]
      \draw[->] (0,-2) to (0,2);
      \draw[->] (1,-2) \braidto (2,-0.5) to (2,0.5) \braidto (1,2);
      \draw[->] (1.5,-2) \braidto (3,-0.5) \braidto (4,0.5) \braidto (2,2);
      \draw[->] (2,-2) \braidto (4,-0.5) \braidto (3,0.5) \braidto (1.5,2);
      \draw[->] (2.5,-2) \braidto (5,-0.5) to (5,0.5) \braidto (2.5,2);
      \draw[->] (3.5,-2) \braidto (7,-0.5) to (7,0.5) \braidto (3.5,2);
      \draw[<-] (4,-2) \braidto (0.5,-0.5) to (0.5,0.5) \braidto (4,2);
      \draw[<-] (5,-2) \braidto (2.5,-0.5) to (2.5,0.5) \braidto (5,2);
      \draw[<-] (5.5,-2) \braidto (3.5,-0.5) \braidto (4.5,0.5) \braidto (5.5,2);
      \draw[<-] (6,-2) \braidto (4.5,-0.5) \braidto (3.5,0.5) \braidto (6,2);
      \draw[<-] (6.5,-2) \braidto (5.5,-0.5) to (5.5,0.5) \braidto (6.5,2);
      \draw[<-] (7.5,-2) to (7.5,2);
      \node at (0.5,-1.8) {$\cdots$};
      \node at (3,-1.8) {$\cdots$};
      \node at (4.5,-1.8) {$\cdots$};
      \node at (7,-1.8) {$\cdots$};
      \node at (1.25,0) {$\cdots$};
      \node at (6.25,0) {$\cdots$};
      \node at (0.5,1.8) {$\cdots$};
      \node at (3,1.8) {$\cdots$};
      \node at (4.5,1.8) {$\cdots$};
      \node at (7,1.8) {$\cdots$};
    \end{tikzpicture}
    \overset{\cref{H3}}{=} s_i \otimes s_i = d(s_i) \in \End_\Heis(\uparrow^k \downarrow^k).
  \]
  It follows that, for any Young diagram $\lambda$ of size $k$, we have $\Psi_t(e_\lambda) - d(e_\lambda) \in \Psi_t(P_k^\xi(t))$.  Since $f_\lambda - e_\lambda \in P_k^\xi(t)$ by \cref{prims}, this implies that
  \[
    \Psi_t(f_\lambda) - d(e_\lambda)
    = \left( \Psi_t(f_\lambda) - \Psi_t(e_\lambda) \right) +  \left( \Psi_t(e_\lambda) - d(e_\lambda) \right)
    \in \Psi_t(P_k^\xi(t)).
  \]
  Thus, by \cref{prims} and the induction hypothesis, we have
  \[
    [\Psi_t(L(\lambda))] - \KDelt(s_\lambda)
    = [\Psi_t(L(\lambda))] - [\uparrow^k \downarrow^k, d(e_\lambda)]
    \in \KDelt(\Sym_{\le (k-1)}).
  \]
  Since the $s_\lambda$ with $|\lambda| = k$ span the space of degree $k$ symmetric functions, we are done.
\end{proof}

As an immediate corollary of \cref{finally}, we recover the following result of \cite[Cor.~5.12]{Del07}.  (The $T_n$ of \cite{Del07} correspond to the complete symmetric functions.)

\begin{cor} \label{K0P}
  We have an isomorphism of rings $K_0(\Rep(S_t)) \cong \Sym$.
\end{cor}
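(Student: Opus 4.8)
The plan is to assemble \cref{finally} together with the lemma stating that $\KDelt \colon \Sym \to \rHeis$ is an injective ring homomorphism. The key point is that every map in play is a \emph{ring} homomorphism, not merely an additive map, so that the chain of bijections we produce is automatically an isomorphism of rings.

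First I would recall from \cref{juice} that $[\Psi_t] \colon K_0(\Rep(S_t)) \to \rHeis$ is a ring homomorphism, since it is induced by the additive monoidal functor $\Psi_t$. By \cref{finally}, this homomorphism is injective and its image is exactly $\KDelt(\Sym)$. An injective ring homomorphism factors as a ring isomorphism onto its image, so $[\Psi_t]$ yields a ring isomorphism $K_0(\Rep(S_t)) \xrightarrow{\cong} \KDelt(\Sym)$.

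Next I would invoke the lemma preceding \cref{finally}, which establishes that $\KDelt \colon \Sym \to \rHeis$ is an injective ring homomorphism. Restricting its codomain to its image, $\KDelt$ gives a ring isomorphism $\Sym \xrightarrow{\cong} \KDelt(\Sym)$. Composing this with the inverse of the isomorphism from the previous step produces the desired ring isomorphism $K_0(\Rep(S_t)) \cong \Sym$.

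There is essentially no obstacle here, as the entire content is packaged into \cref{finally} and into the injectivity of $\KDelt$, both of which are already available. The only point requiring care—and the reason the statement is not purely formal—is to confirm that each map respects multiplication, so that the composite bijection is genuinely an isomorphism of rings rather than only of abelian groups. This is precisely why \cref{juice} records that $[\Psi_t]$ is a ring homomorphism and why the preceding lemma is phrased in terms of ring homomorphisms.
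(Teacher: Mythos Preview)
Your proposal is correct and matches the paper's approach: the corollary is stated there as an immediate consequence of \cref{finally}, and you have simply spelled out the chain of ring isomorphisms (using the injectivity of $[\Psi_t]$ with image $\KDelt(\Sym)$, together with the lemma that $\KDelt$ is an injective ring homomorphism) that makes this immediate.
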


The Grothendieck ring is one method of decategorification.  Another is the trace, or zeroth Hochschild homology.  We refer the reader to \cite{BGHL14} for details.  The functor $\Psi_t$ induces a ring homomorphism on traces.  We conclude with a brief discussion of this induced map.  First, note that the trace of a category is isomorphic to the trace of its additive Karoubi envelope.  (See \cite[Prop.~3.2]{BGHL14}.)  Thus, $\Tr(\Par(t)) \cong \Tr(\Rep(S_t))$.  In addition, our assumption that $t$ is generic (in particular, $t \notin \N$) implies that $\Rep(S_t)$ is semisimple.  (See \cite[Th.~2.18]{Del07}.)  It follows that the Chern character map
\[
  h \colon K_0(\Rep(S_t)) \to \Tr(\Rep(S_t))
\]
is an isomorphism.  (See \cite[Prop.~5.4]{Sav18}.)  Hence $\Tr(\Par(t)) \cong \Tr(\Rep(S_t)) \cong \Sym$ by \cref{K0P}.  On the other hand, the trace of the Heisenberg category was computed in \cite[Th.~1]{CLLS18} and shown to be equal to a quotient of the W-algebra $W_{1+\infty}$ by a certain ideal $I$.  This quotient contains the Heisenberg algebra $\rHeis$ and the Chern character map induces an injective ring homomorphism
\[
  \rHeis \cong K_0(\Heis) \to \Tr(\Heis) \cong W_{1+\infty}/I.
\]
It follows that the functor $\Psi_t$ induces an injective ring homomorphism
\[
  \Sym \cong \Tr(\Rep(S_t)) \to \Tr(\Heis) \cong W_{1+\infty}/I,
\]
and the image of this map is $\KDelt(\Sym) \subseteq \rHeis \subseteq W_{1+\infty}/I$.

\appendix
\section{Faithfulness over any commutative ring \\ (with Christopher Ryba)\label{appendix}}

In this appendix we prove \cref{faithful} over an arbitrary commutative ring $\kk$.

We say a partition diagram is a \emph{permutation} if it is the image of an element of $S_k$, $k \in \N$, under the map \cref{garage}.   We say a partition diagram is \emph{tensor-planar} if it is a tensor product (horizontal juxtaposition) of partition diagrams consisting of a single block.  Note that every tensor-planar partition diagram is planar (i.e.\ can be represented as a graph without edge crossings inside of the rectangle formed by its vertices) but the converse is false.

Every partition diagram $D$ can be factored as a product $D = D_1 \circ D_2 \circ D_3$, where $D_1$ and $D_3$ are permutations and $D_2$ is tensor-planar.  Furthermore, we may assume that $D_1$ and $D_3$ are compositions of simple transpositions that only transpose vertices in different blocks (since transposing vertices in the same block has no effect).  The number of blocks in $D$ is clearly equal to the number of blocks in $D_2$.  For example, the partition diagram
\[
  D =
  \begin{tikzpicture}[anchorbase]
    \pd{0.5,0};
    \pd{1,0};
    \pd{1.5,0};
    \pd{2,0};
    \pd{0,0.5};
    \pd{0.5,0.5};
    \pd{1,0.5};
    \pd{1.5,0.5};
    \pd{2,0.5};
    \draw (0,0.5) to[out=down,in=down] (1,0.5);
    \draw (0.5,0) to[out=up,in=up,looseness=0.5] (1,0);
    \draw (0.5,0.5) to[out=down,in=down,looseness=0.5] (1.5,0.5);
    \draw (1,0) \braidto (2,0.5);
    \draw (2,0) \braidto (1.5,0.5);
  \end{tikzpicture}
\]
has four blocks and decomposition $D = D_1 \circ D_2 \circ D_3$, where
\[
  D_1 =
  \begin{tikzpicture}[anchorbase]
    \pd{0,0};
    \pd{0.5,0};
    \pd{1,0};
    \pd{1.5,0};
    \pd{2,0};
    \pd{0,0.5};
    \pd{0.5,0.5};
    \pd{1,0.5};
    \pd{1.5,0.5};
    \pd{2,0.5};
    \draw (0,0) \braidto (0,0.5);
    \draw (1.5,0) \braidto (1.5,0.5);
    \draw (2,0) \braidto (2,0.5);
    \draw (0.5,0) \braidto (1,0.5);
    \draw (1,0) \braidto (0.5,0.5);
  \end{tikzpicture},
  \quad
  D_2 =
  \begin{tikzpicture}[anchorbase]
    \pd{0,0};
    \pd{0.5,0};
    \pd{1,0};
    \pd{1.5,0};
    \pd{0,0.5};
    \pd{0.5,0.5};
    \pd{1,0.5};
    \pd{1.5,0.5};
    \pd{2,0.5};
    \draw (0,0.5) to[out=down,in=down] (0.5,0.5);
    \draw (0.5,0) \braidto (1,0.5) to[out=down,in=down] (1.5,0.5);
    \draw (1,0) to[out=up,in=up] (1.5,0) \braidto (2,0.5);
  \end{tikzpicture}
  =
  \begin{tikzpicture}[{>=To,baseline={(0,0.15)}}]
    \pd{0,0};
    \draw (0,0) to (0,0.25);
    \node at (0,.5) {};
  \end{tikzpicture}
  \otimes
  \begin{tikzpicture}[{>=To,baseline={(0,0.15)}}]
    \pd{0,0.5};
    \pd{0.5,0.5};
    \draw (0,0.5) to[out=down,in=down,looseness=1.5] (0.5,0.5);
    \node at (0,0) {};
  \end{tikzpicture}
  \otimes
  \begin{tikzpicture}[{>=To,baseline={(0,0.15)}}]
    \pd{0,0};
    \pd{0,0.5};
    \pd{0.5,0.5};
    \draw (0,0) to (0,0.5) to[out=down,in=down,looseness=1.5] (0.5,0.5);
  \end{tikzpicture}
  \otimes
  \begin{tikzpicture}[{>=To,baseline={(0,0.15)}}]
    \pd{0,0};
    \pd{0.5,0};
    \pd{0,0.5};
    \draw (0.5,0) to[out=up,in=up,looseness=1.5] (0,0) to (0,0.5);
  \end{tikzpicture}
  ,
  \quad
  D_3 =
  \begin{tikzpicture}[anchorbase]
    \pd{0.5,0};
    \pd{1,0};
    \pd{1.5,0};
    \pd{2,0};
    \pd{0.5,0.5};
    \pd{1,0.5};
    \pd{1.5,0.5};
    \pd{2,0.5};
    \draw (0.5,0) to[out=up,in=down,looseness=0.5] (1.5,0.5);
    \draw (1,0) to[out=up,in=down,looseness=0.5] (2,0.5);
    \draw (1.5,0) to[out=up,in=down,looseness=0.5] (0.5,0.5);
    \draw (2,0) to[out=up,in=down,looseness=0.5] (1,0.5);
  \end{tikzpicture}.
\]

For $n,k,\ell \in \N$, let $\Hom_{\Par}^{\le n}(k,\ell)$ denote the subspace of $\Hom_{\Par}(k,\ell)$ spanned by partition diagrams with at most $n$ blocks.  Composition respects the corresponding filtration on morphism spaces.

Recall the bases of the morphism spaces of $\Heis$ given in \cite[Prop.~5]{Kho14}.  For any such basis element $X$ in $\Hom_{\Peis}\big( (\uparrow \downarrow)^k, (\uparrow \downarrow)^\ell \big)$, define the \emph{block number} of $X$ to be number of distinct closed (possibly intersecting) loops in the diagram
\[
  \begin{tikzpicture}[anchorbase]
    \draw[->] (0,0) -- (0,0.1) arc (180:0:.25) -- (0.5,0);
  \end{tikzpicture}^{\otimes \ell}
  \circ
  X
  \circ
  \begin{tikzpicture}[anchorbase]
    \draw[<-] (0,1) -- (0,0.9) arc (180:360:.25) -- (0.5,1);
  \end{tikzpicture}^{\otimes k}.
\]
For $n \in \N$, let $\Hom_{\Peis}^{\le n} \big( (\uparrow \downarrow)^k, (\uparrow \downarrow)^\ell \big)$ denote the subspace of $\Hom_{\Peis} \big( (\uparrow \downarrow)^k, (\uparrow \downarrow)^\ell \big)$ spanned by basis elements with block number at most $n$.  Composition respects the resulting filtration on morphism spaces.

The image under $\Psi_t$ of tensor-planar partition diagrams (writing the image in terms of the aforementioned bases of the morphism spaces of $\Heis$) is particularly simple to describe.  Since each tensor-planar partition diagram is a tensor product of single blocks, consider the case of a single block.  Then, for example, we have
\[
  \Psi_t
  \left(
    \begin{tikzpicture}[anchorbase]
      \pd{0,0.5};
      \pd{0.5,0.5};
      \pd{1,0.5};
      \draw (0,0.5) to[out=up,in=up,looseness=1.5] (0.5,0.5);
      \draw (0.5,0.5) to[out=up,in=up,looseness=1.5] (1,0.5);
    \end{tikzpicture}
  \right)
  =
  \begin{tikzpicture}[>=To,baseline={(0,0.2)}]
    \draw[<-] (1.5,0) to[out=up,in=up,looseness=2] (2,0);
    \draw[<-] (0.5,0) to[out=up,in=up,looseness=2] (1,0);
    \draw[->] (0,0) to[out=up,in=up] (2.5,0);
  \end{tikzpicture}
  \quad \text{and} \quad
  \Psi_t
  \left(
    \begin{tikzpicture}[anchorbase]
      \pd{0,0};
      \pd{0.5,0};
      \pd{1,0};
      \pd{0,0.5};
      \pd{0.5,0.5};
      \draw (1,0) to (0,0) to (0,0.5) to (0.5,0.5);
    \end{tikzpicture}
  \right)
  =
  \begin{tikzpicture}[anchorbase]
    \draw[->] (0,0) \braidto (0.5,1);
    \draw[<-] (0.5,0) to[out=up,in=up,looseness=2] (1,0);
    \draw[<-] (1.5,0) to[out=up,in=up,looseness=2] (2,0);
    \draw[<-] (2.5,0) \braidto(2,1);
    \draw[->] (1,1) to[out=down,in=down,looseness=2] (1.5,1);
  \end{tikzpicture}
  \ .
\]
The general case is analogous.  (In fact, the images of all planar partition diagrams are similarly easy to describe.)  In particular, if $D$ is a tensor-planar partition diagram with $n$ blocks, then $\Psi_t(D)$ is a planar diagram with block number $n$.

For a permutation partition diagram $D \colon k \to k$, let $T(D)$ be the planar diagram (a morphism in $\Peis$) defined as follows: Write $D = s_{i_1} \circ s_{i_2} \circ \dotsb \circ s_{i_r}$ as a reduced word in simple transpositions and let
\[
  T(D) = \Psi_t(s_{i_1} - \xi_{i_1}) \circ \Psi_t(s_{i_2} - \xi_{i_2}) \circ \dotsb \circ \Psi_t(s_{i_r} - \xi_{i_r}).
\]
(See \cref{cadet}.)  It follows from the braid relations \cref{genbraid} that $T(D)$ is independent of the choice of reduced word for $D$.

\begin{prop} \label{locker}
  Suppose $D \colon k \to \ell$ is a partition diagram with $n$ blocks.  Write $D = D_1 \circ D_2 \circ D_3$, where $D_2$ is a tensor-planar partition diagram and $D_1$ and $D_3$ are compositions of simple transpositions that only transpose vertices in different blocks.  Then
  \[
    \Psi_t(D) - T(D_1) \circ \Psi_t(D_2) \circ T(D_3) \in \Hom_{\Peis}^{\le n-1} \big( (\uparrow \downarrow)^k, (\uparrow \downarrow)^\ell \big).
  \]
\end{prop}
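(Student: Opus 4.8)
The plan is to expand $\Psi_t(D)$ by functoriality, peel off each crossing using the decomposition $\Psi_t(s_i) = \Psi_t(s_i - \xi_i) + \Psi_t(\xi_i)$, recognize the asserted expression as a single ``leading'' monomial, and bound the block number of every remaining monomial by $n-1$. Concretely, I would first write $\Psi_t(D) = \Psi_t(D_1) \circ \Psi_t(D_2) \circ \Psi_t(D_3)$, choose reduced words $D_1 = s_{a_1} \circ \dotsb \circ s_{a_p}$ and $D_3 = s_{b_1} \circ \dotsb \circ s_{b_q}$, and use \cref{functordef} together with \cref{square} to factor each generator as $\Psi_t(s_i) = x_i + y_i$, where $x_i = \Psi_t(s_i - \xi_i)$ is the pure crossing and $y_i = \Psi_t(\xi_i)$ is the cap-cup; both are single diagrams. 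Expanding the two products exhibits $\Psi_t(D)$ as a sum of single diagrams $E$, one per choice of a crossing or a cap-cup at each transposition of $D_1$ and $D_3$. The unique monomial in which every choice is a crossing is, by the definition of $T$ and the independence of reduced word noted above, exactly $T(D_1) \circ \Psi_t(D_2) \circ T(D_3)$; every other monomial contains at least one factor $y_i$. It therefore suffices to show that each such monomial $E$ has block number at most $n-1$.

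To analyze a monomial $E$ I would attach to it a bookkeeping partition diagram $\bar D_E$, obtained from $D = D_1 \circ D_2 \circ D_3$ by keeping $D_2$ and replacing, in $D_1$ and $D_3$, each transposition by $s_i$ where the monomial chose the crossing $x_i$ and by $\xi_i$ where it chose the cap-cup $y_i$. The block number of $E$ is computed by closing $E$ off with $\mathrm{cap} = \Psi_t(\varepsilon)$ and $\mathrm{cup} = \Psi_t(\eta)$ and counting loops, while the number of blocks of $\bar D_E$ is computed by closing $\bar D_E$ off in $\Par$, each block becoming a free-floating component. Since $\Psi_t$ sends a single block to a single connected planar diagram (so $\Psi_t(D_2)$ has block number equal to the number of blocks of $D_2$, as recorded before the proposition), since a crossing $x_i$ reroutes the two adjacent strand-pairs exactly as $s_i$ permutes the corresponding slots, and since the cap-cup $y_i = \Psi_t(\xi_i)$ fuses the two local strand-pairs exactly as $\xi_i$ fuses its four vertices into one block, the closed-off diagram $\widehat E$ has the same connectivity as the closed-off partition diagram $\widehat{\bar D_E}$. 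Hence the block number of $E$ equals the number of blocks of $\bar D_E$.

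It then remains to count the blocks of $\bar D_E$. Passing from $s_i$ to $\xi_i$ only identifies endpoints in the underlying graph, which can never increase the number of connected components; and by hypothesis every transposition in $D_1$ and $D_3$ swaps vertices lying in \emph{distinct} blocks of $D$, so any single such replacement merges two distinct blocks into one and strictly lowers the block count. A monomial $E$ other than the leading one involves at least one such replacement, whence $\bar D_E$ has at most $n-1$ blocks and $E \in \Hom_{\Peis}^{\le n-1}\big( (\uparrow\downarrow)^k, (\uparrow\downarrow)^\ell \big)$. Summing over the non-leading monomials yields the proposition.

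The step requiring the most care is the identification of the block number of $E$ with the number of blocks of $\bar D_E$: it rests on matching, loop by loop, the connectivity produced in the Heisenberg diagram by $x_i$ and $y_i = \Psi_t(\xi_i)$ with that produced by $s_i$ and $\xi_i$ in the partition diagram. This is exactly the ``single block'' computation illustrated just before the proposition, and in reducing the crossings of $E$ to the basis of \cite[Prop.~5]{Kho14} one uses only \cref{H1} and \cref{H3}, whose corrections are of strictly smaller block number and are absorbed by the fact that composition respects the block-number filtration. Once this identification is in place, the combinatorial count of blocks of $\bar D_E$ is routine.
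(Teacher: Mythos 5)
Your proposal is correct and takes essentially the same route as the paper: expanding every $\Psi_t(s_i)$ as $\Psi_t(s_i-\xi_i)+\Psi_t(\xi_i)$ and analyzing the resulting monomials is just the paper's one-transposition-at-a-time peeling carried out all at once, with your bookkeeping diagram $\bar D_E$ playing exactly the role of the paper's partition diagram $\xi_j \circ D'$ (which has $n-1$ blocks because the transposition swaps vertices in distinct blocks). Both arguments ultimately rest on the same technical assertion — that passing to Khovanov's basis cannot raise the block number, i.e.\ that composition respects the filtration $\Hom_{\Peis}^{\le n}$ — which the paper states without proof and which you correctly isolate as the delicate step.
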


\begin{proof}
  We have $\Psi_t(D) = \Psi_t(D_1) \circ \Psi_t(D_2) \circ \Psi_t(D_3)$.  As noted above, $D_2$ has $n$ blocks and $\Psi_t(D_2)$ has block number $n$.  Suppose $1 \le j < \ell$.  If $D' \colon k \to \ell$ is a partition diagram with $n$ blocks such that $j'$ and $(j+1)'$ lie in different blocks, then $\xi_j \circ D'$ has $n-1$ blocks.  It follows that
  \[
    \Psi_t(s_j) \circ \Psi_t(D') - \Psi_t(s_j - \xi_j) \circ \Psi_t(D')
    \in \Hom_{\Peis}^{\le n-1} \big( (\uparrow \downarrow)^k, (\uparrow \downarrow)^\ell \big).
  \]
  Similarly,
  \[
    \Psi_t(D') \circ \Psi_t(s_j) - \Psi_t(D') \circ \Psi_t(s_j - \xi_j)
    \in \Hom_{\Peis}^{\le n-1} \big( (\uparrow \downarrow)^k, (\uparrow \downarrow)^\ell \big)
  \]
  for any $1 \le j < k$ such that $j$ and $j+1$ lie in different blocks of $D'$.  The result then follows by writing $D_1$ and $D_3$ as reduced words in simple transpositions.
\end{proof}

\begin{cor}
  The functor $\Psi_t$ is faithful over an arbitrary commutative ring $\kk$.
\end{cor}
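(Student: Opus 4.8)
The plan is to deduce faithfulness over an arbitrary commutative ring $\kk$ from \cref{locker} by passing to the associated graded spaces for the two block-number filtrations. As in the proof of \cref{faithful}, it suffices to show that for all $k,\ell \in \N$ the $\kk$-linear map $\Psi_t \colon \Hom_{\Par}(k,\ell) \to \Hom_{\Peis}\big((\uparrow\downarrow)^k,(\uparrow\downarrow)^\ell\big)$ is injective. The partition diagrams with exactly $n$ blocks form a basis of the $n$-th graded piece $\Hom_{\Par}^{\le n}/\Hom_{\Par}^{\le n-1}$ of the filtration by number of blocks. The discussion preceding \cref{locker} (the image of a tensor-planar diagram with $n$ blocks has block number $n$, and crossings preserve block number) together with \cref{locker} shows that $\Psi_t$ is a filtered map, namely $\Psi_t\big(\Hom_{\Par}^{\le n}(k,\ell)\big) \subseteq \Hom_{\Peis}^{\le n}\big((\uparrow\downarrow)^k,(\uparrow\downarrow)^\ell\big)$. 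Since both filtrations are exhaustive and bounded (block numbers lie between $0$ and $k+\ell$), a filtered map is injective as soon as the induced map on associated graded spaces is injective: a nonzero element of top filtration degree $n$ would otherwise have nonzero image in degree $n$, a contradiction.

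It therefore suffices to prove injectivity of $\gr \Psi_t$. By \cref{locker}, for a partition diagram $D$ with $n$ blocks and canonical factorization $D = D_1 \circ D_2 \circ D_3$, the class of $\Psi_t(D)$ in $\Hom_{\Peis}^{\le n}/\Hom_{\Peis}^{\le n-1}$ equals the class of $T(D_1) \circ \Psi_t(D_2) \circ T(D_3)$. Thus the required injectivity is equivalent to the statement that, for each fixed $n$, these morphisms $T(D_1) \circ \Psi_t(D_2) \circ T(D_3)$ are linearly independent modulo $\Hom_{\Peis}^{\le n-1}$ as $D$ ranges over the partition diagrams $k \to \ell$ with exactly $n$ blocks.

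For this central step I would argue that, modulo lower filtration degree, each such composite is a single element of the basis of \cite[Prop.~5]{Kho14}, and that distinct diagrams $D$ yield distinct basis elements. The pure crossings $\Psi_t(s_i - \xi_i)$ of \cref{cadet} satisfy the braid relations \cref{genbraid}, and although $\Psi_t(s_i-\xi_i)^2 = 1_{(\uparrow\downarrow)^k} - \Psi_t(\xi_i)$ is not the identity on the nose (the correction is exactly the opposite-crossing term in \cref{H3}), the correction term $\Psi_t(\xi_i)$ has strictly smaller block number; hence at leading order $T(D_1)$ and $T(D_3)$ act as honest, invertible permutations of the $\uparrow\downarrow$ pairs. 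Meanwhile $\Psi_t(D_2)$ is an explicit planar diagram of nested cups, caps and through-strands of block number $n$ that records the blocks of $D_2$. Consequently $T(D_1)\circ\Psi_t(D_2)\circ T(D_3)$ already sits, modulo $\Hom_{\Peis}^{\le n-1}$, in the ``braiding--planar core--braiding'' normal form underlying Khovanov's basis, with block number exactly $n$. Because we insisted that $D_1$ and $D_3$ only transpose vertices lying in different blocks, the redundancy of permuting strands within a single block is removed, so the assignment $D \mapsto \big(D_1, D_2, D_3\big)$ is injective and the partition $D$ can be read off from the connectivity of the resulting diagram.

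The main obstacle is precisely this last bookkeeping: one must verify carefully that the leading term of $T(D_1)\circ\Psi_t(D_2)\circ T(D_3)$ really coincides with a basis diagram of \cite[Prop.~5]{Kho14} and that the resulting map from $n$-block partition diagrams to basis elements is injective. Granting this, the leading terms form a subset of Khovanov's basis, which is an integral basis and hence a $\kk$-basis for every commutative ring $\kk$; their linear independence therefore holds over an arbitrary $\kk$, and the proof is complete.
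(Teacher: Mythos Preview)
Your approach is essentially the same as the paper's: both arguments deduce faithfulness from \cref{locker} by a triangularity/filtration argument, using that the leading terms $T(D_1)\circ\Psi_t(D_2)\circ T(D_3)$ are linearly independent elements of Khovanov's basis and are uniquely determined by $D$.

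One small point where you overcomplicate matters: you worry that $T(D_1)\circ\Psi_t(D_2)\circ T(D_3)$ might only be a basis element ``modulo lower filtration,'' and you invoke $\Psi_t(s_i-\xi_i)^2 = 1 - \Psi_t(\xi_i)$ to justify working at leading order. This detour is unnecessary. Each $\Psi_t(s_i-\xi_i)$ is already a \emph{single} string diagram (see \cref{cadet}), $\Psi_t(D_2)$ is a single planar diagram, and their composite is a single reduced diagram in $\Heis$ on the nose---no relations need to be applied, and no lower-order correction appears. The paper exploits this directly: it observes that the composite is literally a basis diagram from \cite[Prop.~5]{Kho14}, that $D$ can be read off from it (replace each $\uparrow\downarrow$ pair by a vertex and each strand by an edge), and hence distinct $D$ give distinct basis elements. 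Your concern about non-uniqueness of the factorization $D = D_1 D_2 D_3$ is legitimate, but the right resolution is not that $D\mapsto(D_1,D_2,D_3)$ is injective (it is not); rather, the composite $T(D_1)\circ\Psi_t(D_2)\circ T(D_3)$ itself depends only on $D$, for the connectivity reason just stated. With that observation your argument goes through cleanly, and it coincides with the paper's.
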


\begin{proof}
  It is clear that, in the setting of \cref{locker}, $T(D_1) \circ \Psi_t(D_2) \circ T(D_3)$ is uniquely determined by $D$.  Indeed, $D$ is the partition diagram obtained from $T(D_1) \circ \Psi_t(D_2) \circ T(D_3)$ by replacing each pair $\uparrow \downarrow$ by a vertex and each strand by an edge.  Furthermore, the diagrams of the form $T(D_1) \circ \Psi_t(D_2) \circ T(D_3)$ are linearly independent by \cite[Prop.~5]{Kho14}.  The result then follows by a standard triangularity argument.
\end{proof}


\bibliographystyle{alphaurl}
\bibliography{DeligneHeisenberg}

\end{document}